\renewcommand{\@oddfoot}{%
  \hfil\thepage\hfil\llap{\footnotesize (\today)}}
\newcommand{\IR}{{\mathbb R}}
\newcommand{\IN}{\mathbb N}
\newcommand{\IZ}{\mathbb Z}
\newcommand{\C}{{\mathcal C}}
\newcommand{\E}{{\mathcal E}}
\newcommand{\intd}{\,\mathrm{d}}
\DeclareMathOperator{\e}{e}
\renewcommand{\i}{{\mathbf{i}}}
\newcommand{\abs}[1]{\mathopen|#1\mathclose|}
\newcommand{\norm}[1]{\mathopen\|#1\mathclose\|}
\newcommand{\intervaloo}[1]{\mathopen]#1\mathclose[}
\newcommand{\intervalco}[1]{\mathopen[#1\mathclose[}
\newcommand{\intervaloc}[1]{\mathopen]#1\mathclose]}
\newcommand{\intervalcc}[1]{\mathopen[#1\mathclose]}
\newcommand{\bigintervaloo}[1]{\bigl]#1\bigr[}
\newtheorem{Thm}{Theorem}
\newtheorem{Lem}[Thm]{Lemma}
\newtheorem{Prop}[Thm]{Proposition}
\theoremstyle{definition}
\theoremstyle{remark}
\newtheorem{Rem}[Thm]{Remark}
\definecolor{verydarkgreen}{rgb}{0.4,0.47,0.42}
\definecolor{darkblue}{rgb}{0.23,0.33,0.95}
\definecolor{darkred}{rgb}{0.67,0.03,0.06}
\definecolor{darkgreen}{RGB}{55,160,80}
\definecolor{fixme}{RGB}{224,15,182}
\definecolor{Chris}{RGB}{180,125,25}
\numberwithin{equation}{section}
\numberwithin{Thm}{section}
\numberwithin{Def}{section}
\begin{document}

\title{Nodal properties of eigenfunctions of a generalized buckling problem on balls}

\author{%
  Colette De Coster,
  Serge Nicaise
  \medskip\\
  Universit\'e de Valenciennes et du Hainaut Cambr\'esis\\
  LAMAV,  FR CNRS 2956, \\
  Institut des Sciences et Techniques de Valenciennes\\
  F-59313  Valenciennes Cedex 9, France\\
  $\{$Colette.DeCoster,Serge.Nicaise$\}$@univ-valenciennes.fr
	 \medskip\\
  Christophe Troestler\thanks{%
    This author was partially supported by
    the program ``Qualitative study of solutions of variational elliptic
    partial differerential equations. Symmetries, bifurcations,
    singularities, multiplicity and numerics'' (2.4.550.10.F)
    and the project ``Existence and asymptotic behavior of solutions
    to systems of semilinear elliptic partial differential equations''
    (T.1110.14)
    of the \emph{Fonds de la Recherche Fondamentale Collective},
    Belgium. }
	\medskip\\
	Institut de math\'ematique, 
	\\
	Universit\'e de Mons-Hainaut, 
	\\
	place du parc 20, 
	\\
	B-7000 Mons, Belgium
	\\
	Christophe.Troestler@umons.ac.be
	}

\maketitle

\begin{abstract}
  In this paper we are interested in the following fourth order
  eigenvalue problem coming from the buckling of thin films on liquid
  substrates:
  \begin{equation*}
    \begin{cases}
      \Delta^2 u+ \kappa^2 u=-\lambda \Delta u &\text{in } B_1,\\
      u=\partial_r u= 0 &\text{on } \partial B_1,
    \end{cases}
  \end{equation*}
  where $B_1$ is the unit ball in~$\IR^N$.
  When $\kappa > 0$ is small, we show that the first eigenvalue is
  simple and the first eigenfunction, which gives the shape of the
  film for small displacements, is positive.  However, when $\kappa$
  increases, we establish that the first eigenvalue is not always simple
  and the first eigenfunction may change sign.
  More precisely, for any $\kappa \in
  \intervaloo{0,+\infty}$, we give the exact multiplicity of the first
  eigenvalue and the number of nodal regions of the first eigenfunction.
\end{abstract}

\noindent
\textbf{Keywords}: fourth order problem, buckling, nodal properties of
eigenfunctions.

\noindent
\textbf{AMS Subject Classification}: 35K55, 35B65.

\section{Introduction}

This paper is motivated by the study of clamped thin elastic membranes
supported on a fluid substrate which can model
geological structures~\cite{Peletier},
biological organs (such as lungs, see~\cite{Zasadzinski}),
and water repellent surfaces.
A one-dimensional model of these films was given by Pocivavsek et
al.~\cite{pocivavsek} based on the principle that
the shape that the film takes must minimize the sum of the
elastic bending energy, measured by the curvature,
and the potential energy due to the vertical displacement
of the fluid column.
A detailed mathematical analysis of this problem was
performed in~\cite{wrinkling1D}.

Based on these ideas, a natural extension was proposed to higher
dimensions~\cite{wrinkling2D}.  More precisely let $\Omega$ be a
reference domain giving the shape of the film in the absence of
external forces and let $\Omega_\epsilon$ be a small compression of it
with $\Omega_\epsilon \to \Omega$ in some sense as $\epsilon \to 0$.
The shape of the film after the small compression is given by the
function $u_\epsilon : \Omega_\epsilon \to \IR$, giving the vertical
displacement of the film, which minimizes
\begin{equation*}
  \E_\epsilon :
  H^2_0(\Omega_\epsilon) \to \IR : v \mapsto
  \int_{\Omega_\epsilon} \abs{\Delta v}^2
  + \kappa^2 \int_{\Omega_\epsilon} v^2
\end{equation*}
under the constraint that the membrane can bend but not stretch,
thus that its total area does not change:
\begin{equation*}
  \int_{\Omega_\epsilon} \sqrt{1 + \abs{\nabla v}^2}
  = \abs{\Omega}.
\end{equation*}
The first term of $\E_\epsilon$ is the bending energy of the film, the
second accounts for the potential energy coming from the vertical
fluid displacement, and $\kappa$ is a constant expressing the relative
strength of these two energies.  It has been shown~\cite{wrinkling2D}
that, as $\epsilon \to 0$, minimizers $u_\epsilon$ of $\E_\epsilon$
behave like  $u_0$ where $u_0 \in
H^2(\Omega)\setminus\{0\}$ satisfies
\begin{equation}
  \label{eq:general problem}
  \begin{cases}
    \Delta^2 u+ \kappa^2 u = -\lambda_1 \Delta u &\text{in } \Omega,\\
    u = \frac{\partial u}{\partial \nu} = 0 &\text{on } \partial \Omega.
  \end{cases}
\end{equation}
Here $\Delta^2 u := \Delta(\Delta u)$
and $\lambda_1$ is the first buckling eigenvalue of $\Delta^2 +
\kappa^2$, namely
\begin{equation*}
  \lambda_1 :=
  \min_{u \in H^2_0(\Omega),\ \norm{\nabla u}_{L^2(\Omega)} = 1}
  \Bigl( \int_\Omega \abs{\Delta u}^2 + \kappa^2 \int_\Omega u^2
  \Bigr) .
\end{equation*}
As usual, we write $H^2_0(\Omega)$ for the
set of functions $u \in H^2(\Omega)$ that satisfy the clamped boundary
conditions $u = \frac{\partial u}{\partial \nu} = 0$ on $\partial \Omega$.
This first eigenvalue represents the minimal compression at which the
plate exhibits buckling (see \cite{KLV}).
The corresponding eigenfunction gives the shape of the membrane
when the compression is small.

In this work, we study the evolution of the spectrum with respect to
$\kappa\geq 0$ when $\Omega = B_1$ is the unit ball of~$\IR^N$.
More precisely, we determine values of
$\lambda$  and the shape of $u\ne 0$ satisfying the problem:
\begin{equation}
  \label{eq:pbm}
  \begin{cases}
    \Delta^2 u+ \kappa^2 u=-\lambda \Delta u &\text{in } B_1,\\
    u=\partial_r u= 0 &\text{on } \partial B_1.
  \end{cases}
\end{equation}
A special attention is devoted to the shape and nodal properties of the
first eigenfunction.

\medbreak

There is a large literature on the study of the positivity and of the
change of sign of the first eigenfunction for the eigenvalue problem
\begin{equation*}
  \begin{cases}
    \Delta^2 u= \lambda u &\text{in } \Omega,\\
    u= \frac{\partial u}{\partial \nu} = 0 &\text{on } \partial \Omega,
  \end{cases}
\end{equation*}
or for the buckling eigenvalue problem
\begin{equation*}
  \begin{cases}
    \Delta^2 u=-\lambda \Delta u &\text{in } \Omega,\\
    u= \frac{\partial u}{\partial \nu} = 0 &\text{on } \partial \Omega,
  \end{cases}
\end{equation*}
for different shapes of the domain $\Omega$ (see for example
\cite{BDJM, Coffman, CD, CDS, Duffin, GGS, GS, GS2, KKM, Sweers,
  Wieners}).  Roughly, these papers say that, except for $\Omega$
close to a disk in a suitable sense, the first eigenfunction changes
sign. The only reference that we know where the authors consider the
``mixed''
problem \eqref{eq:general problem} are \cite{KLV}, where the authors obtain
asymptotic estimate on the first eigenvalue of~\eqref{eq:general
  problem}, and
\cite{Chasman, Chasman-these} where the author considers the equation
$\Delta^2 u -\tau \Delta u = \omega u$ on a ball
with ``free'' boundary
conditions, where $\tau > 0$ is fixed and the eigenvalues $\omega > 0$
are sought.
In these latter works, L.~Chasman gives the structure of 
eigenfunctions but does not give sign information on them as she is
meanly interested in an isoperimetric inequality.
Note also that $\tau$ and $\omega$ give coefficients
of $\Delta u$ and $u$ of opposite sign compared to our case.

The paper is organized as follows.
In Section~\ref{Sect1}, we explain how we will find solutions
to~\eqref{eq:pbm} despite the fact that the method of separation of
variables is not directly applicable because of the presence of
``cross terms'' when we apply $\Delta^2$ to a function of the type
$R(r) S(\theta)$.
Section~\ref{Sect2} will deal with the easy case $\kappa = 0$
for which the eigenvalues are explicitly given in terms of
positive roots $(j_{\nu,\ell})_{\ell=1}^\infty$
of $J_\nu$ for some~$\nu$.
Recall that $J_\nu$ denotes the Bessel function of the First Kind of
order~$\nu$.

In Section~\ref{Sect3} and~\ref{sec:nodal prop}, we deal with
$\kappa > 0$.  First we show (see Theorem~\ref{thm:alpha kl}) that,
for all $k \in \IN$, there exists an increasing sequence
$\alpha_{k,\ell} = \alpha_{k,\ell}(\kappa) > \sqrt{\kappa}$, $\ell \ge
1$, such that $\lambda_{k,\ell} := \alpha_{k,\ell}^2 +
{\kappa^2}/{\alpha_{k,\ell}^2}$ is an eigenvalue of~\eqref{eq:pbm}
with corresponding eigenfunctions of the form $R_{k,\ell}(r)\e^{\pm\i
  k\theta}$ where 
\begin{equation*}
  R_{k,\ell}(r)
  := c J_k(\alpha_{k,\ell} \, r)
  + d J_k\Bigl(\frac{\kappa}{\alpha_{k,\ell}} \, r\Bigr)
\end{equation*}
for some $(c,d) \ne (0,0)$ suitably chosen (depending on $\kappa$,
$k$, and $\ell$).
The spectrum of~\eqref{eq:pbm} is exactly
$\{ \lambda_{k,\ell} \mid k \in \IN,\ \ell \ge 1 \}$.
Its minimal value $\lambda_1 = \lambda_1(\kappa)$
correspond the the minimum of
$\{ \alpha_{k,\ell} \mid k \in \IN,\ \ell \ge 1 \}$.
Contrarily to the standard case of second order elliptic operators,
the minimum is not always given by the same $\alpha_{k,\ell}$
but, depending on $\kappa$, is  $\alpha_{0,1}$ or $\alpha_{1,1}$ (see
Figure~\ref{figure 1}).
The main results of Section~\ref{Sect3} (see Theorems~\ref{1st eigen}
and~\ref{1st eigen2}) precisely describe this behavior
depending on the value of~$\kappa$ and explicitly give the
corresponding eigenspace which may be of dimension greater than~$1$.

In Section~\ref{sec:nodal prop} we show that, even when $\lambda_1$ is
simple, the first eigenfunction may change sign and can even possess
an arbitrarily large number of nodal domains.
More precisely, we prove the following theorem (see
Figure~\ref{fig:phi1} for a graphical illustration).

\begin{Thm}
  \label{1st eigen nodal}
  Denote $R_{k,\ell}$ a function defined by equation~\eqref{eq R} with
  $(c,d)$ a non-trivial solution of~\eqref{eq:R bd cond} and $\alpha =
  \alpha_{k,\ell}$ with $\alpha_{k,\ell}$ given by
  Theorem~\ref{thm:alpha kl}.
  \begin{itemize}
  \item If $\kappa\in \intervalco{0,j_{0,1}j_{0,2}}$, the first
    eigenvalue is simple and is given by
    $\lambda_1(\kappa) = \alpha_{0,1}^2(\kappa) +
    \kappa^2 / \alpha_{0,1}^2(\kappa)$ and the eigenfunctions
    $\varphi_1$ are radial, one-signed and $\abs{\varphi_1}$ is
    decreasing with respect to $r$.

  \item If $\kappa\in \intervaloo{j_{1,n}j_{1,n+1},
      \,j_{0,n+1}j_{0,n+2}}$, for some $n\ge 1$, the first eigenvalue
    is simple and given by
    $\lambda_1(\kappa) = \alpha_{0,1}^2(\kappa) + \kappa^2 /
    \alpha_{0,1}^2(\kappa)$ and the eigenfunctions are radial and have
    $n+1$ nodal regions.

  \item If $\kappa \in \intervaloo{j_{0,n+1}j_{0,n+2},
      \,j_{1,n+1}j_{1,n+2}}$, for some $n\ge 0$, the first eigenvalue
    is given by $\lambda_1(\kappa) = \alpha_{1,1}^2(\kappa) + \kappa^2 /
    \alpha_{1,1}^2(\kappa)$ and the eigenfunctions $\varphi_1$ have
    the form
    \begin{equation*}
      R_{1,1}(r) (c_1\cos\theta + c_2\sin\theta),
      \qquad
      c_1, c_2 \in \IR.
    \end{equation*}
    Moreover the function $R_{1,1}$ has $n$ simple zeros
    in $\intervaloo{0,1}$,
    i.e., $\varphi_1$ has $2(n+1)$ nodal regions.
  \end{itemize}
\end{Thm}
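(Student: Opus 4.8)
The plan is to reduce the whole statement to counting the zeros in $\intervaloo{0,1}$ of a single radial profile $R_{k,1}$ with $k\in\{0,1\}$, and then to read off the nodal regions from those zeros. First I would fix the angular index. Since $t\mapsto t+\kappa^2/t$ is strictly increasing on $\intervaloo{\sqrt\kappa,+\infty}$ and every $\alpha_{k,\ell}>\sqrt\kappa$ by Theorem~\ref{thm:alpha kl}, minimizing $\lambda_{k,\ell}=\alpha_{k,\ell}^2+\kappa^2/\alpha_{k,\ell}^2$ over the spectrum is the same as minimizing $\alpha_{k,\ell}$. By Theorems~\ref{1st eigen} and~\ref{1st eigen2} this minimum is attained at $\alpha_{0,1}$ on the ranges of the first two bullets and at $\alpha_{1,1}$ on the range of the third, the switch occurring exactly at the products of Bessel zeros listed in the statement; the interlacing $j_{0,m}<j_{1,m}<j_{0,m+1}$ then forces the ordering $j_{0,1}j_{0,2}<j_{1,1}j_{1,2}<j_{0,2}j_{0,3}<\cdots$, so the three families of intervals cover $\intervaloo{0,+\infty}$ up to the threshold values. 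Hence $\varphi_1$ is either radial ($k=0$) or of the form $R_{1,1}(r)\,(c_1\cos\theta+c_2\sin\theta)$ ($k=1$), and everything is reduced to the radial factor.

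Next I would pass from radial zeros to nodal regions, which is elementary once the zeros are known to be simple. If $R_{k,1}$ has $m$ simple zeros $0<r_1<\dots<r_m<1$, then for $k=0$ the function $\varphi_1=R_{0,1}(\abs{x})$ has constant sign on each of the $m+1$ shells cut out by the spheres $\abs{x}=r_j$ and changes sign across each of them, giving $m+1$ nodal regions; for $k=1$ the additional factor $c_1\cos\theta+c_2\sin\theta$ vanishes on a single diameter (a hyperplane through the origin when $N>2$), splitting each shell into two pieces of opposite sign, giving $2(m+1)$ nodal regions. Thus the first two bullets reduce to showing that $R_{0,1}$ has $0$, resp.\ $n$, simple zeros in $\intervaloo{0,1}$, and the third to showing that $R_{1,1}$ has $n$ simple zeros there.

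The heart of the proof is therefore the zero count of $R_{k,1}$. Writing $\alpha=\alpha_{k,1}$ and $\beta=\kappa/\alpha<\alpha$ and solving~\eqref{eq:R bd cond}, one may normalize~\eqref{eq R}, away from the degenerate values of~$\kappa$, as
\begin{equation*}
  R_{k,1}(r)\ \propto\ J_k(\beta)\,J_k(\alpha r)-J_k(\alpha)\,J_k(\beta r).
\end{equation*}
The governing mechanism is that the slower term $J_k(\beta r)$ gains one zero in $\intervaloo{0,1}$ each time $\beta$ passes a Bessel zero. Concretely, the determinant defining~\eqref{eq:R bd cond} vanishes whenever $J_k(\alpha)=J_k(\beta)=0$, and the admissible configuration $\alpha=j_{k,m+1}$, $\beta=j_{k,m}$ corresponds precisely to $\kappa=\alpha\beta=j_{k,m}j_{k,m+1}$; as $\kappa$ crosses this value a zero of $R_{k,1}$ passes through $r=1$, so the number of interior zeros jumps by one there and is constant in between. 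Anchoring at $\kappa=0$, where by Section~\ref{Sect2} the profile $R_{0,1}$ is the positive radial first buckling eigenfunction with no interior zero, induction along the thresholds $j_{0,m}j_{0,m+1}$ gives $m=0$ below $j_{0,1}j_{0,2}$ and $m=n$ on the sub-range of the second bullet; the same induction along $j_{1,m}j_{1,m+1}$ gives $m=n$ zeros of $R_{1,1}$ on the range of the third bullet. Simplicity follows since a double interior zero of $R_{k,1}$, together with $R_{k,1}(1)=R_{k,1}'(1)=0$, would, through the factorization $(\Delta+\alpha^2)(\Delta+\beta^2)$ of the operator, over-determine the two-dimensional space of regular profiles and force $R_{k,1}\equiv0$. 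Finally, for the first bullet the monotonicity of $\abs{\varphi_1}$ follows, once $R_{0,1}$ is one-signed, by checking that $R_{0,1}'$ (a combination of $J_1(\alpha r)$ and $J_1(\beta r)$, as $J_0'=-J_1$) has no zero in $\intervaloo{0,1}$, so that with $R_{0,1}'(0)=R_{0,1}'(1)=0$ it keeps a constant sign there; this is again a Sturm comparison on the present $\kappa$-range.

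The step I expect to be the main obstacle is the rigorous bookkeeping in the previous paragraph: proving that at each threshold $j_{k,m}j_{k,m+1}$ exactly one zero crosses $r=1$ and that no interior zero is created or destroyed in between. This requires the monotone dependence of $\alpha_{k,1}(\kappa)$ from Theorem~\ref{thm:alpha kl} together with a uniform Sturm comparison of the cross-product $J_k(\beta)J_k(\alpha r)-J_k(\alpha)J_k(\beta r)$ against the zeros of $J_k(\alpha r)$ and $J_k(\beta r)$ as the degenerate configurations are approached.
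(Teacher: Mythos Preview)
Your reduction to counting the interior zeros of $R_{k,1}$ for $k\in\{0,1\}$ is correct and matches the paper's strategy; the passage from radial zeros to nodal regions is fine.  Where you diverge from the paper is in \emph{how} you count those zeros.  You propose a continuity argument in~$\kappa$: track zeros as $\kappa$ varies and show that exactly one crosses $r=1$ at each threshold $j_{k,m}j_{k,m+1}$.  The paper instead works entirely at \emph{fixed}~$\kappa$.  It first proves (Lemma~\ref{cserge25:02}) that $R_{k,1}$ alternates sign at the explicit points $j_{k,i}/\alpha_{k,1}$, $i=1,\dots,n-1$, using the monotonicity of $\kappa\mapsto\kappa/\alpha_{k,1}^2$ established in Lemma~\ref{lserge25:02}; this forces at least one zero in each of those $n-1$ subintervals.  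It then excludes extra zeros by the maximum principle: $u=R_{k,1}(r)\sin(k\theta)$ solves $-\Delta u-\alpha_{k,-1}^2u = c(\alpha_{k,1}^2-\alpha_{k,-1}^2)J_k(\alpha_{k,1}r)\sin(k\theta)$, and on each annular sector where the right-hand side is one-signed, $\alpha_{k,-1}^2$ lies below the first Dirichlet eigenvalue, so $R_{k,1}$ cannot change sign more than once (Lemma~\ref{Lem Ev1}).  A boundary analysis at $r=1$ via $\partial_r^2R_{k,1}(1)$ (Lemma~\ref{Lem Ev2}) handles the last subinterval.  Simplicity of each zero comes for free from Lemma~\ref{Lem Ev1}, which yields $R_{k,1}'\neq0$ at each root.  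The monotonicity of $\abs{\varphi_1}$ in the first bullet (Lemma~\ref{R01}) is also obtained by the maximum principle, applied to $v=\partial_rR_{0,1}$.

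Your continuity route is plausible but, as you yourself note, the bookkeeping is the real work and is not carried out.  In addition, your simplicity argument is not a proof: a double zero of $R_{k,1}$ at an interior $r^*$ gives two more linear conditions on the pair $(c,d)$, but the $2\times2$ determinant of those conditions at $r^*$ is exactly $F_k$ for the ball of radius $r^*$, and you have not excluded its vanishing.  The paper's fixed-$\kappa$ maximum-principle approach avoids both difficulties at once.
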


Information on the eigenspaces at the countably many $\kappa > 0$ not
considered in the previous theorem is also provided.  For these
$\kappa$, $\alpha_{0,1}(\kappa) = \alpha_{1,1}(\kappa)$ and the
eigenspaces have even larger dimensions (see Theorem~\ref{1st
  eigen2}).

For simplicity this paper is written for a two dimensional ball but,
in Section~\ref{sec:any dim}, we show how our results naturally extend
to any dimension.

\begin{figure}[ht]
  \begin{minipage}[b]{0.31\linewidth}
    \centering
    \includegraphics[width=\linewidth, bb=64 215 552 580]{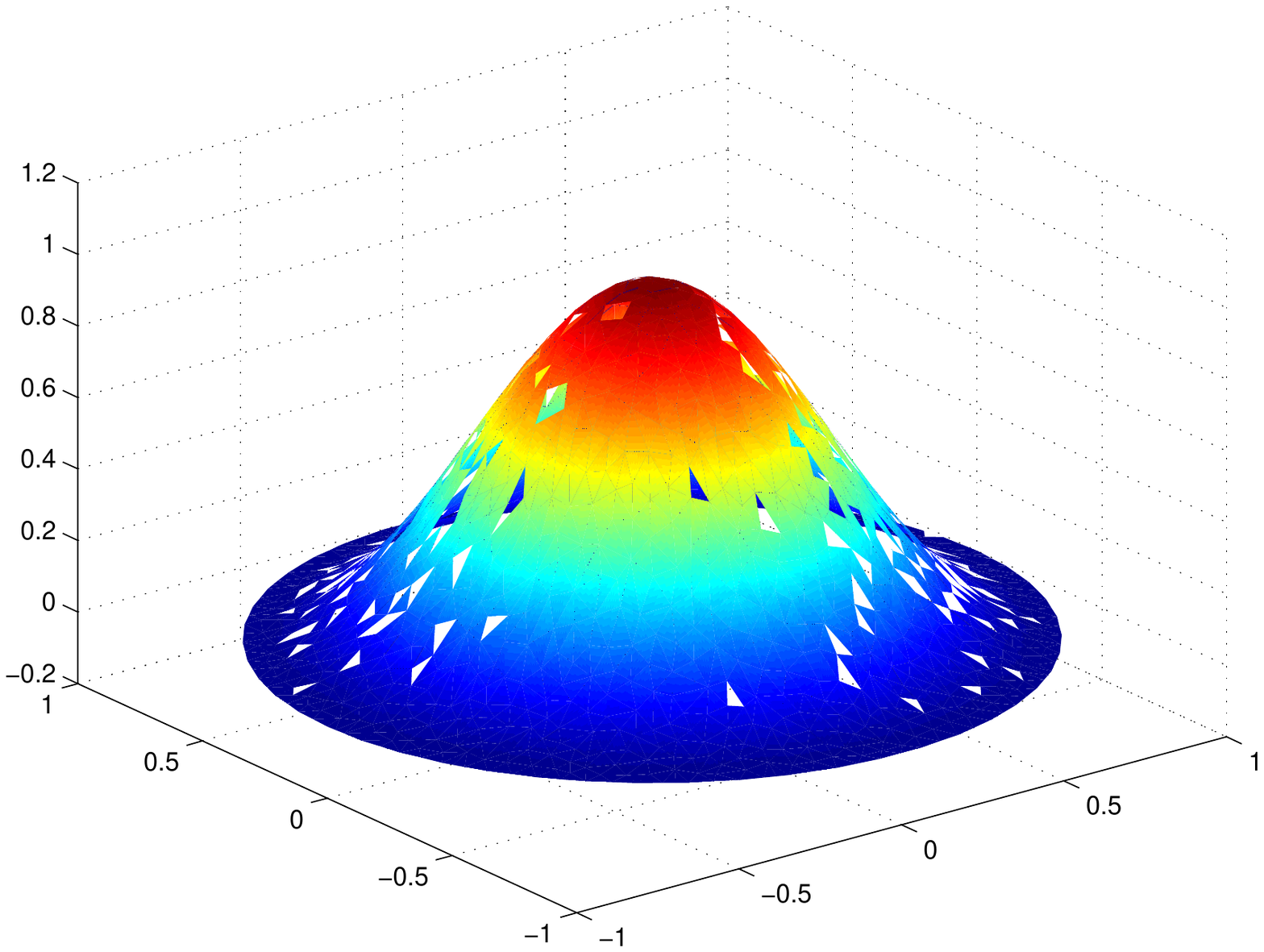}
    
    $\kappa\in \intervalco{0,j_{0,1}j_{0,2}}$
  \end{minipage}
  \hfill
  \begin{minipage}[b]{0.31\linewidth}
    \centering
    \includegraphics[width=\linewidth, bb=64 215 552 580]{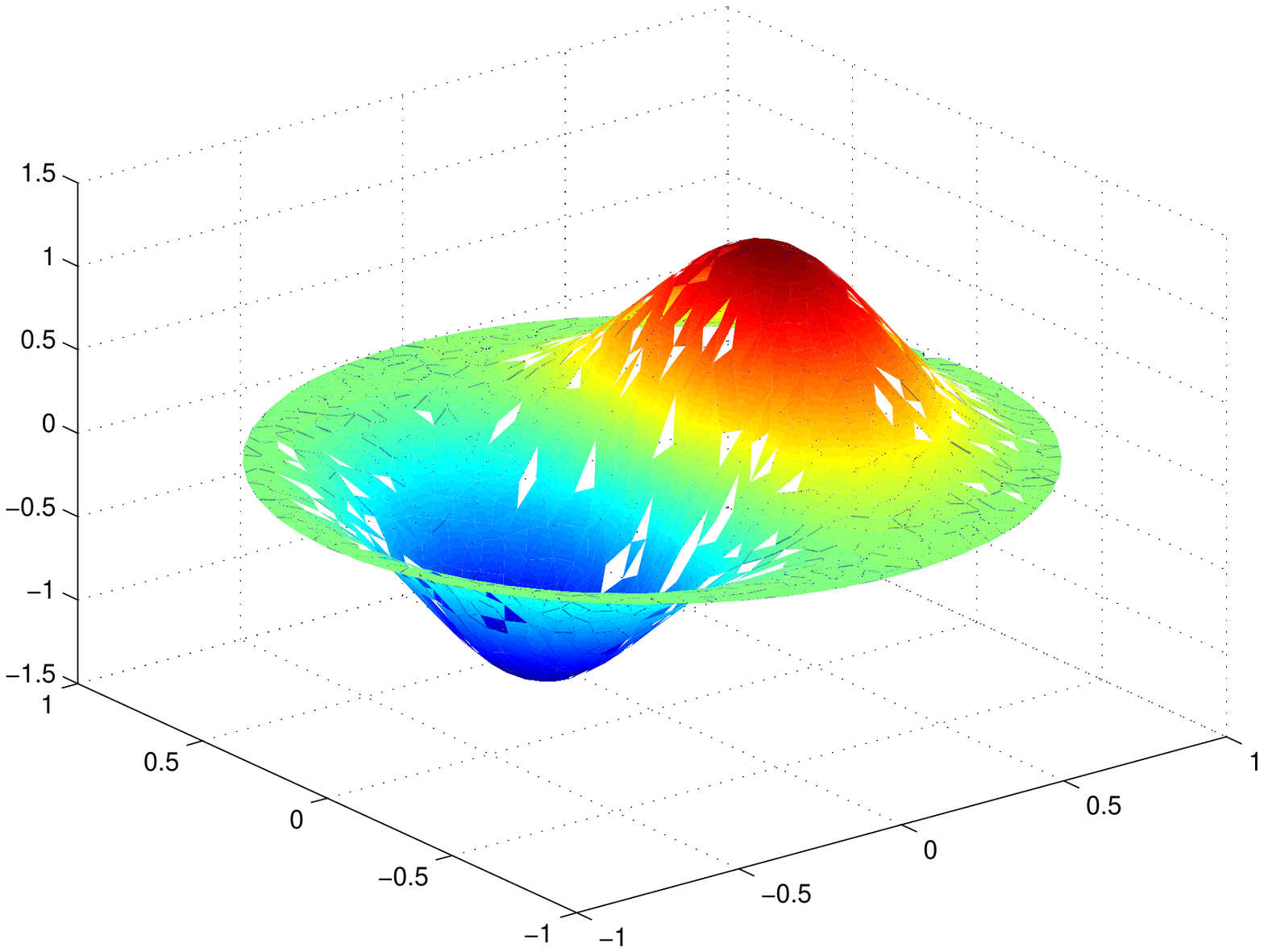}
    
    $\kappa\in \intervaloo{j_{0,1}j_{0,2},\, j_{1,1}j_{1,2}}$
  \end{minipage}
  \hfill
  \begin{minipage}[b]{0.31\linewidth}
    \centering
    \includegraphics[width=\linewidth, bb=64 214 552 580]{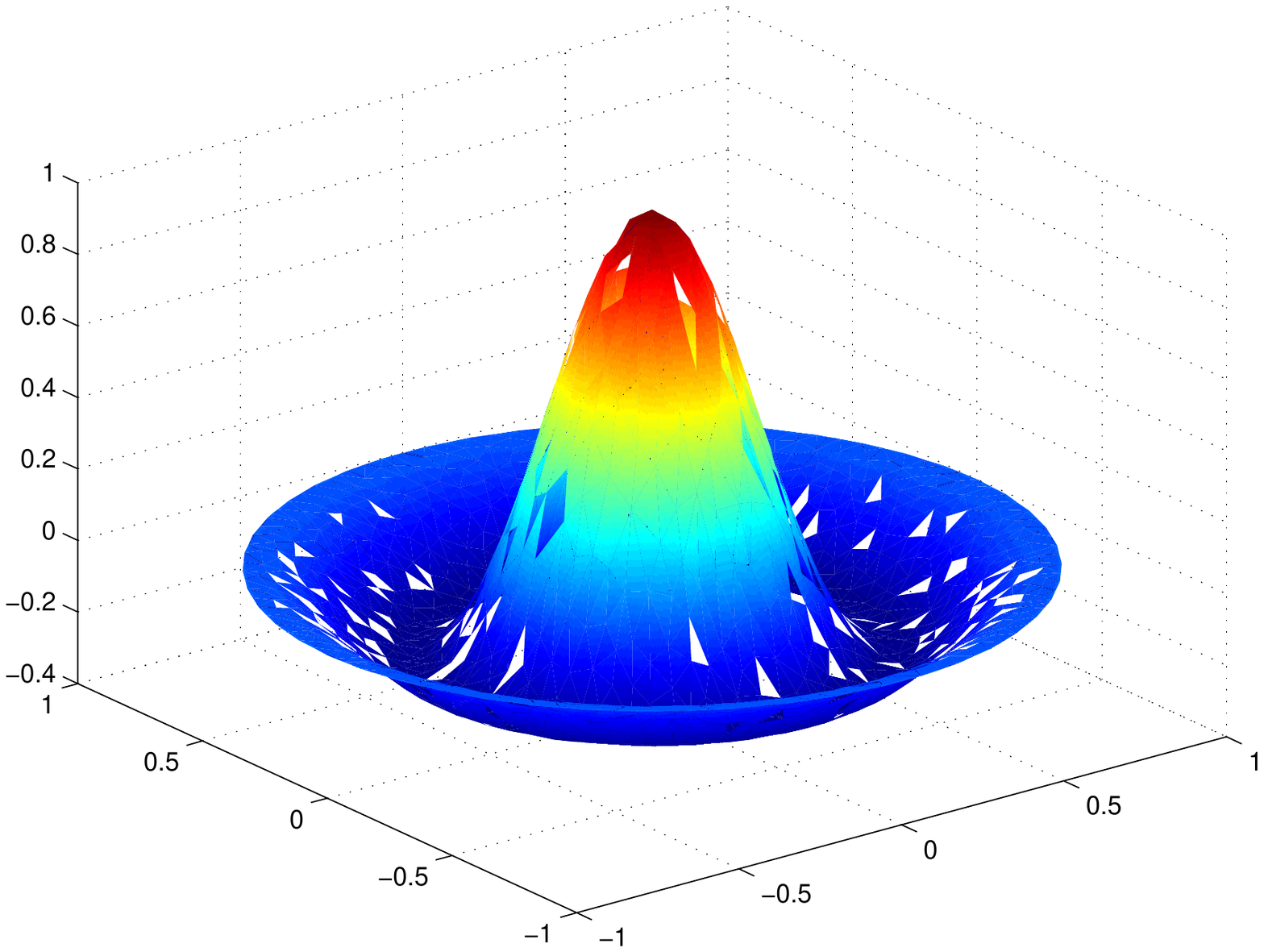}

    $\kappa\in \intervaloo{j_{1,1}j_{1,2},\, j_{0,2}j_{0,3}}$
  \end{minipage}
  \vspace{2ex}

  \begin{minipage}[b]{0.31\linewidth}
    \centering
    \includegraphics[width=\linewidth, bb=66 208 550 580]{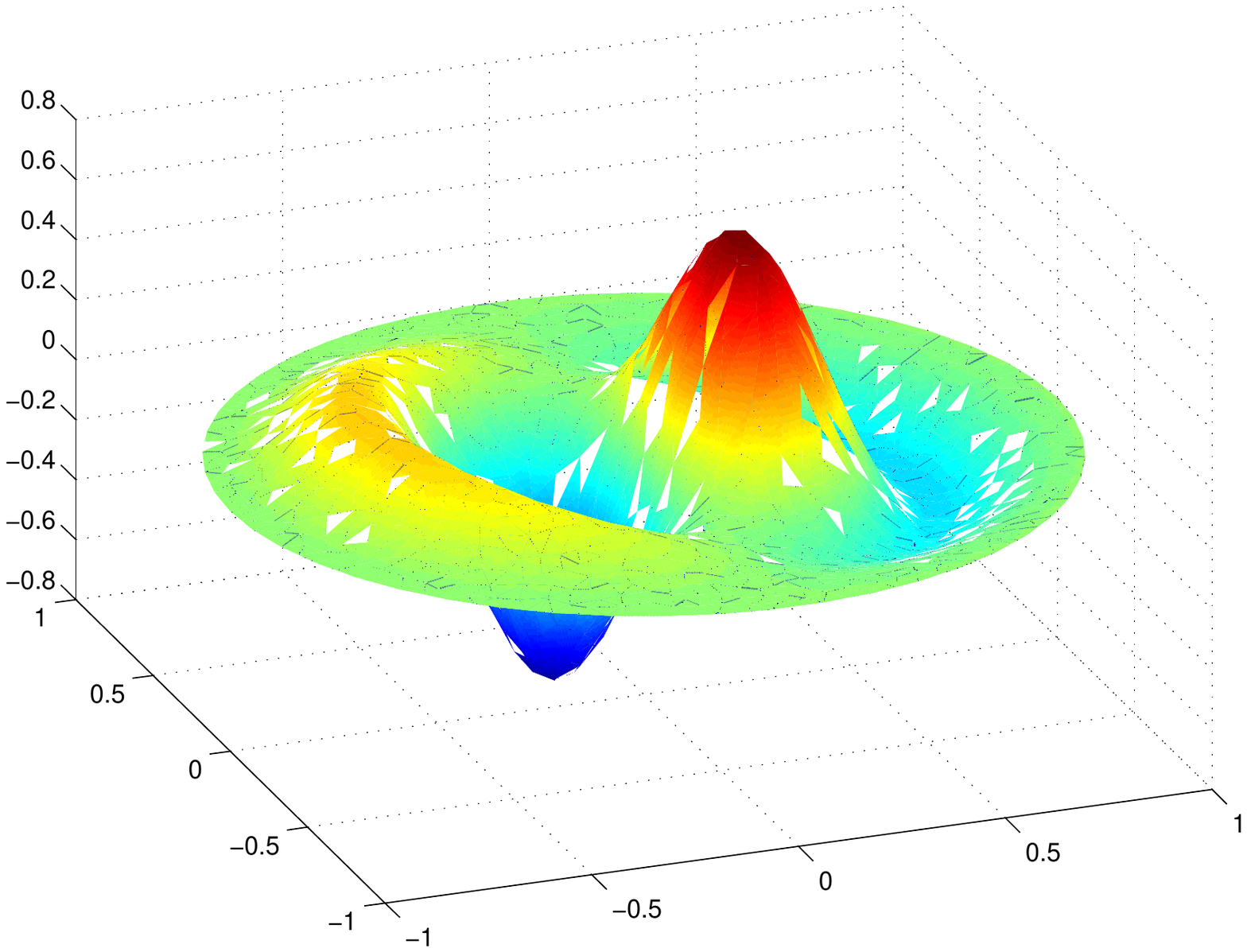}
  
    $\kappa \in \intervaloo{j_{0,2}j_{0,3},\, j_{1,2}j_{1,3}}$
  \end{minipage}
  \hfill
  \begin{minipage}[b]{0.31\linewidth}
    \centering
    \includegraphics[width=\linewidth, bb=68 206 548 580]{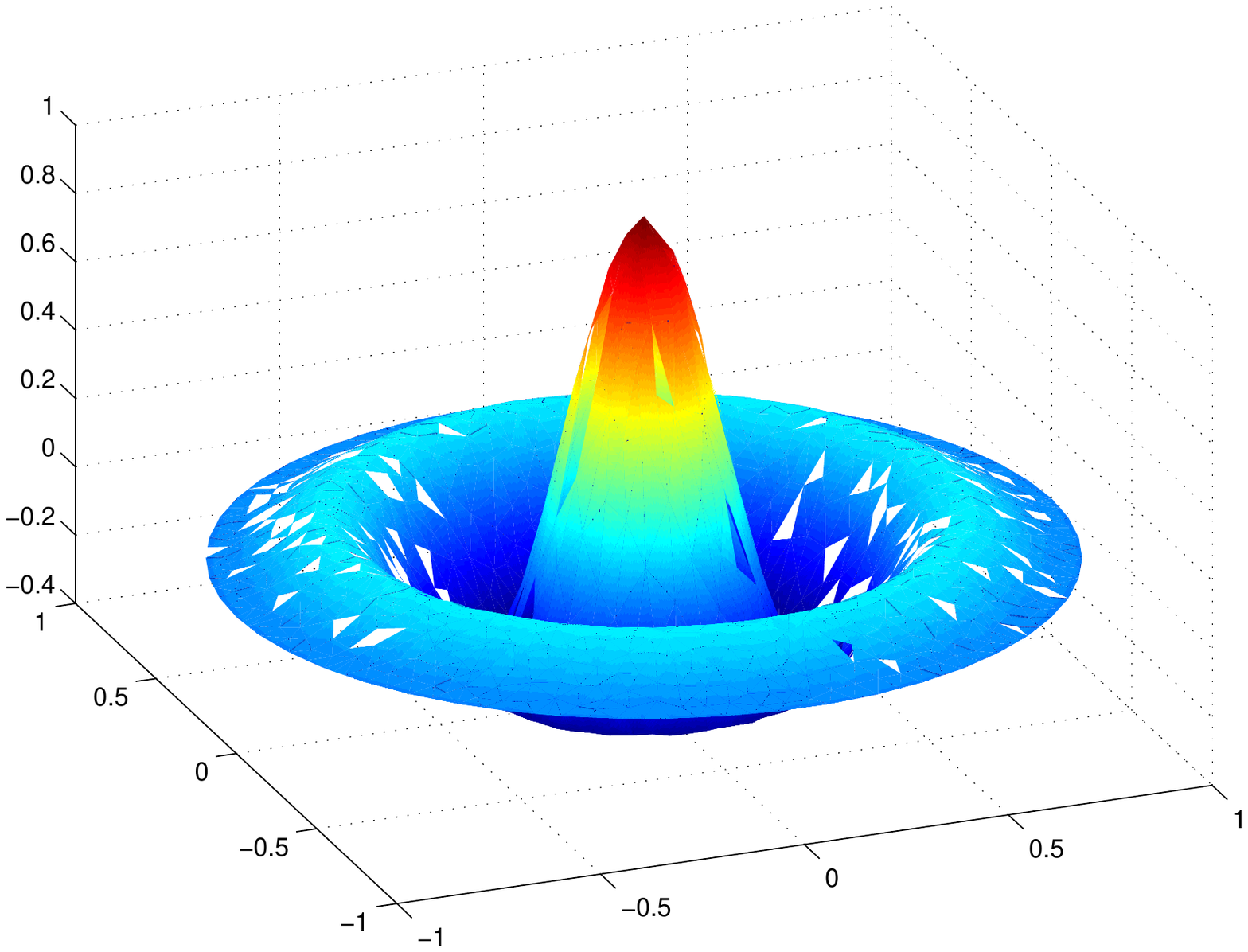}
    
    $\kappa\in \intervaloo{j_{1,2}j_{1,3},\, j_{0,3}j_{0,4}}$
  \end{minipage}
  \hfill
  \begin{minipage}[b]{0.31\linewidth}
    \centering
    \includegraphics[width=\linewidth, bb=70 205 546 589]{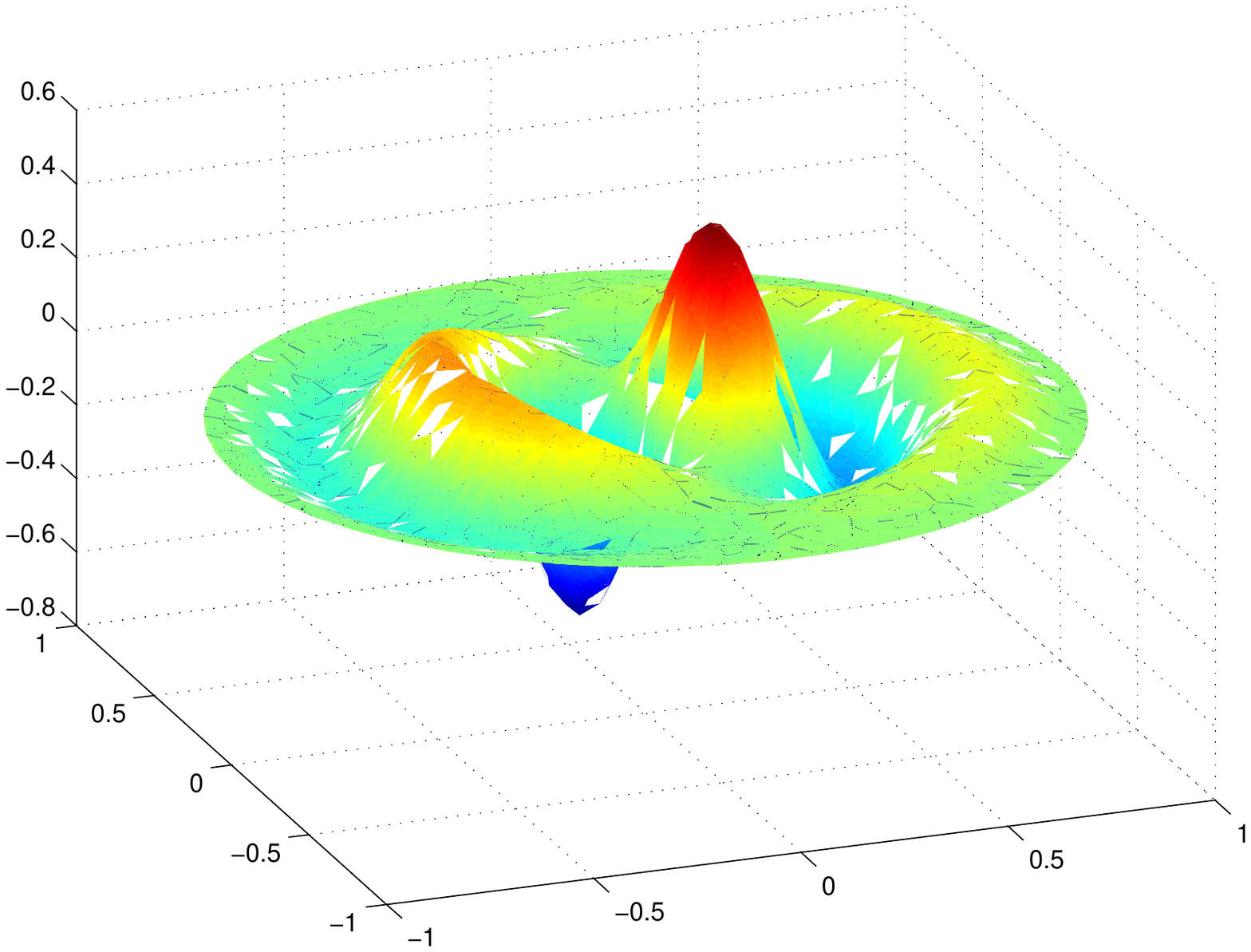}
    
    $\kappa\in \intervaloo{j_{0,3}j_{0,4},\, j_{1,3}j_{1,4}}$
  \end{minipage}

  \caption{Graphs of $\varphi_1$ for various values of $\kappa$.}
  \label{fig:phi1}
\end{figure}


In this paper, we use the following notations.  The set of natural
numbers is denoted $\IN=\{0,1,2,\ldots\}$, the set of positive
integers is $\IN^*=\{1,2,\ldots\}$, and $j_{\nu,\ell}$, $\ell \in
\IN^*$ denotes the $\ell$-th positive root of $J_\nu$, the Bessel
function of the First Kind of order~$\nu$.

\section{Preliminaries}
\label{Sect1}

Given two complex numbers $\alpha$, $\beta$, we look for special
solutions $u$ to the equation
\begin{equation}
  \label{EDP}
  (\Delta+\alpha^2) (\Delta+\beta^2)u=0.
\end{equation}
Such an equation is equivalent to
\begin{equation}\label{serge2}
  \Delta^2u + (\alpha^2+\beta^2)\Delta u + \alpha^2 \beta^2 u=0.
\end{equation}
Hence if we look for a solution to
\begin{equation}\label{serge3}
  \Delta^2u + \kappa^2 \, u = -\lambda \Delta u
\end{equation}
with $\kappa \ge 0$ fixed, it suffices to take $\alpha \beta = \kappa$ and
$\alpha^2+\beta^2 = \lambda$.

Given that we work in two dimensions,
we use the antsatz $u(r,\theta)=R(r) \e^{\i k\theta}$ with $k\in
\IZ$, where $(r,\theta)$ are the polar coordinates,
and notice that~\eqref{EDP} is equivalent to the fourth
order differential equation (in $\partial_r$)
\begin{equation}\label{serge1polar}
  L(\partial_r, r, \alpha,\beta, \abs{k}) R=0.
\end{equation}
We write $L(\partial_r, r, \alpha,\beta, \abs{k})$ to emphasize that the
coefficients of the differential operator depend continuously on $r$,
$\alpha$, $\beta$ and that the sign of $k$ does not matter.
Hence by the theory of ordinary
differential equations, $L$ has four linearly independent solutions.  To
find them it suffices to notice that
\begin{equation*}
  (\Delta+\alpha^2)u=0 \quad\Rightarrow\quad
  (\Delta+\alpha^2)(\Delta+\beta^2)u=0.
\end{equation*}
Thus if
\begin{equation}
  \label{serge4}
  (\Delta+\alpha^2)\bigl(R(r) \e^{\i k\theta}\bigr)=0
\end{equation}
then $R$ is a solution to~\eqref{serge1polar}.  But a solution
to~\eqref{serge4} is simpler to find. Indeed then $R$ satisfies the
Bessel equation
$$
(r\partial_r)^2 R+\alpha^2 r^2 R=k^2 R.
$$
Hence if $\alpha \ne 0$, $R$ is a linear combination of $J_{|k|}(\alpha
r)$ and of $Y_{|k|}(\alpha r)$.
On the contrary if $\alpha=0$ and $k \not=0$,
then $R$ is a linear combination of $r^k$ and of $r^{-k}$  while $R$ is a linear combination of $1$ and $\log r$ when $\alpha=0$ and $k=0$.
\medbreak

We have therefore proved the following result:

\begin{Lem}
\label{lserge1}
  Let $k\in \mathbb Z$.
  \begin{enumerate}
  \item 
	If $\alpha\ne\beta$ both non-zero, then the four linearly
    independent solutions to~\eqref{serge1polar} are $J_{|k|}(\alpha r)$,
    $Y_{|k|}(\alpha r)$, $J_{|k|}(\beta r)$, $Y_{|k|}(\beta r)$.
  \item 
	If $\alpha\ne0$ and $\beta=0$, then the four linearly independent
    solutions to~\eqref{serge1polar} are $J_{|k|}(\alpha r)$,
    $Y_{|k|}(\alpha r)$, $r^k$, $r^{-k}$ if $k \ne 0$
    and $J_{|k|}(\alpha r)$, $Y_{|k|}(\alpha r)$, $1$, $\log r$ if $k = 0$.
  \item 
    If $\alpha=\beta \ne 0$, then the four linearly independent
    solutions to~\eqref{serge1polar} are $J_{|k|}(\alpha r)$,
    $Y_{|k|}(\alpha r)$, $rJ'_{|k|}(\alpha r)$,
    $rY'_{|k|}(\alpha r)$.
  \end{enumerate}
\end{Lem}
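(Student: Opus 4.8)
The plan is to exploit the factorization of the radial operator already visible in \eqref{EDP}. Writing $u = R(r)\e^{\i k\theta}$, the identity $(\Delta+\gamma^2)\bigl(R\,\e^{\i k\theta}\bigr) = (\mathcal{L}_\gamma R)\,\e^{\i k\theta}$ defines, for each $\gamma$, a second-order radial operator $\mathcal{L}_\gamma = \mathcal{L}_0 + \gamma^2$, where $\mathcal{L}_0 R = R'' + \tfrac1r R' - \tfrac{k^2}{r^2}R$ is the radial part of $\Delta$ on the $k$-th angular mode. Then \eqref{serge1polar} reads $\mathcal{L}_\alpha \mathcal{L}_\beta R = 0$, and since $\mathcal{L}_\alpha$ and $\mathcal{L}_\beta$ differ only by the constant $\alpha^2 - \beta^2$ they commute. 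Hence every element of $\ker\mathcal{L}_\alpha$ and of $\ker\mathcal{L}_\beta$ solves \eqref{serge1polar}. As $L$ has order four, its solution space is four-dimensional, so in each case it suffices to exhibit four linearly independent solutions of this type (and, in the confluent case, to manufacture the two missing ones).

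For the first two cases this is immediate from the preliminary computation: $\ker\mathcal{L}_\gamma$ is the solution set of the Bessel equation $(r\partial_r)^2 R + \gamma^2 r^2 R = k^2 R$, spanned by $J_{\abs{k}}(\gamma r), Y_{\abs{k}}(\gamma r)$ when $\gamma\ne0$, and by $r^{\abs{k}}, r^{-\abs{k}}$ (resp. $1,\log r$ when $k=0$) when $\gamma = 0$. In case~(1) I collect the two pairs coming from $\mathcal{L}_\alpha$ and $\mathcal{L}_\beta$; in case~(2), the pair from $\mathcal{L}_\alpha$ together with $\ker\mathcal{L}_0$. Linear independence is then handled uniformly by applying $\mathcal{L}_\alpha$ to a vanishing linear combination: this annihilates the two $\alpha$-solutions and sends each $\beta$-solution $v$ to $(\alpha^2-\beta^2)v$ (in case~(2) it sends $r^{\pm\abs{k}}$ to $\alpha^2 r^{\pm\abs{k}}$). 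Since $\alpha^2\ne\beta^2$ (resp. $\alpha\ne0$), the surviving combination of the $\beta$-solutions must vanish, forcing their coefficients to be zero; the classical independence of $J_{\abs{k}}(\alpha r), Y_{\abs{k}}(\alpha r)$ (via their Wronskian) then kills the rest.

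The main work is case~(3), $\alpha=\beta\ne0$, where $L = \mathcal{L}_\alpha^2$ and $\ker\mathcal{L}_\alpha$ only supplies two solutions. Here I would produce the missing pair by differentiating the Bessel solutions in the spectral parameter. Differentiating $\mathcal{L}_\gamma J_{\abs{k}}(\gamma r) = 0$ with respect to $\gamma$ and using $\partial_\gamma\mathcal{L}_\gamma = 2\gamma$ gives $\mathcal{L}_\alpha\bigl(\partial_\gamma J_{\abs{k}}(\gamma r)\big|_{\gamma=\alpha}\bigr) = -2\alpha\, J_{\abs{k}}(\alpha r)$; since $\partial_\gamma J_{\abs{k}}(\gamma r)\big|_{\gamma=\alpha} = r J'_{\abs{k}}(\alpha r)$, a second application of $\mathcal{L}_\alpha$ shows $rJ'_{\abs{k}}(\alpha r)\in\ker\mathcal{L}_\alpha^2$, and likewise $rY'_{\abs{k}}(\alpha r)$. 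The same relation yields independence: applying $\mathcal{L}_\alpha$ to $aJ + bY + c\,rJ' + d\,rY' = 0$ leaves $-2\alpha\bigl(c J_{\abs{k}}(\alpha r) + d Y_{\abs{k}}(\alpha r)\bigr) = 0$, so $c=d=0$ and then $a=b=0$. I expect the only delicate point to be this confluent construction—justifying the parameter differentiation (harmless, as $J_{\abs{k}}$ and $Y_{\abs{k}}$ are analytic in $\gamma$) and pinning down the constant in $\mathcal{L}_\alpha\psi = -2\alpha\phi$—rather than anything in cases~(1)--(2).
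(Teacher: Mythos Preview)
Your argument is correct and follows essentially the same route as the paper. In both proofs, cases~(1) and~(2) are immediate from the factorization already set up in the preliminaries, and for case~(3) the paper produces the extra solutions by passing to the limit $\gamma\to\alpha$ in the difference quotient $\bigl(J_{\abs{k}}(\alpha r)-J_{\abs{k}}(\gamma r)\bigr)/(\alpha-\gamma)$, which is exactly your parameter-differentiation argument written in a slightly different form.

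The one place where you diverge is the treatment of linear independence: the paper simply appeals to the asymptotics of $J_{\abs{k}}$, $Y_{\abs{k}}$ and their derivatives at $r=0$, whereas you use the operator trick of applying $\mathcal{L}_\alpha$ to a vanishing combination. Your method is cleaner and in particular makes case~(3) fully transparent via the relation $\mathcal{L}_\alpha\bigl(rJ'_{\abs{k}}(\alpha r)\bigr)=-2\alpha J_{\abs{k}}(\alpha r)$; the paper does not spell out independence in that case at all. Note, incidentally, that in case~(1) your argument needs $\alpha^2\ne\beta^2$ rather than just $\alpha\ne\beta$; this is harmless in the paper's setting (where $\alpha,\beta$ end up positive), and the listed four functions are in fact linearly dependent when $\alpha=-\beta$, so the lemma itself tacitly excludes that situation.
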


\begin{proof}
  The first two points were already treated before, the linear
  independence coming easily from the asymptotic behavior of the
  Bessel functions and their derivatives at~$0$.
  For the third
  case, it suffices to notice that taking
  $\gamma\ne\alpha$, we see that
  $$
  \frac{J_{|k|}(\alpha r)-J_{|k|}(\gamma r)}{\alpha-\gamma}
  $$
  is a solution of
  $$
  L(\partial_r, r,\alpha,\gamma, \abs{k}) R=0.
  $$
  Letting $\gamma$ tend to $\alpha$, we prove that $R(r)=rJ'_{|k|}(\alpha r)$ is
	a solution of
  $$
	L(\partial_r, r,\alpha,\alpha, \abs{k}) R=0.
  $$
  The same argument holds for $Y_{\abs{k}}(\alpha r)$.
\end{proof}

\section{Eigenvalues in the case $\kappa=0$}
\label{Sect2}

Here we want to characterize the full spectrum of the buckling problem
with $\kappa = 0$ on the unit disk.  In other words, we look for a
non-trivial $u$ and $\lambda > 0$ such that
\begin{equation}\label{serge5}
  \begin{cases}
    \Delta^2 u=-\lambda \Delta u &\text{ in } D_1,\\
    u=\partial_r u= 0 &\text{ on } \partial D_1,
  \end{cases}
\end{equation}
where $D_1 = \bigl\{x\in \IR^2 \bigm| \abs{x} <1 \bigr\}$.
According to the previous section, we look for solutions $u$ in the form
\begin{equation*}
  u=R(r) \e^{\i k\theta}, \qquad
  \text{with } k\in \IZ,
\end{equation*}
to equation~\eqref{EDP} with $\alpha=\sqrt{\lambda}$ and $\beta=0$.
From Lemma \ref{lserge1}, we see that
$$
R(r)=c r^{|k|}+dJ_{|k|}(\alpha r),
$$
for some real numbers $c$ and $d$ (since $R$ and $R'$ are bounded around
$r=0$).  Hence the Dirichlet boundary conditions from \eqref{serge5}
yield
\begin{equation*}
  \begin{cases}
    c+d J_{|k|}(\alpha)=0,\\[1\jot]
    c |k|+d \alpha  J'_{|k|}(\alpha )=0.
  \end{cases}
\end{equation*}
This $2\times 2$ system has a non trivial solution if and only if its
determinant is zero, namely
\begin{equation}\label{D}
  \alpha  J'_{|k|}(\alpha)-|k|J_{|k|}(\alpha)=0.
\end{equation}
If a solution $\alpha$ exists (see Lemma~\ref{lserge2} below), then
$R$ has the form
\begin{equation*}
  R(r)=d \bigl(-J_{|k|}(\alpha )r^{|k|}+ J_{|k|}(\alpha r)\bigr),
\end{equation*}
for some $d\ne 0$.

\begin{Lem}
\label{lserge2}
  For all $k\in \IN$, there exists an increasing sequence
  $\alpha_{k,\ell}>0$, with $\ell\in {\IN}^{*}$, of solutions
  to~\eqref{D}. This sequence is formed by the positive zeros of~$J_{k+1}$.
\end{Lem}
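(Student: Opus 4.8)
The plan is to analyze the transcendental equation~\eqref{D}, namely $\alpha J'_{|k|}(\alpha) - |k| J_{|k|}(\alpha) = 0$, and identify its positive roots with the zeros of~$J_{k+1}$. The key tool is a standard Bessel function recurrence relation. Recall that the Bessel functions satisfy the contiguity identity
\begin{equation*}
  J_\nu'(z) = \frac{\nu}{z} J_\nu(z) - J_{\nu+1}(z),
\end{equation*}
or equivalently $z J_\nu'(z) - \nu J_\nu(z) = -z J_{\nu+1}(z)$. Applying this with $\nu = |k| = k$ (since $k \in \IN$) and $z = \alpha$, the left-hand side of~\eqref{D} becomes exactly $-\alpha J_{k+1}(\alpha)$. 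Thus equation~\eqref{D} reduces to $\alpha J_{k+1}(\alpha) = 0$, and since we seek $\alpha > 0$, this is equivalent to $J_{k+1}(\alpha) = 0$.

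From here the statement follows from the classical theory of Bessel functions. First I would recall that $J_{k+1}$, being the Bessel function of the First Kind of order $k+1 \ge 1$, has infinitely many positive zeros, which form a strictly increasing sequence tending to $+\infty$; I would simply cite this well-known fact. Denoting these zeros by $j_{k+1,\ell}$ for $\ell \in \IN^*$, I set $\alpha_{k,\ell} := j_{k+1,\ell}$. The increasing nature of the sequence $(\alpha_{k,\ell})_\ell$ is then immediate from the increasing nature of the zeros of $J_{k+1}$. This establishes both existence of the sequence and its identification with the positive zeros of~$J_{k+1}$.

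The main point requiring care is the reduction step via the recurrence relation, and in particular making sure the recurrence is applied with the correct order. Since $k \in \IN$ we have $|k| = k$, so no subtlety about the absolute value arises here, but I would state explicitly which recurrence is being used so the sign works out to give $J_{k+1}$ rather than $J_{k-1}$. There are two standard recurrences (one raising and one lowering the order), and only the lowering-to-raising form $z J_\nu' - \nu J_\nu = -z J_{\nu+1}$ yields the desired conclusion; the other would produce $J_{k-1}$ and an extra factor, which does not match. Once the correct identity is invoked, the rest is a direct appeal to the standard fact that the positive zeros of $J_{k+1}$ form an increasing sequence, so I anticipate no genuine obstacle beyond this bookkeeping.
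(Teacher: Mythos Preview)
Your proof is correct and follows essentially the same route as the paper: both use the recurrence $zJ_\nu'(z)-\nu J_\nu(z)=-zJ_{\nu+1}(z)$ (the paper's formula~\eqref{A:4}) to reduce~\eqref{D} to $J_{k+1}(\alpha)=0$, and then invoke the standard fact that the positive zeros of $J_{k+1}$ form an increasing sequence. Your write-up is slightly more explicit about the bookkeeping, but the argument is the same.
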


\begin{proof}
  For $k\geq 0$, we use the formula \eqref{A:4}
  to find that
  $$
  \alpha  J'_{k}(\alpha )-k   J_{k}(\alpha )=-\alpha J_{k+1}(\alpha ).
  $$
  Therefore $\alpha>0$ is a solution of \eqref{D} if and only if
  $J_{k+1}(\alpha )=0$.
\end{proof}

We are ready to state the following result.

\begin{Thm}
  \label{tserge2}
  The spectrum of the buckling problem with $\kappa =0$ is given by
  $\bigl\{ \lambda_{k,\ell} := j_{|k|+1,\ell}^2 \bigm| \ell\in {\IN}^{*},\
  k\in \IZ\bigr\}$.
  A basis of the eigenfunctions is given by
  \begin{equation*}
    (r,\theta) \mapsto
    -J_{0}(j_{1,\ell}) + J_{0}(j_{1,\ell} \, r)
  \end{equation*}
  giving rise to the eigenvalue $\lambda_{0,\ell}$ and
  \begin{equation*}
    (r,\theta) \mapsto
    \bigl(-J_{|k|}(j_{|k|+1,\ell})r^{|k|}
    + J_{|k|}(j_{|k|+1,\ell} \, r)\bigr)
    \e^{\i k\theta},
    \quad k \ne 0,
  \end{equation*}
  giving rise to the eigenvalue $\lambda_{k,\ell}$.
\end{Thm}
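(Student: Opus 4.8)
The plan is to combine the preliminary analysis of the boundary-value problem with Lemma~\ref{lserge2} to assemble the full spectrum. The work in Section~\ref{Sect2} leading up to the theorem already shows that, for each fixed $k\in\IZ$, a non-trivial solution $u=R(r)\e^{\i k\theta}$ to~\eqref{serge5} exists precisely when $\alpha=\sqrt\lambda$ satisfies the characteristic equation~\eqref{D}, and that the admissible radial parts are then $R(r)=d\bigl(-J_{|k|}(\alpha)r^{|k|}+J_{|k|}(\alpha r)\bigr)$ up to scaling. So the proof reduces to three tasks: identify all admissible $\alpha$, translate them into eigenvalues $\lambda$, and check that the listed eigenfunctions form a basis of the corresponding eigenspaces.

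First I would invoke Lemma~\ref{lserge2}, which states that for each $k\ge 0$ the positive solutions of~\eqref{D} are exactly the positive zeros of $J_{|k|+1}$, i.e.\ the numbers $\alpha_{k,\ell}=j_{|k|+1,\ell}$ for $\ell\in\IN^*$. Since the equation~\eqref{serge1polar}, and hence~\eqref{D}, depends on $k$ only through $|k|$, the same set of $\alpha$-values serves both $k$ and $-k$. Setting $\lambda=\alpha^2$ then gives the eigenvalues $\lambda_{k,\ell}=\alpha_{k,\ell}^2=j_{|k|+1,\ell}^2$, which is exactly the claimed spectrum $\bigl\{j_{|k|+1,\ell}^2 \bigm| \ell\in\IN^*,\ k\in\IZ\bigr\}$. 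For each such $\alpha$ the radial factor is determined up to a constant, so substituting $\alpha=j_{|k|+1,\ell}$ into the formula for $R$ and normalizing the constant $d$ produces the eigenfunctions displayed in the statement, with the $k=0$ case simplifying because $r^{|k|}=1$.

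Next I would argue that these are \emph{all} the eigenfunctions and that the list is a basis. Completeness follows because any solution $u$ of~\eqref{serge5} can be expanded in the angular modes $\e^{\i k\theta}$, and Section~\ref{Sect1} (via Lemma~\ref{lserge1} with $\beta=0$) shows that within each mode the only solutions bounded at the origin and satisfying the clamped conditions are the ones we have described; so no eigenvalue or eigenfunction is missed. The functions $\{\e^{\i k\theta}\}_{k\in\IZ}$ being orthogonal in $L^2$ of the angular variable, eigenfunctions attached to different $k$ are linearly independent, and for fixed $k$ distinct $\ell$ give distinct $\alpha=j_{|k|+1,\ell}$ and hence genuinely different radial profiles, so linear independence holds across the whole family.

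The main obstacle I anticipate is purely bookkeeping rather than analytic: one must be careful that the parameter $k$ ranges over all of $\IZ$ while the Bessel-zero data depend only on $|k|$, so a single eigenvalue $\lambda_{k,\ell}=j_{|k|+1,\ell}^2$ is in fact shared by the two modes $\e^{\pm\i k\theta}$ when $k\ne 0$. Keeping this degeneracy straight, and confirming that the stated functions indeed exhaust the eigenspace (rather than merely spanning a proper subspace of it), is where care is needed; everything else is a direct consequence of Lemma~\ref{lserge2} and the explicit form of $R$ already derived.
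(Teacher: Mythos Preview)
Your proposal is correct and follows essentially the same approach as the paper's proof: identify the separated solutions via Lemma~\ref{lserge2}, then argue completeness by expanding an arbitrary eigenfunction in the angular basis $(\e^{\i k\theta})_{k\in\IZ}$ and applying Lemma~\ref{lserge1} mode by mode. The paper makes the completeness step slightly more explicit---it writes $u_k(r)=\int_0^{2\pi}u(r,\theta)\e^{-\i k\theta}\,\mathrm{d}\theta$ and integrates the PDE against $\e^{-\i k\theta}$ to verify directly that each $u_k$ satisfies the radial equation~\eqref{serge6}---but this is exactly the mechanism underlying your phrase ``can be expanded in the angular modes,'' so the arguments coincide.
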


\begin{proof}
  We have already showed that all $j_{|k|+1,\ell}^2$ are
  eigenvalues of the operator with the corresponding eigenvectors. It
  then remains to prove that we have found all eigenvalues.  The reason
  comes essentially from the fact that the functions
  $(\e^{\i k\theta})_{k\in \IZ}$ form an orthonormal basis of
  $L^2\bigl(\intervaloo{0,2\pi}\bigr)$.
  Indeed let $(u,\lambda)$ be a solution to~\eqref{serge5}.
  We write
  $$
  u=\sum_{k\in \IZ} u_k(r) \e^{\i k\theta},
  $$
  with
  $$
  \forall r>0,\quad
  u_k(r)=\int_0^{2\pi}  u(r,\theta) \e^{-\i k\theta} \intd\theta
  $$
  (this integral makes sense because $u$ is smooth).
  Now we check that
  \begin{equation}\label{serge6}
    \forall k\in \IZ,\quad
    L(\partial_r, r, \sqrt\lambda, 0, \abs{k}) u_k=0.
  \end{equation}
  Indeed using the differential equation in \eqref{serge5}, we see that
  \begin{equation*}
    \forall r>0,\quad
    0=\int_0^{2\pi} (\Delta^2+\lambda \Delta)
    u(r,\theta) \cdot \e^{-\i k\theta} \intd\theta.
  \end{equation*}
  Writing the operator $\Delta$ and $\Delta^2$ in polar coordinates
  and integrating by parts in $\theta$, we see that the previous
  identity is equivalent to \eqref{serge6}. At this stage we use
  point~2 of Lemma~\ref{lserge1} to deduce that $u_k$ is a linear
  combination of $r^{|k|}$ and of $J_{|k|}(\sqrt{\lambda} \, r)$ (due to the
  regularity of $u_k$ at $r=0$). 
	
	We therefore deduce that $\lambda$
  has to be a root of \eqref{D} and hence $\lambda=\lambda_{k,\ell}$,
  for some $k\in \mathbb Z$ and $\ell\in \IN^{*}$,
  and $u$ is a linear combination of the eigenfunctions
  given in the statement of this Theorem.
\end{proof}

\section{Eigenvalues in the case $\kappa >0$}
\label{Sect3}

In this section we characterize the eigenfunctions of the buckling
problem with $\kappa >0$ on the unit disk~$D_1$.  In other words, we
look for a non-trivial $u$ and $\lambda>0$ solving~\eqref{eq:pbm}.

First observe that $\lambda_1 \ge 2\kappa$.
As $\int_{D_1}(\Delta u + \kappa u)^2\ge 0$, we have
$$
\int_{D_1}(|\Delta u|^2+\kappa^2 u^2)\geq - 2 \kappa \int_{D_1} \Delta u \, u = 2 \kappa \int_{D_1} |\nabla u|^2.
$$
This implies that
$$
\lambda_1 =
\inf_{u\in H^2_0(\Omega)\setminus\{0\}} \frac{\int_{D_1}(|\Delta u|^2+\kappa^2 u^2)}{
  \int_{D_1} |\nabla u|^2}\geq 2\kappa.
$$
As a consequence, we can write~\eqref{eq:pbm} under the form
\eqref{EDP} with $\alpha$ and $\beta$ positive real numbers
satisfying $\alpha\beta=\kappa$ and $\alpha^2+\beta^2=\lambda$.
Following the same strategy as before, we look for solutions
$u=R(r) \e^{\i k\theta}$ with $k\in \IZ$.  Again due to the regularity of
$u$ at zero and eliminating $\beta={\kappa}/{\alpha}$, we deduce that,
if $\alpha\ne {\kappa}/{\alpha}$,
$R$ is in the form
\begin{equation}
  \label{eq R} 
  R(r) = c J_{|k|}(\alpha r)
  + dJ_{|k|}\Bigl(\frac{\kappa}{\alpha} r \Bigr),
\end{equation}
for some $c,d\in \IR$.  If instead
$\alpha = {\kappa}/{\alpha}$ (i.e., $\alpha = \sqrt{\kappa}$),
\begin{equation}
  \label{eq:R:2}
  R(r)=c J_{|k|}(\sqrt{\kappa} r)
  + d rJ'_{|k|}(\sqrt{\kappa} r),
\end{equation}
with $c,d\in \IR$.

\begin{Lem}
  \label{cserge1}%
  Let $k \in \IN$.  
	
	\begin{enumerate}
	
	\item
	The function
  \begin{math}
    \displaystyle
    \tilde H_k :
    \intervaloo{0, +\infty} \to \IR
  \end{math}
	defined by
	\begin{equation}
    \label{eq:tilde H}
    \tilde H_k(z) := (z^2 - k^2) \bigl(J_k(z)\bigr)^2
    + z^2 \bigl(J_k'(z)\bigr)^2
  \end{equation}
  is positive and increasing.  
  \item
    The function
    \begin{equation*}
      H_k :
      \intervaloo{0, +\infty}\setminus
      \bigl\{j_{k,\ell} \bigm| \ell\in \IN^{*}\bigr\} \to \IR :
      z \mapsto \frac{z J'_k(z)}{J_k(z)}
    \end{equation*}
    has a negative derivative 
  \begin{equation}
    \label{eq:tilde Hbis}
    H'_k(z) = \frac{-\tilde H_k(z)}{z J_k^2(z)}
  \end{equation}
  and thus
  is decreasing between any two consecutive roots of $J_k$.
  Moreover, for any $\ell \ge 1$, 
    \begin{equation*}
      \lim_{z \xrightarrow{>} j_{k,\ell}\hspace{-2ex}} H_k(z) = +\infty
      \qquad \text{and} \qquad
      \lim_{z \xrightarrow{<} j_{k,\ell+1}\hspace{-3.7ex}}
      H_k(z) = -\infty.
  \end{equation*}
  \end{enumerate}
\end{Lem}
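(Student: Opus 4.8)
The plan is to handle both parts with a single mechanism coming from Bessel's equation, which in the relevant normalization reads $z^2 J_k''(z) + z J_k'(z) + (z^2 - k^2) J_k(z) = 0$. The point that unifies the two statements is that after differentiating $\tilde H_k$ and $H_k$ and substituting this equation, everything collapses to $\tilde H_k$ itself: the derivative of $\tilde H_k$ becomes a manifestly positive expression, and the numerator produced when differentiating $H_k$ is exactly $-\tilde H_k$. So I would prove part~1 first and then feed it into part~2.

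For part~1, I would differentiate \eqref{eq:tilde H} directly,
\[
  \tilde H_k'(z) = 2z J_k^2 + 2(z^2-k^2)J_k J_k' + 2z (J_k')^2 + 2z^2 J_k' J_k'',
\]
and then use Bessel's equation in the form $z^2 J_k'' = -z J_k' - (z^2-k^2)J_k$ to replace the last term. The two cross terms in $J_k J_k'$ and the terms $\pm 2z(J_k')^2$ then cancel, leaving the clean identity $\tilde H_k'(z) = 2z J_k^2(z)$. This is $\ge 0$ and vanishes only at the isolated zeros of $J_k$, so $\tilde H_k$ is strictly increasing. Positivity follows from the behaviour at the origin: since $J_k$ and $J_k'$ are bounded near $0$ and $J_k(0)=0$ for $k\ge1$ (while the $z^2$ factors absorb the remaining contributions when $k=0$), one gets $\lim_{z\to 0^+}\tilde H_k(z)=0$, and hence $\tilde H_k(z)=\int_0^z 2t J_k^2(t)\intd t>0$ for every $z>0$.

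For part~2 I would compute $H_k'$ by the quotient rule, getting $H_k'=(J_k J_k' + z J_k J_k'' - z(J_k')^2)/J_k^2$, and substitute $z J_k'' = -J_k' - (z^2-k^2)J_k/z$. The $J_k J_k'$ terms cancel and what remains is precisely $-[(z^2-k^2)J_k^2 + z^2 (J_k')^2]/(z J_k^2) = -\tilde H_k(z)/(z J_k^2(z))$, which is \eqref{eq:tilde Hbis}; by part~1 this is strictly negative wherever $J_k(z)\ne0$, so $H_k$ is decreasing on each interval between consecutive roots. For the limits I would use that the zeros $j_{k,\ell}$ are simple (otherwise $J_k$ and $J_k'$ would vanish together, forcing $J_k\equiv0$), so $J_k'(j_{k,\ell})\ne0$; tracking signs, on an interval where $J_k>0$ one has $J_k'>0$ at the left endpoint and $J_k'<0$ at the right endpoint (and symmetrically when $J_k<0$), which yields $H_k\to+\infty$ as $z\to j_{k,\ell}^+$ and $H_k\to-\infty$ as $z\to j_{k,\ell+1}^-$.

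The two derivative computations are routine algebra once Bessel's equation is plugged in; the only genuinely delicate point is the positivity in part~1, because for $k\ge1$ the factor $(z^2-k^2)$ is negative on $\intervaloo{0,k}$, so positivity is not readable from \eqref{eq:tilde H} itself. This is exactly what the monotonicity identity $\tilde H_k'=2zJ_k^2$, combined with the vanishing at the origin, is meant to overcome, and I would state the limit at $0$ carefully, separating the immediate case $k=0$ from $k\ge1$.
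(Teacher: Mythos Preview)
Your proposal is correct and follows essentially the same route as the paper: differentiate $\tilde H_k$ and $H_k$, plug in Bessel's equation~\eqref{A:6} to obtain $\tilde H_k'(z)=2zJ_k^2(z)$ and $H_k'(z)=-\tilde H_k(z)/(zJ_k^2(z))$, and deduce positivity of $\tilde H_k$ from $\tilde H_k(0^+)=0$. The only cosmetic difference is in the limits: the paper infers the signs of the one-sided limits from the monotonicity of $H_k$ together with the simplicity of the zeros of $J_k$, whereas you track the sign of $J_k'$ directly at each endpoint; both arguments are equivalent.
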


\begin{proof}
  Let $\tilde H_k$ be defined by \eqref{eq:tilde H}. 
	 Differentiating $\tilde H_k$ and using  the equation
  satisfied by Bessel functions \eqref{A:6} gives $\tilde H_k'(z) = 2 z J_k^2(z)$
  which is positive for all $z > 0$ except at the (isolated)
  roots of $J_k$. Since  $\tilde H_k(0) = 0$,
  this proves the result concerning $\tilde H_k$. 

	Using again the differential equation satisfied by Bessel
  functions \eqref{A:6}, one easily gets \eqref{eq:tilde Hbis}.
  Since $\tilde H_k>0$, 
  the function $H_k$ decreases between two consecutive roots of $J_k$.
  Hence the limits are easy to compute once
  one remarks that the numerator of $H_k(z)$ does not vanish at $z =
  j_{k,m}$ for any~$m$ because the positive roots of $J_k$ are simple.
\end{proof}

\begin{Prop}
  \label{eigenfunctions}
  The eigenfunctions of the differential equation~\eqref{eq:pbm} are
  of the form $u=R(r) \e^{\i k\theta}$ with $k\in \IZ$ and $R$ given by
  \eqref{eq R}, where
  $\alpha \ne \sqrt{\kappa}$ and $\alpha$ is a positive solution of
  \begin{equation}\label{Dbis}
    F_k(\alpha) :=
    \frac{\kappa}{\alpha} J_{|k|}(\alpha)
    J'_{|k|} \Bigl(\frac{\kappa}{\alpha} \Bigr)
    - \alpha J_{|k|}\Bigl(\frac{\kappa}{\alpha}\Bigr)  J'_{|k|}(\alpha )
    =0.
  \end{equation}
  The corresponding eigenvalue is $\lambda = \alpha^2 +
  {\kappa^2}/{\alpha^2}$.
\end{Prop}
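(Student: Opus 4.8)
The plan is to identify all eigenfunctions by combining a Fourier decomposition in the angular variable (exactly as in the proof of Theorem~\ref{tserge2}) with the clamped boundary conditions. First I would take an arbitrary eigenpair $(u,\lambda)$ of~\eqref{eq:pbm} and expand $u = \sum_{k\in\IZ} u_k(r)\e^{\i k\theta}$ with $u_k(r) = \int_0^{2\pi} u(r,\theta)\e^{-\i k\theta}\intd\theta$. The same Rayleigh-quotient computation that gave $\lambda_1 \ge 2\kappa$ applies to any eigenfunction, so every eigenvalue satisfies $\lambda \ge 2\kappa$; this lets us factor the equation as in~\eqref{EDP} with $\alpha,\beta > 0$ real, $\alpha\beta = \kappa$ and $\alpha^2+\beta^2 = \lambda$. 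Integrating $(\Delta^2 + \lambda\Delta + \kappa^2)u = 0$ against $\e^{-\i k\theta}$ and integrating by parts in~$\theta$, exactly as for the case $\kappa = 0$, shows that each mode obeys $L(\partial_r, r, \alpha, \beta, \abs{k})\,u_k = 0$.

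Next I would exclude the degenerate case $\alpha = \beta$, equivalently $\alpha = \sqrt{\kappa}$, equivalently $\lambda = 2\kappa$, so that the basis~\eqref{eq:R:2} never arises for a genuine eigenfunction. The inequality above comes from $\int_{D_1}(\Delta u + \kappa u)^2 \ge 0$, and equality (that is, $\lambda = 2\kappa$) forces $\Delta u + \kappa u = 0$ in $D_1$. But then $u$ would be a nonzero solution of a Helmholtz equation with vanishing Cauchy data $u = \partial_r u = 0$ on $\partial D_1$, which is impossible by unique continuation. Hence $\lambda > 2\kappa$ and $\alpha \ne \sqrt{\kappa}$ for every eigenfunction.

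With the degenerate case ruled out, part~1 of Lemma~\ref{lserge1} together with the regularity of $u_k$ at $r = 0$ (which discards $Y_{\abs{k}}(\alpha r)$ and $Y_{\abs{k}}(\kappa r/\alpha)$) shows that each $u_k$ has the form~\eqref{eq R}. Imposing $R(1) = 0$ and $R'(1) = 0$ produces the homogeneous system
\begin{equation*}
  \begin{cases}
    c\,J_{\abs{k}}(\alpha) + d\,J_{\abs{k}}(\kappa/\alpha) = 0,\\[1\jot]
    c\,\alpha J'_{\abs{k}}(\alpha) + d\,\tfrac{\kappa}{\alpha} J'_{\abs{k}}(\kappa/\alpha) = 0,
  \end{cases}
\end{equation*}
which admits $(c,d)\ne(0,0)$ if and only if its determinant vanishes; a direct computation identifies that determinant with $F_k(\alpha)$, giving~\eqref{Dbis}. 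Conversely, whenever $F_k(\alpha) = 0$ the system has a nontrivial solution, and the corresponding $R(r)\e^{\i k\theta}$ solves~\eqref{EDP} by construction and meets the boundary conditions, hence is an eigenfunction with eigenvalue $\lambda = \alpha^2 + \kappa^2/\alpha^2$.

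I expect the main obstacle to be the exclusion of the degenerate case $\alpha = \sqrt{\kappa}$, as this is the only place where the argument genuinely departs from the $\kappa = 0$ analysis: the determinant identification is routine and the Fourier decoupling is identical to Theorem~\ref{tserge2}. The unique-continuation route sketched above is the cleanest; alternatively one could compute the $2\times2$ determinant attached to the basis~\eqref{eq:R:2} and verify directly that it never vanishes, but this requires an extra Bessel identity and is less transparent.
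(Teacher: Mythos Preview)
Your proposal is correct. The Fourier decoupling and the determinant identification for the nondegenerate case match the paper exactly; the genuine difference lies in how you rule out $\alpha=\sqrt{\kappa}$. You argue at the PDE level: equality in $\int(\Delta u+\kappa u)^2\ge 0$ forces $\Delta u+\kappa u=0$ with zero Cauchy data on $\partial D_1$, and unique continuation kills $u$. The paper instead stays inside the ODE picture and does precisely the ``alternative'' you mention: it writes down the $2\times2$ system attached to the basis~\eqref{eq:R:2}, computes its determinant $D_k(\alpha)$, and shows via the Bessel equation that $D_k(\alpha)=-\tfrac{1}{\alpha}\tilde H_{|k|}(\alpha)$, which is strictly negative by Lemma~\ref{cserge1}. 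Your route is shorter and handles all angular modes simultaneously, at the cost of invoking an external unique-continuation principle; the paper's route is self-contained and reuses the function $\tilde H_k$ that is needed anyway for the monotonicity arguments later in Section~\ref{Sect3}.
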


\begin{proof}
  First observe that,
in the case $\alpha = \sqrt{\kappa}$, there exists a non-trivial
function of the form~\eqref{eq:R:2} satisfying the boundary
conditions at $r=1$ if
and only if the system
\begin{equation*}
  \begin{cases}
    c\,J_{\abs{k}}(\sqrt{\kappa})+d \, J_{\abs{k}}'(\sqrt{\kappa})=0
    \\[0.5\jot]
    c\,\sqrt{\kappa}\,J_{\abs{k}}'(\sqrt{\kappa})
    + d \bigl(J_{\abs{k}}'(\sqrt{\kappa})
    + \sqrt{\kappa}\, J_{\abs{k}}''(\sqrt{\kappa})\bigr)=0
  \end{cases}
\end{equation*}
has a non trivial solution $(c,d)$. This holds if and only if
$\alpha = \sqrt{\kappa}$ is a solution of
\begin{equation}\label{Dter}
  D_k(\alpha) :=
  J_{|k|}(\alpha)  \bigl(J'_{|k|}(\alpha)
  + \alpha \, J''_{|k|}(\alpha) \bigr)
  - \alpha \bigl(J'_{|k|}(\alpha)\bigr)^2 =0.
\end{equation}
Note that, for all $\alpha > 0$, using  the equation \eqref{A:6}
  satisfied by Bessel functions,
\begin{math}
  D_k(\alpha) = -\frac{1}{\alpha} \tilde H_{\abs{k}}(\alpha) < 0
\end{math}
where $\tilde H_{\abs{k}}$ is defined by~\eqref{eq:tilde H}.
Consequently \eqref{Dter} possesses no solution $\alpha > 0$.
\medskip

In the case~\eqref{eq R}, the boundary conditions at $r=1$
lead to the system
\begin{equation}
  \label{eq:R bd cond}
  \begin{cases}
    c\, J_{|k|}(\alpha)
    + d\, J_{|k|}\bigl(\frac{\kappa}{\alpha} \bigr) = 0,
    \\[1\jot]
    c \,\alpha J'_{|k|}(\alpha )
    + d\, \frac{\kappa}{\alpha}
    J'_{|k|}\bigl(\frac{\kappa}{\alpha} \bigr) = 0.
  \end{cases}
\end{equation}
This $2\times 2$ system has a non-trivial solution if and only if its
determinant is equal to zero, namely if and only if \eqref{Dbis} is satisfied.

The same arguments than the ones used in Theorem \ref{tserge2}
allow to conclude that no other eigenvalues exist.
\end{proof}

\begin{Thm}
  \label{thm:alpha kl}
  For all $k\in \IN$ and $\kappa>0$,
  the roots of~$F_k$ (defined by~\eqref{Dbis}\/) can be ordered as
  an increasing sequence $\alpha_{k,\ell} = \alpha_{k,\ell}(\kappa) > 0$,
  with $\ell\in {\IZ}$, such that
  \begin{gather*}
    \forall \ell \ge 0,\qquad
    \alpha_{k,-\ell}=\frac{\kappa}{\alpha_{k,\ell}},\\
    \alpha_{k,0}=\sqrt{\kappa}
    \quad \text{and}\quad
    \forall \ell > 0,\quad
    \alpha_{k,\ell}>\sqrt{\kappa}>\alpha_{k,-\ell} \, ,\\
    \alpha_{k,\ell}\to +\infty \text{ as } \ell\to +\infty,\\
    \alpha_{k,\ell}\to 0 \text{ as } \ell\to -\infty.
  \end{gather*}
  Each $\ell \ne 0$ gives rise to the eigenvalue
  \begin{equation}
    \label{rel_lambda_alpha}
    \lambda_{k,\ell} = \alpha_{k,\ell}^2 +
    \frac{\kappa^2}{\alpha_{k,\ell}^2} = \alpha_{k,\ell}^2
    + \alpha_{k,-\ell}^2 \, ,
\end{equation}
  and a corresponding eigenfunction
  of the form $R_{k,\ell}(r)\e^{\i k\theta}$ with
  \begin{equation*}
    R_{k,\ell}(r)=cJ_k(\alpha_{k,\ell}\, r)+ d J_k(\alpha_{k,-\ell}\, r),
  \end{equation*}
  and $c, d$ solutions to~\eqref{eq:R bd cond} with $\alpha=\alpha_{k,\ell}$.
\end{Thm}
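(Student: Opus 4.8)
The plan is to exploit a symmetry of $F_k$ under the involution $\alpha \mapsto \kappa/\alpha$ so as to reduce the whole statement to the half-line $\intervalco{\sqrt{\kappa},+\infty}$, and then to count the roots there by turning the equation $F_k=0$ into a \emph{monotone} scalar equation built from the function $H_k$ of Lemma~\ref{cserge1}.

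First I would record the elementary identity $F_k(\kappa/\alpha) = -F_k(\alpha)$, obtained by substituting $\alpha \mapsto \kappa/\alpha$ in~\eqref{Dbis} and using $\kappa/(\kappa/\alpha) = \alpha$. Two consequences follow at once: the fixed point satisfies $F_k(\sqrt{\kappa}) = -F_k(\sqrt{\kappa})$, hence $F_k(\sqrt{\kappa}) = 0$, so I set $\alpha_{k,0} = \sqrt{\kappa}$; and $\alpha$ is a root of $F_k$ if and only if $\kappa/\alpha$ is, which produces the pairing $\alpha_{k,-\ell} = \kappa/\alpha_{k,\ell}$ and shows it suffices to locate the roots in $\intervaloo{\sqrt{\kappa},+\infty}$. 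Since $\alpha \mapsto \kappa/\alpha$ maps $\intervaloo{\sqrt{\kappa},+\infty}$ decreasingly onto $\intervaloo{0,\sqrt{\kappa}}$, the limits $\alpha_{k,\ell} \to +\infty$ and $\alpha_{k,-\ell} \to 0$ become equivalent statements, and the inequalities $\alpha_{k,\ell} > \sqrt{\kappa} > \alpha_{k,-\ell}$ are automatic.

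Next, on $\intervaloo{\sqrt{\kappa},+\infty}$ and away from the zeros of $\alpha \mapsto J_k(\alpha)$ and of $\alpha \mapsto J_k(\kappa/\alpha)$, I would divide~\eqref{Dbis} by $J_k(\alpha)\,J_k(\kappa/\alpha)$ and use $H_k(z) = z J_k'(z)/J_k(z)$ to rewrite $F_k(\alpha)=0$ as
\begin{equation*}
  \Psi_k(\alpha) := H_k(\kappa/\alpha) - H_k(\alpha) = 0 .
\end{equation*}
The crucial point is that $\Psi_k$ is strictly increasing on every interval on which it is smooth: differentiating and invoking $H_k' < 0$ from Lemma~\ref{cserge1} gives $\Psi_k'(\alpha) = -\tfrac{\kappa}{\alpha^2}H_k'(\kappa/\alpha) - H_k'(\alpha) > 0$. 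The singularities of $\Psi_k$ sit at $\alpha = j_{k,\ell}$ (where $H_k(\alpha)$ blows up) and at $\alpha = \kappa/j_{k,m}$ with $j_{k,m}<\sqrt{\kappa}$ (where $H_k(\kappa/\alpha)$ blows up); only finitely many of the latter occur, so the singularities accumulate only at $+\infty$. The one-sided limits from Lemma~\ref{cserge1} give, at each of them, $\Psi_k \to +\infty$ on the left and $\Psi_k \to -\infty$ on the right. Hence on each interval between two consecutive singularities $\Psi_k$ increases from $-\infty$ to $+\infty$ and has exactly one zero, while on the first interval, which starts at $\sqrt{\kappa}$ with $\Psi_k(\sqrt{\kappa}) = 0$, the function stays positive. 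This yields precisely one root $\alpha_{k,\ell}$ per interval and the strictly increasing sequence $\sqrt{\kappa} = \alpha_{k,0} < \alpha_{k,1} < \cdots \to +\infty$. Finally, each root with $\ell \ne 0$ feeds Proposition~\ref{eigenfunctions}, giving the eigenvalue $\lambda_{k,\ell} = \alpha_{k,\ell}^2 + \kappa^2/\alpha_{k,\ell}^2$; the symmetry shows $\lambda_{k,-\ell} = \lambda_{k,\ell}$, and the eigenfunction has the stated form $R_{k,\ell}(r) = c\,J_k(\alpha_{k,\ell}\,r) + d\,J_k(\alpha_{k,-\ell}\,r)$ with $(c,d)$ solving~\eqref{eq:R bd cond}.

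I expect the main obstacle to be the bookkeeping at the exceptional values $\kappa = j_{k,\ell}\,j_{k,m}$, for which a zero of $\alpha \mapsto J_k(\alpha)$ coincides with a zero of $\alpha \mapsto J_k(\kappa/\alpha)$: there the factorization $F_k = J_k(\alpha)\,J_k(\kappa/\alpha)\,\Psi_k(\alpha)$ breaks down, since two singularities of $\Psi_k$ collide, yet $F_k$ itself vanishes at that point. I would treat these separately by evaluating~\eqref{Dbis} directly at such a collision (both products vanish, so it is a genuine root) and checking that it occupies exactly the slot that the generic analysis would have assigned to the root in the now-degenerate interval, so that the $\IZ$-indexing and the strict monotonicity of $(\alpha_{k,\ell})_\ell$ persist. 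Care is also needed to confirm, in the non-degenerate case, that no additional root is hidden at a singular point: at $\alpha = j_{k,\ell}$ one has $F_k(\alpha) = -\alpha\,J_k(\kappa/\alpha)\,J_k'(\alpha)$, which is nonzero because the roots of $J_k$ are simple and $J_k(\kappa/\alpha)\neq 0$ off the exceptional set.
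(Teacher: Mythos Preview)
Your proof is correct and takes a genuinely different route from the paper. The paper establishes the existence of infinitely many roots in $\intervaloo{\sqrt{\kappa},+\infty}$ by computing the asymptotic behaviour of $F_k$ as $\alpha\to+\infty$ (via the Bessel asymptotics~\eqref{A:7}--\eqref{A:8}) and observing that $F_k$ oscillates; it then rules out finite accumulation points by appealing to the variational theory of eigenvalues (distinct $\alpha>\sqrt{\kappa}$ give distinct $\lambda$, and the spectrum is discrete). Your argument instead factors $F_k(\alpha)=J_k(\alpha)\,J_k(\kappa/\alpha)\,\Psi_k(\alpha)$ and exploits the strict monotonicity of $\Psi_k$ coming from $H_k'<0$ (Lemma~\ref{cserge1}) to pin down \emph{exactly one} root in each interval between consecutive singularities. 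This is both more elementary (no spectral theory, no asymptotic expansions) and more informative: you get the precise interval in which each $\alpha_{k,\ell}$ lies, and discreteness is automatic. In fact the paper uses essentially your monotone function $\Psi_k$ (denoted $\tilde F_k$ there) later, in the proofs of Lemmas~\ref{alpha increase} and~\ref{kappa->0}, to obtain the $\C^1$-dependence on $\kappa$ and the localisation $j_{k,\ell}<\alpha_{k,\ell}<j_{k,\ell+1}$, so you are effectively front-loading that analysis. Your handling of the exceptional values $\kappa=j_{k,\ell}\,j_{k,m}$ (collision of two singularities of $\Psi_k$, with $F_k$ vanishing there directly) and your check that no root hides at an isolated singularity are both correct; for completeness you should symmetrically note that at $\alpha=\kappa/j_{k,m}$ with $J_k(\alpha)\ne 0$ one has $F_k(\alpha)=\tfrac{\kappa}{\alpha}J_k(\alpha)J_k'(j_{k,m})\ne 0$ as well.
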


\begin{proof}
  First notice that
  \begin{equation*}
    \forall \alpha > 0,\quad
    F_k\Bigl(\frac{\kappa}{\alpha}\Bigr) = -F_k(\alpha),
  \end{equation*}
	where $F_k$ is defined in \eqref{Dbis}.
  As a consequence, $F_k(\sqrt\kappa) = 0$ and we set $\alpha_{k,0} :=
  \sqrt\kappa$.  Moreover it suffices to
  find the roots of $F_k$ in $\intervaloo{\sqrt\kappa, +\infty}$.  The function
  $F_k$ being continuous on $\intervaloo{0,\infty}$, it will possess infinitely
  many roots provided it changes sign infinitely
  many times when $\alpha \to +\infty$.

  Formula \eqref{A:4}  implies
  that,
  \begin{equation*}
    F_k(\alpha)
    = \alpha J_k\Bigl(\frac{\kappa}{\alpha}\Bigr) J_{k+1}(\alpha)
    - \frac{\kappa}{\alpha}J_k(\alpha)
    J_{k+1}\Bigl(\frac{\kappa}{\alpha}\Bigr).
  \end{equation*}
  Hence, noting that $\kappa/\alpha = o(1)$, if $\alpha\to\infty$, formulas \eqref{A:7} and \eqref{A:8} imply that
  \begin{equation*}
    F_k(\alpha)
    = \sqrt{\frac{2\alpha}{\pi}}\frac{1}{k!}
    \Bigl(\frac{\kappa}{2\alpha}\Bigr)^k
    \Bigl(
    \cos\bigl( \alpha - \tfrac{2k+3}{4} \pi \bigr)
    + o(1) \Bigr)
    \quad \text{as } \alpha \to +\infty.
  \end{equation*}
  Thus $F_k$ oscillates an infinite number of times
  as $\alpha \to +\infty$.
  This yields the sequence of $\alpha_{k,\ell} > 0$ with
  $\ell > 0$.

  Observe that the only possible accumulation points are $0$ and
  $+\infty$ as otherwise the corresponding eigenvalues
  $\lambda_{k,\ell} = \alpha_{k,\ell}^2 + \kappa^2 / \alpha_{k,\ell}^2$
  would have a finite accumulation point which contradicts the
  variational theory of eigenvalues.
\end{proof}

In order to better understand the behaviour of the eigenvalues and of the
corresponding eigenfunctions, we will now study the functions
$\alpha_{k,\ell}$.

\begin{Lem}
  \label{alpha increase}%
  For all $k \in \IN$ and $\ell \in \IZ$, the function $\alpha_{k,\ell}: \intervaloo{0,
    +\infty} \to \IR : \kappa \mapsto \alpha_{k,\ell}(\kappa)$ is of
  class $\C^1$ and $\partial_\kappa \alpha_{k,\ell} > 0$.
\end{Lem}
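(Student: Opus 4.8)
The plan is to regard $F_k$, defined in~\eqref{Dbis}, as a $\C^\infty$ function of the two variables $(\alpha,\kappa)\in\intervaloo{0,+\infty}^2$, to recover each $\alpha_{k,\ell}$ as an implicitly defined root, and to apply the implicit function theorem. Writing $\beta=\kappa/\alpha$ we have $F_k(\alpha)=\beta J_k(\alpha)J'_k(\beta)-\alpha J_k(\beta)J'_k(\alpha)$, which is smooth in $(\alpha,\kappa)$. The heart of the matter is to verify that $\partial_\alpha F_k\ne0$ at every root (this yields both the $\C^1$ regularity and the fact that roots cannot collide) and to determine the sign of $\partial_\kappa\alpha_{k,\ell}=-\partial_\kappa F_k/\partial_\alpha F_k$.

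The sign becomes transparent after the reformulation already suggested by the boundary system~\eqref{eq:R bd cond}. At a root where neither $J_k(\alpha)$ nor $J_k(\beta)$ vanishes, dividing by $J_k(\alpha)J_k(\beta)$ gives the factorization $F_k(\alpha)=-J_k(\alpha)J_k(\beta)\,\Psi(\alpha,\kappa)$ with $\Psi(\alpha,\kappa):=H_k(\alpha)-H_k(\beta)$ and $H_k$ the function of Lemma~\ref{cserge1}. Since $\Psi=0$ at such a root, the $J_k$ prefactors survive only through $\partial_\alpha F_k=-J_k(\alpha)J_k(\beta)\,\partial_\alpha\Psi$ and $\partial_\kappa F_k=-J_k(\alpha)J_k(\beta)\,\partial_\kappa\Psi$, so they cancel in the ratio. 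A direct differentiation gives
$$
\partial_\alpha\Psi=H'_k(\alpha)+\frac{\beta}{\alpha}H'_k(\beta),
\qquad
\partial_\kappa\Psi=-\frac{1}{\alpha}H'_k(\beta).
$$
By~\eqref{eq:tilde Hbis} together with the positivity of $\tilde H_k$ (Lemma~\ref{cserge1}) one has $H'_k<0$ wherever $J_k\ne0$, hence $\partial_\alpha\Psi<0$ and $\partial_\kappa\Psi>0$. Consequently $\partial_\alpha F_k\ne0$ and
$$
\partial_\kappa\alpha_{k,\ell}=-\frac{\partial_\kappa\Psi}{\partial_\alpha\Psi}>0 .
$$

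It remains to treat the exceptional roots at which $J_k(\alpha)=0$. As noted after~\eqref{Dbis}, simplicity of the zeros of $J_k$ forces $J_k(\beta)=0$ as well, i.e.\ $\alpha=j_{k,m}$, $\beta=j_{k,p}$ and $\kappa=j_{k,m}j_{k,p}$. There the factorization degenerates, so I would differentiate $F_k$ directly, using the Bessel equation~\eqref{A:6} to eliminate the second derivatives; at such a point one finds $\partial_\alpha F_k=2\beta J'_k(\alpha)J'_k(\beta)$ and $\partial_\kappa F_k=-J'_k(\alpha)J'_k(\beta)$, both nonzero by simplicity of the zeros, whence $\partial_\kappa\alpha_{k,\ell}=\alpha/(2\kappa)>0$ here too. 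Finally I would assemble the local branches into global ones: since $\partial_\alpha F_k\ne0$ at every root for every $\kappa$, all roots of $F_k(\cdot)$ stay simple and cannot merge as $\kappa$ varies, so the implicit function theorem produces a $\C^1$ (indeed $\C^\infty$) branch through each $\alpha_{k,\ell}(\kappa_0)$; continuity together with the ordering fixed in Theorem~\ref{thm:alpha kl} (with the separating root $\alpha_{k,0}\equiv\sqrt\kappa$) identifies this branch with $\kappa\mapsto\alpha_{k,\ell}(\kappa)$. The case $\ell<0$ follows verbatim, or from the relation $\alpha_{k,-\ell}=\kappa/\alpha_{k,\ell}$.

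The main obstacle I anticipate is organizational rather than analytic: handling the exceptional values $\kappa=j_{k,m}j_{k,p}$ where the clean $H_k$-reformulation breaks down, and making the global labelling of the branches rigorous by ruling out collisions of roots. The monotonicity sign, which might appear to be the crux, is essentially free once one observes that $H'_k<0$.
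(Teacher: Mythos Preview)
Your proposal is correct and follows essentially the same approach as the paper: implicit function theorem applied to $F_k$, with the same two-case split (generic roots handled via the factorization through $H_k$, exceptional roots $J_k(\alpha)=J_k(\beta)=0$ handled by direct differentiation), followed by the same global assembly of local branches. The only cosmetic difference is a sign convention: the paper works with $\tilde F_k(\alpha,\kappa)=H_k(\kappa/\alpha)-H_k(\alpha)=-\Psi$, so its $\partial_\alpha\tilde F_k>0$ and $\partial_\kappa\tilde F_k<0$ are exactly your $\partial_\alpha\Psi<0$ and $\partial_\kappa\Psi>0$.
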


\begin{proof}
  Let us note $F_k(\alpha, \kappa)$ the function $F_k(\alpha)$
  defined by~\eqref{Dbis}
  where we have explicited the dependence on $\kappa$.  
  The assertion
  will result from the Implicit Function Theorem.  Let us fix
  $k \in \IN$, $\kappa^* > 0$
  and $\alpha^* = \alpha_{k,\ell}(\kappa^*) > 0$ and distinguish two cases.
  \begin{itemize}
  \item If $J_k(\alpha^*) = 0$ (resp.\
    $J_k(\frac{\kappa^*}{\alpha^*}) = 0$) then
    $J_k'(\alpha^*) \ne 0$ (resp.\
    $J_k'(\frac{\kappa^*}{\alpha^*}) \ne 0$) because the roots of
    the Bessel functions are simple.  But then, the fact that
    $F_k(\alpha^*, \kappa^*) = 0$ implies that
    $J_k(\frac{\kappa^*}{\alpha^*}) = 0$ (resp.\
    $J_k(\alpha^*) = 0$).  A direct computation,
    using the fact that both $J_k(\alpha^*)$ and
    $J_k(\frac{\kappa^*}{\alpha^*})$ vanish, shows
    \begin{align*}
      \partial_\kappa F_k(\alpha^*, \kappa^*)
      &= - J'_k(\alpha^*) J'_k\Bigl(\frac{\kappa^*}{\alpha^*}\Bigr),
      \\
      \partial_\alpha F_k(\alpha^*, \kappa^*)
      &= 2 \frac{\kappa^*}{\alpha^*} J'_k(\alpha^*)
      J'_k\Bigl(\frac{\kappa^*}{\alpha^*}\Bigr)
      \ne 0.
    \end{align*}
    \sloppy %
    Therefore the Implicit Function Theorem implies that there exists
		$\C^1$ curve $\beta_{\ell}$ around $\kappa^*$ such that, in a neighbourhood of
		$(\alpha^*,\kappa^*)$, 
		$$
		F_k(\alpha,\kappa)=0 \text{ if and only if }\alpha=\beta_{\ell}(\kappa).
		$$
		Moreover
    $$
		\partial_\kappa \beta_{\ell}(\kappa^*) = - \frac{\partial_\kappa
    F_k(\alpha^*, \kappa^*) }{ \partial_\alpha F_k(\alpha^*, \kappa^*)}
    = \frac{\alpha^* }{ 2 \kappa^*} > 0.
		$$
  \item Let us now suppose that $J_k(\alpha^*) \ne 0$ and
    $J_k(\frac{\kappa^*}{\alpha^*}) \ne 0$.  Around such $(\alpha^*,
    \kappa^*)$, one can write
    \begin{equation*}
      F_k(\alpha, \kappa)
      = J_k(\alpha) J_k\Bigl(\frac{\kappa}{\alpha}\Bigr)
      \tilde F_k(\alpha, \kappa)
      \quad\text{with}\quad
      \tilde F_k(\alpha, \kappa)
      := H_k\Bigl(\frac{\kappa}{\alpha}\Bigr) - H_k(\alpha),
    \end{equation*}
    where $H_k$ is defined in Lemma~\ref{cserge1}.  Using
    Lemma~\ref{cserge1}, one deduces
    \begin{align*}
      \partial_\kappa \tilde F_k(\alpha, \kappa)
      &= \frac{1}{\alpha} H'_k\Bigl(\frac{\kappa}{\alpha}\Bigr) < 0,
      \\[1\jot]
      \partial_\alpha \tilde F_k(\alpha, \kappa)
      &= - \frac{\kappa}{\alpha^2}
      H'_k\Bigl(\frac{\kappa}{\alpha}\Bigr)
      - H'_k(\alpha) > 0.
    \end{align*}
    Therefore the Implicit Function Theorem applies to $\tilde F_k$
    and there exists a
    $\C^1$ curve $\beta_{\ell}$ defined around $\kappa^*$ such that,
    in a neighbourhood of~$(\alpha^*,\kappa^*)$, 
		$$
		F_k(\alpha,\kappa)=0 \text{ if and only if }\alpha=\beta_{\ell}(\kappa).
		$$
		Moreover
    $$
		\partial_\kappa \beta_{\ell}(\kappa^*) =
        - \frac{\partial_\kappa
      \tilde F_k(\alpha^*, \kappa^*)}{\mathstrut\partial_\alpha \tilde
      F_k(\alpha^*, \kappa^*)}  > 0.
			$$
  \end{itemize}
  This argument can be done for all $\ell$.
  Thus, for all $\ell$, we have a $\C^1$-curve emanating 
	from $\alpha_{k,\ell}(\kappa^*)$ such that, in a neighbourhood $U_{\ell}$ of $(\alpha_{k,\ell}(\kappa^*), \kappa^*)$,
	$$
	F_k(\alpha,\kappa)=0 \text{ if and only if }\alpha=\beta_{\ell}(\kappa).
	$$
  Moreover, as $F_k(\alpha, \kappa^*)\not=0$ for $\alpha\notin
  \{\alpha_{k,\ell}(\kappa^*)\mid \ell\in \IZ\}$,
  the continuity of $F_k$ implies the existence of
  a neighbourhood $V_{\ell}$ of
  $\{(\alpha,\kappa^*)\mid
  \alpha_{k,\ell-1}(\kappa^*)<\alpha<\alpha_{k,\ell}(\kappa^*),\linebreak[2]\
  (\alpha,\kappa^*)\notin U_{\ell-1}\cup U_{\ell}\}$ such that
  $F_k(\alpha,\kappa)\ne 0$ for $(\alpha, \kappa)\in V_{\ell}$.
  In this way,
  one shows that there is a
  neighbourhood $V$ of $[\sqrt{\kappa^*}, \alpha^*]$ and $W$ of
  $\kappa^*$ such that, for all $(\alpha, \kappa) \in V \times W$,
  \begin{equation*}
    F_k(\alpha, \kappa) = 0
    \quad\text{if and only if}\quad
    \alpha=\beta_{\ell'}(\kappa) \text{ for some }0 \le \ell' \le \ell.    
  \end{equation*}
		Shrinking $W$ if
  necessary, one can assume the curves
  $\beta_0, \beta_1,\dotsc, \beta_\ell$ do not cross each other.  For
  any given $\kappa \in W$, it then suffices to count the number of
  curves one meets to reach the one emanating from $(\alpha^*,
  \kappa^*)$ starting with $\alpha = \alpha_{k,0}(\kappa) =
  \sqrt{\kappa}$ to establish that
  \begin{equation*}
    \forall \kappa \in V,\quad
    \beta_\ell(\kappa) = \alpha_{k,\ell} (\kappa),
  \end{equation*}
  whence the desired result.
\end{proof}

\begin{Lem}
  \label{kappa->0}
  Let $k \in \IN$ and $\ell > 0$.
  As $\kappa \to 0$,  $\alpha_{k,\ell}(\kappa) \to j_{k+1,\ell}$.
  Consequently
  $\alpha_{k,-\ell}(\kappa) = \kappa / \alpha_{k,\ell}(\kappa) \to 0$ if $\kappa\to0$.
\end{Lem}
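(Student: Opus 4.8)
The plan is to show that, after a harmless normalization, the equation $F_k(\alpha)=0$ degenerates, as $\kappa\to0$, into the equation $J_{k+1}(\alpha)=0$ whose positive roots are exactly the $j_{k+1,\ell}$, and then to transfer this convergence to the roots themselves. First I would start from the rewriting already obtained in the proof of Theorem~\ref{thm:alpha kl},
\begin{equation*}
  F_k(\alpha)
  = \alpha J_k\Bigl(\tfrac{\kappa}{\alpha}\Bigr) J_{k+1}(\alpha)
  - \tfrac{\kappa}{\alpha} J_k(\alpha) J_{k+1}\Bigl(\tfrac{\kappa}{\alpha}\Bigr),
\end{equation*}
and introduce, for $m\in\IN$, the entire even function $\hat J_m(z) := (z/2)^{-m} J_m(z)$, which satisfies $\hat J_m(0)=1/m!$. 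On the interval of interest $\alpha>\sqrt{\kappa}$ we have $\beta:=\kappa/\alpha<\sqrt{\kappa}$, so substituting $J_m(\beta)=(\beta/2)^m\hat J_m(\beta)$ lets me factor out the strictly positive quantity $(\beta/2)^k$ and write $F_k(\alpha)=(\beta/2)^k G_k(\alpha,\kappa)$ with
\begin{equation*}
  G_k(\alpha,\kappa)
  := \alpha\,\hat J_k(\beta)\,J_{k+1}(\alpha)
  - \tfrac{\beta^2}{2}\,J_k(\alpha)\,\hat J_{k+1}(\beta).
\end{equation*}
Since $(\beta/2)^k>0$, the positive roots of $F_k$ and of $G_k(\cdot,\kappa)$ coincide.

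The point of this reduction is that, as $\kappa\to0$, one has $\beta\to0$ uniformly on every compact subset of $\intervaloo{0,+\infty}$, so that $G_k(\cdot,\kappa)\to G_k^0$ locally uniformly, where $G_k^0(\alpha):=\frac{\alpha}{k!}J_{k+1}(\alpha)$; differentiating the explicit expression gives the analogous convergence of $\partial_\alpha G_k$ to $(G_k^0)'$. The positive zeros of $G_k^0$ are exactly the points $j_{k+1,\ell}$, and they are simple because the roots of $J_{k+1}$ are simple. I would then argue as usual at a simple zero $j=j_{k+1,\ell}$: on a small interval $\intervalcc{j-\epsilon,j+\epsilon}$ the function $G_k^0$ changes sign while $(G_k^0)'$ keeps a constant sign, so by the $C^1$ convergence $G_k(\cdot,\kappa)$ also changes sign and stays strictly monotone there for $\kappa$ small, hence has exactly one root, which therefore tends to $j_{k+1,\ell}$. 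On any compact subinterval of $\intervaloo{0,+\infty}$ avoiding all the $j_{k+1,\ell}$, the limit $G_k^0$ is bounded away from $0$, so $G_k(\cdot,\kappa)$ has no root there for $\kappa$ small.

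What this compactness argument cannot see is the region where $\alpha$ is close to $0$, which is not contained in a fixed compact subset of $\intervaloo{0,+\infty}$; this is the main obstacle, since a priori a root could drift towards $0$ as $\kappa\to0$. I would rule this out by a small-argument expansion: dividing $G_k$ once more by $\tfrac12(\alpha/2)^k$ turns it into $\alpha^2\hat J_k(\beta)\hat J_{k+1}(\alpha) - \beta^2\hat J_k(\alpha)\hat J_{k+1}(\beta)$, which for $\alpha,\beta$ small equals $\frac{\alpha^2-\beta^2}{k!\,(k+1)!}\bigl(1+o(1)\bigr)$. Since $\alpha>\sqrt{\kappa}>\beta$ forces $\alpha^2-\beta^2>0$, this shows that, for a fixed small $\delta>0$ and $\kappa$ small enough, $F_k$ does not vanish on $\intervaloo{\sqrt{\kappa},\delta}$; hence no root escapes to $0$ and every positive root of $F_k$ lies within $\epsilon$ of some $j_{k+1,\ell}$.

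Putting these facts together, the positive roots of $F_k$ in $\intervaloo{\sqrt{\kappa},+\infty}$, listed in increasing order, accumulate one-to-one and in order onto $j_{k+1,1}<j_{k+1,2}<\cdots$; the monotonicity in $\kappa$ furnished by Lemma~\ref{alpha increase} guarantees that each $\kappa\mapsto\alpha_{k,\ell}(\kappa)$ has a limit as $\kappa\to0^+$, which by the matching of the two labelings must be $j_{k+1,\ell}$. Hence $\alpha_{k,\ell}(\kappa)\to j_{k+1,\ell}$, and consequently $\alpha_{k,-\ell}(\kappa)=\kappa/\alpha_{k,\ell}(\kappa)\to 0$, the numerator tending to $0$ and the denominator to $j_{k+1,\ell}>0$.
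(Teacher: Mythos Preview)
Your argument is correct and takes a genuinely different route from the paper's proof. You normalize $F_k$ by the factor $(\beta/2)^k$ so that the resulting function $G_k(\cdot,\kappa)$ converges in $C^1$ on compacta to $\frac{\alpha}{k!}J_{k+1}(\alpha)$, and then use a standard simple-root perturbation argument together with a separate small-$\alpha$ expansion to rule out roots escaping towards~$0$; the ordering of roots then forces the $\ell$-th root to go to $j_{k+1,\ell}$.

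The paper proceeds instead by rewriting $F_k(\alpha)=0$ (on the region where $J_k(\alpha)$ and $J_k(\kappa/\alpha)$ do not vanish) as $H_k(\kappa/\alpha)=H_k(\alpha)$, where $H_k(z)=zJ_k'(z)/J_k(z)$ is strictly decreasing on each interval between consecutive zeros of $J_k$ (Lemma~\ref{cserge1}). Restricting to $\kappa<j_{k,1}^2$ makes $H_k(\kappa/\alpha)$ bounded, and then the monotonicity and the limits of $H_k$ at consecutive zeros of $J_k$ show directly that $F_k$ has exactly one root in each interval $\intervaloo{j_{k,\ell},j_{k,\ell+1}}$; this a~priori localization $j_{k,\ell}<\alpha_{k,\ell}(\kappa)<j_{k,\ell+1}$ both prevents escape to~$0$ and, after passing to the limit in the equation, pins down the limit as the unique zero of $J_{k+1}$ in that interval via the interlacing~\eqref{A:9}. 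So the paper's approach yields the localization inequality as a byproduct, while yours is more self-contained (it does not invoke Lemma~\ref{cserge1}) and more perturbative in spirit; both reach the same conclusion.
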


\begin{proof}
Without loss of generality, we can
  restrict $\kappa$ to $\intervaloo{0, j_{k,1}^2}$ so that, as 
	we will only consider $\alpha > \sqrt{\kappa}$, we have $\kappa /
  \alpha < \sqrt{\kappa} < j_{k,1}$ and thus $J_k(\kappa / \alpha) \ne
  0$.  For such $\kappa$, one also has that $J_k(\alpha_{k,\ell}) \ne
  0$ (otherwise that would imply $J_k(\kappa / \alpha_{k,\ell} ) = 0$, see the
  proof of Lemma~\ref{alpha increase}) and so $\alpha_{k,\ell}(\kappa)
  \ne j_{k,m}$ for any $m \ge 1$.  
	
  According to formulas \eqref{A:8} and \eqref{A:4}, one has as $z
  \to 0$,
  $$
	\begin{array}{cl}
	\displaystyle
    J_k(z) = \frac{1+o(1)}{k!} (\tfrac{1}{2} z)^k ,&
    \\[3mm]
  \displaystyle
    J'_k(z) = - J_{k+1}(z) + \frac{k}{z} J_k(z)
    = \frac{1+o(1)}{2 (k-1)!} (\tfrac{1}{2} z)^{k-1}
    &\qquad \text{if } k\not=0,
    \\[3mm]
    \displaystyle
    J_0'(z)=-(1+o(1))\frac{z}{2}.&
  \end{array}
	$$
	This implies that 
	\begin{equation}
	\label{Hk}
	\lim_{z \to 0} H_k(z) = k
	\end{equation}
  where $H_k$ is defined in Lemma \ref{cserge1},
	and so, restricting further $\kappa$, one can assume
  $H_k(\kappa / \alpha)$ is bounded (as $0<\kappa / \alpha < \sqrt{\kappa}$).
    
  Let us start by showing that
  \begin{equation*}
    j_{k,1} < \alpha_{k,1}(\kappa) < j_{k,2}.
  \end{equation*}
  As $\sqrt{\kappa} < \alpha_{k,1}(\kappa)$, in order to establish the left inequality, it suffices to show
  that for all $\alpha \in \intervaloo{\sqrt{\kappa}, j_{k,1}}$,
  $F_k(\alpha) \ne 0$ ($\alpha_{k,1} \ne j_{k,1}$ was established
  above).  But, for
  $\alpha$  below the first root of $J_k$, $F_k(\alpha) \ne 0$ is
  equivalent to $\tilde F_k(\alpha, \kappa) \ne 0$ where $\tilde F_k$,
  defined in the proof of Lemma~\ref{alpha increase}, is a smooth
  function on $\intervaloo{\sqrt{\kappa}, j_{k,1}}$.  The argument is complete if one recalls
  that $\partial_\alpha \tilde F_k(\alpha, \kappa) > 0$ and that
  $\tilde F_k(\sqrt{\kappa}, \kappa) = 0$.  
	
	For the right inequality,
  we first notice that the boundedness of $H_k(\kappa / \alpha)$ and
  Lemma~\ref{cserge1} imply
  \begin{equation*}
    \lim_{\alpha \xrightarrow{>} j_{k,1}} \tilde F_k(\alpha, \kappa) = -\infty
    \quad\text{and}\quad
    \lim_{\alpha \xrightarrow{<} j_{k,2}} \tilde F_k(\alpha, \kappa) = +\infty.
  \end{equation*}
  By continuity and monotonicity, $\tilde F_k(\alpha, \kappa)$ must
  possess a unique zero $\alpha \in \intervaloo{j_{k,1}, j_{k,2}}$.
  Since we are between two consecutive roots of $J_k$, that implies
  $F_k(\alpha) = 0$ and thus the desired inequality by definition
  of~$\alpha_{k,1}$. 
\medbreak

  The same reasoning applies to $\alpha \mapsto \tilde F_k(\alpha,
  \kappa)$ on the interval $\intervaloo{j_{k,\ell}, j_{k,\ell+1}}$, $\ell \ge
  2$, thereby proving the existence of a unique root of $F_k$ in that
  interval.  Counting the number of roots below shows that this root
  is nothing but $\alpha_{k,\ell}$ therefore establishing that
  \begin{equation*}
    j_{k,\ell} < \alpha_{k,\ell}(\kappa) < j_{k,\ell+1}.
  \end{equation*}

  \sloppy %
  Now let us pass to the limit $\kappa \to 0$.  Given that
  $\alpha_{k,\ell}$ is increasing and bounded from below by a positive
  constant, we have $\alpha_{k,\ell}^* := \lim_{\kappa \xrightarrow{>} 0}
  \alpha_{k,\ell}(\kappa) \in \intervalco{j_{k,\ell}, j_{k,\ell+1}}$.
  Notice that, as $J_k\bigl(\frac{\kappa}{\alpha_{k,\ell}}\bigr)\ne 0$,
  the equation
  $F_k(\alpha_{k,\ell}) = 0$ can be rewritten as
  \begin{equation*}
    J_k(\alpha_{k,\ell}) H_k\Bigl(\frac{\kappa}{\alpha_{k,\ell}}\Bigr)
    - \alpha_{k,\ell} J'_k(\alpha_{k,\ell}) = 0.
  \end{equation*}
  Passing to the limit $\kappa \to 0$ in this equation yields, by \eqref{Hk},
  \begin{math}
    J_k(\alpha_{k,\ell}^*) k
    - \alpha_{k,\ell}^*  J'_k(\alpha_{k,\ell}^*) = 0
  \end{math}
  or equivalently, by formula \eqref{A:4},
  $J_{k+1}(\alpha_{k,\ell}^*) = 0$.  As $j_{k,\ell} \le
  \alpha_{k,\ell}^* < j_{k,\ell+1}$, the interlacing property of the
  zeros of Bessel functions (see e.g. \eqref{A:9})
  implies that $\alpha_{k,\ell}^* = j_{k+1,\ell}$.
\end{proof}

\begin{Lem}
  \label{infinity}
  For all $k\in \IN$ and all $\ell\in \IZ$, we have
  $\displaystyle\lim_{\kappa\to\infty}\alpha_{k,\ell}(\kappa)=+\infty$.
\end{Lem}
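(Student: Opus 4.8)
The plan is to exploit the monotonicity already established. By Lemma~\ref{alpha increase}, for each fixed $\ell$ the function $\kappa \mapsto \alpha_{k,\ell}(\kappa)$ is $\C^1$ and strictly increasing, so the limit $L := \lim_{\kappa\to\infty}\alpha_{k,\ell}(\kappa)$ exists in $\intervaloc{0,+\infty}$ and it remains to show $L = +\infty$. When $\ell \ge 0$ this is immediate, since Theorem~\ref{thm:alpha kl} gives $\alpha_{k,\ell}(\kappa) \ge \alpha_{k,0}(\kappa) = \sqrt{\kappa} \to +\infty$. Thus I only need to treat the indices $\ell = -m$ with $m > 0$, for which $\alpha_{k,-m} = \kappa/\alpha_{k,m} < \sqrt{\kappa}$.

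I would argue by contradiction, assuming $L < +\infty$. Set $\beta(\kappa) := \kappa/\alpha_{k,-m}(\kappa) = \alpha_{k,m}(\kappa)$; this is continuous in $\kappa$ and, since $\alpha_{k,-m}(\kappa) \to L \in \intervaloo{0,+\infty}$ while $\kappa \to +\infty$, one has $\beta(\kappa) \to +\infty$. On the other hand, because $\alpha_{k,-m}$ increases strictly to its finite limit $L$, for all $\kappa$ large enough $\alpha_{k,-m}(\kappa)$ stays in a (possibly one-sided) neighbourhood of $L$ that contains no zero of $J_k$; hence $J_k(\alpha_{k,-m}(\kappa)) \ne 0$ for $\kappa \ge \kappa_1$.

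The mechanism producing the contradiction is the following vanishing dichotomy, already used in the proof of Lemma~\ref{alpha increase}: rewriting $F_k$ by means of~\eqref{A:4} as
\begin{equation*}
  F_k(\alpha)
  = \alpha J_k\Bigl(\tfrac{\kappa}{\alpha}\Bigr) J_{k+1}(\alpha)
  - \tfrac{\kappa}{\alpha} J_k(\alpha) J_{k+1}\Bigl(\tfrac{\kappa}{\alpha}\Bigr),
\end{equation*}
one sees that if $J_k(\kappa/\alpha) = 0$, i.e.\ $\kappa/\alpha = j_{k,p}$ for some $p$, then $J_{k+1}(\kappa/\alpha) \ne 0$ (consecutive Bessel functions have interlaced, hence distinct, positive zeros), so $F_k(\alpha) = 0$ forces $J_k(\alpha) = 0$. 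Now, since $\beta(\kappa) \to +\infty$ continuously, the intermediate value theorem yields some $\kappa_2 \ge \kappa_1$ with $\beta(\kappa_2) = j_{k,p}$ for a suitably large $p$, that is $J_k\bigl(\kappa_2/\alpha_{k,-m}(\kappa_2)\bigr) = 0$. As $F_k(\alpha_{k,-m}(\kappa_2),\kappa_2) = 0$, the dichotomy gives $J_k(\alpha_{k,-m}(\kappa_2)) = 0$, contradicting the choice of $\kappa_1$. Therefore $L = +\infty$.

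The step I expect to require the most care is the bookkeeping in the second paragraph: one must make sure that, for large $\kappa$, $\alpha_{k,-m}(\kappa)$ genuinely avoids the zeros of $J_k$ (this is where strict monotonicity of $\alpha_{k,-m}$, together with the fact that it approaches $L$ from one side when $L$ happens to be a zero of $J_k$, is essential), so that the forced vanishing $J_k(\alpha_{k,-m}(\kappa_2)) = 0$ is a genuine contradiction rather than an admissible coincidence.
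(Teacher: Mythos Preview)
Your argument is correct and follows essentially the same route as the paper: the case $\ell \ge 0$ is dispatched via $\alpha_{k,\ell}\ge\sqrt{\kappa}$, and for $\ell<0$ one argues by contradiction, using that a finite limit would keep $\alpha_{k,-m}(\kappa)$ away from the zeros of $J_k$ while the companion $\alpha_{k,m}(\kappa)=\kappa/\alpha_{k,-m}(\kappa)\to+\infty$ must cross such zeros, which via the factorized form of $F_k$ forces $J_k(\alpha_{k,-m}(\kappa))=0$. The only cosmetic difference is that you invoke the identity $F_k(\alpha)=\alpha J_k(\kappa/\alpha)J_{k+1}(\alpha)-\tfrac{\kappa}{\alpha}J_k(\alpha)J_{k+1}(\kappa/\alpha)$ and the interlacing of zeros of $J_k$ and $J_{k+1}$, whereas the paper works directly with~\eqref{Dbis} and simplicity of the roots; both yield the same vanishing dichotomy.
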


\begin{proof}
  This is obvious for $\ell \ge 0$ because $\alpha_{k,\ell}(\kappa)
  \ge \sqrt{\kappa}$.
  Assume on the contrary that there exists $\ell > 0$ such that
  $\lim_{\kappa\to\infty}\alpha_{k,-\ell}(\kappa)<+\infty$
  (recall that $\alpha_{k,-\ell}$ is increasing).  Hence,
  there exists $\kappa^*>0$ such that, for all $\kappa>\kappa^*$,
  $\alpha_{k,{-\ell}}(\kappa)$ lies between two consecutive roots of
  $J_k$ and of $J_{k}'$, i.e., $J_k(\alpha_{k,{-\ell}}(\kappa))\not=0$
  and $J_{k}'(\alpha_{k,{-\ell}}(\kappa))\ne 0$.  Because the roots of
  $J_k$ are simple, \eqref{Dbis} implies that, for all $\kappa>\kappa^*$,
  $J_k(\alpha_{k,{\ell}}(\kappa))\not=0$ and
  $J_{k+1}(\alpha_{k,{\ell}}(\kappa))\ne 0$
  (recall that $\alpha_{k,{\ell}}(\kappa)=\kappa/\alpha_{k,{-\ell}}(\kappa)$).
  This contradicts the fact that $\alpha_{k,{\ell}}$ crosses infinitely
  many roots of $J_k$ because
  $\alpha_{k,{\ell}}$ is continuous and
  $\alpha_{k,{\ell}}(\kappa) \xrightarrow[\kappa \to \infty]{} +\infty$.
\end{proof}

\medbreak

As shown in Figure~\ref{figure 1} and~\ref{figure 2}, the curves
$\alpha_{k,\ell}$ and
$\alpha_{k+1,\ell}$ cross each other.
In Proposition \ref{intersa01,a11} we will characterize their
intersection points.  This will be done in several steps given by the
following lemmas.

\begin{figure}[ht]
  \centering
  \begin{tikzpicture}[x=0.5ex, y=2.5ex]
    \draw[->] (0,0) -- (100, 0) node[below]{$\kappa$};
    \draw[->] (0,0) -- (0, 13.5) node[left]{$\alpha$};
    \draw[color=gray, dotted, thick] plot file{data/sqrt_kappa.dat}
    node[right]{$\sqrt{\kappa}$};
    \draw[darkred,fill] (0, 3.8317) circle(1pt) node[left]{$j_{1,1}$};
    \draw[darkred,fill] (0, 7.0155) circle(1pt) node[left]{$j_{1,2}$};
    \draw[darkred,fill] (0, 10.1734) circle(1pt) node[left]{$j_{1,3}$};
    \draw[darkblue,fill] (0, 5.1356) circle(1pt) node[left]{$j_{2,1}$};      
    \draw[darkblue,fill] (0, 8.4172) circle(1pt) node[left]{$j_{2,2}$};      
    \draw[darkblue,fill] (0, 11.619841) circle(1pt) node[left]{$j_{2,3}$};      
    \clip (0,0) rectangle (100, 13); 
    \begin{scope}[color=darkred]
      \draw plot file{data/alpha_0_-1.dat};
      \node[rotate=20] at (35, 5.1) {$\alpha_{0,-1}$};
      \draw plot file{data/alpha_0_1.dat};
      \node at (11, 4) {$\alpha_{0,1}$};
      \draw plot file{data/alpha_0_2.dat};
      \node at (22, 7.8) {$\alpha_{0,2}$};
      \draw plot file{data/alpha_0_3.dat};
      \node at (30, 11) {$\alpha_{0,3}$};
    \end{scope}
    \begin{scope}[color=darkblue]
      \draw plot file{data/alpha_1_-1.dat};
      \node[rotate=20] at (40, 4.3) {$\alpha_{1,-1}$};
      \draw plot file{data/alpha_1_1.dat};
      \node at (5.3, 5.6) {$\alpha_{1,1}$};
      \draw plot file{data/alpha_1_2.dat};
      \node at (9, 8.8) {$\alpha_{1,2}$};
      \draw plot file{data/alpha_1_3.dat};
      \node at (15, 12) {$\alpha_{1,3}$};
    \end{scope}
  \end{tikzpicture}
  \caption{Graphs of $\alpha_{k,\ell}$}
  \label{figure 1}
\end{figure}
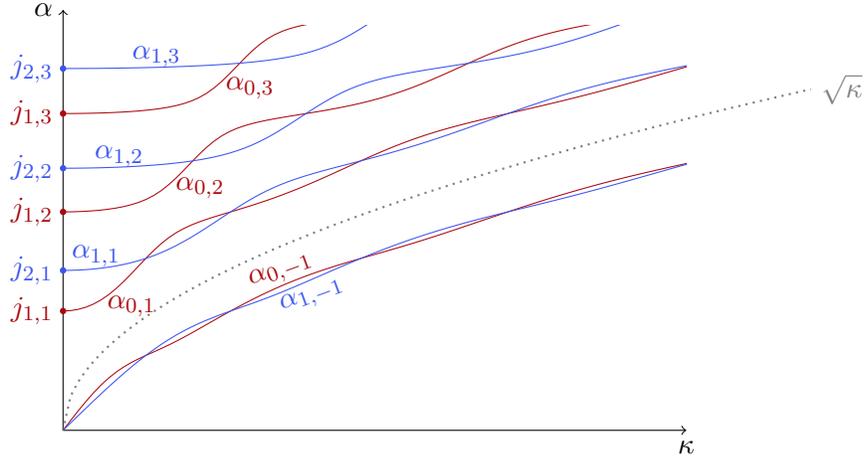

\begin{Lem}
  \label{common root}
  Let $k \in \IN$.  The functions $F_k$ and $F_{k+1}$ have a common
  positive root $\alpha \ne \sqrt{\kappa}$ if and only if
  there exists positive integers $m$ and $n$ such that $m \ne n$ and
  \begin{itemize}
  \item $\alpha = j_{k,n}$ and $\kappa / \alpha = j_{k,m}$
    (thus $\kappa = j_{k,m} \, j_{k,n}$), or
  \item $\alpha = j_{k+1,n}$ and $\kappa / \alpha = j_{k+1,m}$
    (thus $\kappa = j_{k+1,m} \, j_{k+1,n}$).
  \end{itemize}
\end{Lem}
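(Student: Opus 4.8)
The plan is to strip both $F_k$ and $F_{k+1}$ of their derivative terms using the recurrence relations for Bessel functions, so that the two equations $F_k(\alpha)=0$ and $F_{k+1}(\alpha)=0$ become linear in the same pair of ``cross products'' of Bessel values. For $F_k$ this reduction is already carried out in the proof of Theorem~\ref{thm:alpha kl}, where formula~\eqref{A:4} turns~\eqref{Dbis} into
\begin{equation*}
  F_k(\alpha)
  = \alpha\, J_k\Bigl(\tfrac{\kappa}{\alpha}\Bigr)\, J_{k+1}(\alpha)
  - \tfrac{\kappa}{\alpha}\, J_k(\alpha)\, J_{k+1}\Bigl(\tfrac{\kappa}{\alpha}\Bigr).
\end{equation*}
For $F_{k+1}$ I would instead feed the \emph{lowering} recurrence $z J'_{k+1}(z) = -(k+1) J_{k+1}(z) + z J_k(z)$ into the definition~\eqref{Dbis} of $F_{k+1}$: the terms carrying $J'_{k+1}$ cancel and one is left with
\begin{equation*}
  F_{k+1}(\alpha)
  = \tfrac{\kappa}{\alpha}\, J_k\Bigl(\tfrac{\kappa}{\alpha}\Bigr)\, J_{k+1}(\alpha)
  - \alpha\, J_k(\alpha)\, J_{k+1}\Bigl(\tfrac{\kappa}{\alpha}\Bigr).
\end{equation*}
Setting $P := J_k(\kappa/\alpha)\, J_{k+1}(\alpha)$ and $Q := J_k(\alpha)\, J_{k+1}(\kappa/\alpha)$, the two vanishing conditions become $\alpha P - \frac{\kappa}{\alpha} Q = 0$ and $\frac{\kappa}{\alpha} P - \alpha Q = 0$.

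Next I would read this as a homogeneous $2\times2$ linear system in $(P,Q)$, whose determinant equals $\frac{\kappa^2}{\alpha^2} - \alpha^2 = (\kappa^2 - \alpha^4)/\alpha^2$. Since $\alpha>0$, this vanishes exactly when $\alpha^2 = \kappa$, i.e.\ $\alpha = \sqrt{\kappa}$, which is excluded by hypothesis. The system is therefore non-degenerate and its only solution is the trivial one, $P = Q = 0$; that is,
\begin{equation*}
  J_k\Bigl(\tfrac{\kappa}{\alpha}\Bigr)\, J_{k+1}(\alpha) = 0
  \quad\text{and}\quad
  J_k(\alpha)\, J_{k+1}\Bigl(\tfrac{\kappa}{\alpha}\Bigr) = 0.
\end{equation*}

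I would then run through the four ways these two products can vanish. The two ``mixed'' possibilities, $\{J_k(\kappa/\alpha)=0,\ J_{k+1}(\kappa/\alpha)=0\}$ and $\{J_k(\alpha)=0,\ J_{k+1}(\alpha)=0\}$, would require $\kappa/\alpha$ or $\alpha$ to be a common positive zero of $J_k$ and $J_{k+1}$; this is impossible by the strict interlacing of the zeros of consecutive Bessel functions (see~\eqref{A:9}), so both are discarded. The two surviving cases are precisely the two bullets: either $J_k(\alpha)=0=J_k(\kappa/\alpha)$, giving $\alpha = j_{k,n}$ and $\kappa/\alpha = j_{k,m}$, or $J_{k+1}(\alpha)=0=J_{k+1}(\kappa/\alpha)$, giving $\alpha = j_{k+1,n}$ and $\kappa/\alpha = j_{k+1,m}$; in each case $m\ne n$, since $m=n$ would force $\alpha = \kappa/\alpha = \sqrt{\kappa}$, against the standing assumption. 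For the converse, I would observe that in either bullet \emph{both} cross products vanish — in the first because the two factors of order $k$ are zero, in the second because the two factors of order $k+1$ are zero — so that $F_k = \alpha P - \frac{\kappa}{\alpha}Q$ and $F_{k+1} = \frac{\kappa}{\alpha}P - \alpha Q$ are both zero.

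The computation is almost entirely routine; the one genuinely delicate point is obtaining the derivative-free form of $F_{k+1}$, which hinges on choosing the lowering recurrence $z J'_{k+1} = -(k+1)J_{k+1} + z J_k$ rather than the raising one used for $F_k$, so that both functions end up expressed through the \emph{same} pair $P$, $Q$. Once that symmetry is exposed, the non-vanishing of the determinant away from $\alpha = \sqrt{\kappa}$ does all the work and the common-zero exclusion is an immediate consequence of interlacing.
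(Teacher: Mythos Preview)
Your proof is correct and follows essentially the same route as the paper: both arguments first rewrite $F_k$ and $F_{k+1}$ in the derivative-free forms
\[
F_k(\alpha)=\alpha P-\tfrac{\kappa}{\alpha}Q,\qquad
F_{k+1}(\alpha)=\tfrac{\kappa}{\alpha}P-\alpha Q,
\]
with $P=J_k(\kappa/\alpha)J_{k+1}(\alpha)$ and $Q=J_k(\alpha)J_{k+1}(\kappa/\alpha)$, and then use $\alpha\ne\sqrt\kappa$ together with the interlacing of zeros of $J_k$ and $J_{k+1}$. The only cosmetic difference is that the paper substitutes $F_k=0$ into $F_{k+1}$ to get $F_{k+1}=\frac{\kappa^2-\alpha^4}{\kappa\alpha}P$ and then splits on the two factors of $P$, whereas you read the pair as a non-singular $2\times2$ system and obtain $P=Q=0$ in one stroke before doing a four-way case split; the content is the same.
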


\begin{proof}
  First recall that, using the identity \eqref{A:4}, we find that
  \begin{equation}
    \label{Fkequiv}
    F_k(\alpha)
    = \alpha J_{k}\Bigl(\frac{\kappa}{\alpha}\Bigr)  J_{k+1}(\alpha )
    -\frac{\kappa}{\alpha} J_{k}(\alpha)
    J_{k+1} \Bigl(\frac{\kappa}{\alpha} \Bigr).
  \end{equation}
  If instead one uses the identity \eqref{A:3}, we have
  \begin{equation}
    \label{Fkequiv2}
    F_{k+1}(\alpha)
    = \frac{\kappa}{\alpha}
    J_{k}\Bigl(\frac{\kappa}{\alpha}\Bigr) J_{k+1}(\alpha )
    - \alpha J_{k}(\alpha) J_{k+1} \Bigl(\frac{\kappa}{\alpha} \Bigr).
  \end{equation}

  \noindent
  ($\Leftarrow$)\ If 
	$\alpha =  j_{k,n}$ and  $\kappa / \alpha = j_{k,m}$, one easily see
  that $F_k(\alpha) = 0 = F_{k+1}(\alpha)$.
  A similar argument establish this implication when $\alpha =  j_{k+1,n}$ and  $\kappa / \alpha = j_{k+1,m}$.

  \smallskip\noindent %
  ($\Rightarrow$) Now let us prove that, if $F_k(\alpha) = 0 =
  F_{k+1}(\alpha)$ for some $0 < \alpha \ne \sqrt{\kappa}$, then
  $\alpha$ and $\kappa/\alpha$ have the desired values.  In view of
  \eqref{Fkequiv}--\eqref{Fkequiv2}, if $F_k(\alpha) = 0$, one can
  write
  \begin{equation*}
    0 =
    F_{k+1}(\alpha) = \frac{\kappa^2 - \alpha^4}{\kappa \alpha}
    J_{k+1}(\alpha) J_{k}\Bigl(\frac{\kappa}{\alpha}\Bigr).
  \end{equation*}
  Two cases can occur:
  \begin{itemize}
  \item $\alpha$ is a root of $J_{k+1}$, i.e., $\alpha = j_{k+1,n}$ for
    some $n$.  Since the zeros of $J_k$ and $J_{k+1}$
    interlace (see \eqref{A:9}), $J_k(\alpha) \ne 0$.
    Then, using the fact that $F_k(\alpha) = 0$, one deduces that
    $\kappa / \alpha$ is also a root of $J_{k+1}$, say $j_{k+1,m}$ for
    some $m$.  As $\alpha \ne \sqrt{\kappa}$, one has $n \ne m$.
  \item $\kappa / \alpha$ is a root of $J_k$.  A reasoning similar to
    the first case then shows that $\alpha$ is also a root of $J_k$
    and the conclusion readily follows.
    \qedhere
  \end{itemize}
\end{proof}

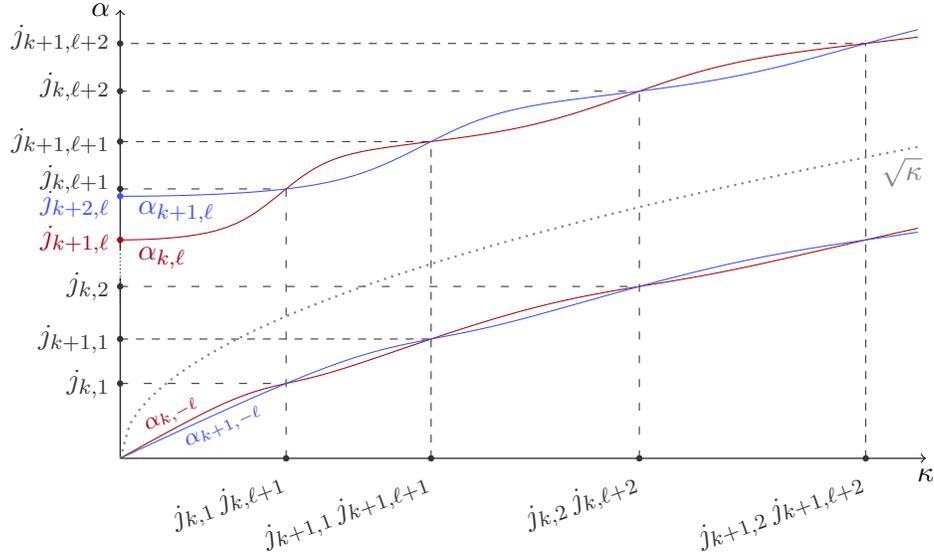
\begin{figure}
  \centering
  \begin{tikzpicture}[x=0.64ex, y=2.5ex]
    \draw[->] (0,0) -- (101, 0) node[below]{$\kappa$};
    \draw (0,0) -- (0, 5.8);
    \draw[densely dotted] (0, 5) -- (0, 7);
    \draw[->] (0, 6.7) -- (0, 14.4) node[left]{$\alpha$};
    \begin{scope}[color=black!80]
      \draw[fill] (0, 2.4048) circle(1pt);
      \draw[loosely dashed] (0,2.4048) node[left]{$j_{k,1}$} -- +(20.8107,0);
      \draw[fill] (0, 5.52008) circle(1pt);
      \draw[loosely dashed] (0,5.52008) node[left]{$j_{k,2}$} -- +(65.0902,0);

      \draw[fill] (20.8107, 0) circle(1pt);
      \draw[fill] (0, 8.65372) circle(1pt);
      \draw[loosely dashed] (20.8107, 0)
      node[below left, xshift=1ex]{\rotatebox{20}{$j_{k,1}\,j_{k,\ell+1}$}}
      -- (20.8107, 8.65372);
      \draw[loosely dashed]
      (0, 8.65372) node[left, yshift=0.8ex]{$j_{k,\ell+1}$} -- +(20.8107, 0);

      \draw[fill] (65.0902, 0) circle(1pt);
      \draw[fill] (0, 11.79153) circle(1pt);
      \draw[loosely dashed] (65.0902, 0)
      node[below left, xshift=1ex]{\rotatebox{20}{$j_{k,2}\,j_{k,\ell+2}$}}
      -- (65.0902, 11.79153)
      -- (0, 11.79153) node[left, yshift=0.4ex]{$j_{k,\ell+2}$};
    \end{scope}
    \begin{scope}[color=black!80]
      \draw[fill] (0, 3.8317) circle(1pt);
      \draw[dashed] (0, 3.8317) node[left]{$j_{k+1,1}$} -- +(38.9817, 0);

      \draw[fill] (38.9817, 0) circle(1pt);
      \draw[fill] (0, 10.17347) circle(1pt);
      \draw[dashed] (38.9817, 0)
      node[below left, xshift=1ex]{\rotatebox{20}{$j_{k+1,1}\,j_{k+1,\ell+1}$}}
      -- (38.9817, 10.17347);
      \draw[dashed]
      (0, 10.17347) node[left]{$j_{k+1,\ell+1}$} -- +(38.9817, 0);

      \draw[fill] (93.47352, 0) circle(1pt);
      \draw[fill] (0, 13.32369) circle(1pt);
      \draw[dashed] (93.47352, 0)
      node[below left, xshift=1ex]{\rotatebox{20}{$j_{k+1,2}\,j_{k+1,\ell+2}$}}
      -- (93.47352, 13.32369)
      -- (0, 13.32369) node[left, yshift=0.4ex]{$j_{k+1,\ell+2}$};
    \end{scope}
    \draw[darkred,fill] (0, 7.01559) circle(1pt) node[left]{$j_{k+1,\ell}$};
    \draw[darkblue,fill] (0, 8.417244) circle(1pt)
    node[left, yshift=-0.7ex]{$j_{k+2,\ell}$};
    \node[color=gray, below] at (98, 10.) {$\sqrt{\kappa}$};
    \clip (0,0) rectangle (100, 14);
    \draw[color=gray, dotted, thick] plot file{data/sqrt_kappa.dat};
    \begin{scope}[color=darkred]
      \draw plot file{data/alpha_0_2.dat};
      \node[below right] at (1, 7.1) {$\alpha_{k,\ell}$};
      \draw plot file{data/alpha_0_-2.dat};
      \node[rotate=28] at (6.5, 1.4) {\footnotesize $\alpha_{k,-\ell}$};
    \end{scope}
    \begin{scope}[color=darkblue]
      \draw plot file{data/alpha_1_2.dat};
      \node[right] at (1, 7.9) {$\alpha_{k+1,\ell}$};
      \draw plot file{data/alpha_1_-2.dat};
      \node[rotate=25] at (13, 1) {\footnotesize $\alpha_{k+1,-\ell}$};
    \end{scope}
  \end{tikzpicture}
  \vspace{-1ex}%
  \caption{Mapping of intervals by $\alpha_{k,\ell}$.}
  \label{figure 2}
  \label{fig:map-alpha-kl}
\end{figure}

\begin{Lem}
  \label{valalpha}
  For all $k\in \IN$, $\ell \ge 1$ and  $n\ge 1$, we have
  \begin{alignat*}{2}
    \alpha_{k,\ell}(j_{k,n}\,j_{k,\ell+n}) &= j_{k,\ell+n}
    &&= \alpha_{k+1,\ell}(j_{k,n}\,j_{k,\ell+n}),\\
    \alpha_{k,-\ell}(j_{k,n}\,j_{k,\ell+n}) & = j_{k,n}
    &&= \alpha_{k+1,-\ell}(j_{k,n}\,j_{k,\ell+n}),
    \\
    \alpha_{k,\ell}(j_{k+1,n}\,j_{k+1,\ell+n})  & = j_{k+1,\ell+n}
    &&= \alpha_{k+1,\ell}(j_{k+1,n}\,j_{k+1,\ell+n}),\\
    \alpha_{k,-\ell}(j_{k+1,n}\,j_{k+1,\ell+n}) &= j_{k+1,n}
    &&= \alpha_{k+1,-\ell}(j_{k+1,n}\,j_{k+1,\ell+n}) .
  \end{alignat*}
\end{Lem}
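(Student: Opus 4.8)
The plan is to exploit a rigidity (``synchronization'') phenomenon that pins down the index of the common roots produced by Lemma~\ref{common root}. Fix a Bessel order $\mu\in\{k,k+1\}$ and one of the curves $\kappa\mapsto\alpha_{k',\ell}(\kappa)$ with $k'\in\{k,k+1\}$. The first observation I would record is that this curve can pass through a zero of $J_\mu$ only at the instants where its partner $\alpha_{k',-\ell}=\kappa/\alpha_{k',\ell}$ also sits on a zero of $J_\mu$. Indeed, whenever $\alpha_{k',\ell}(\kappa)=j_{\mu,m}$, one of the product forms \eqref{Dbis}, \eqref{Fkequiv}, \eqref{Fkequiv2} of $F_{k'}$ collapses, the term carrying $J_\mu(\alpha)$ vanishing, to a single term of the shape $(\text{const})\cdot J_\mu(\kappa/\alpha)\cdot J_{\mu'}(j_{\mu,m})$ in which the factor $J_{\mu'}(j_{\mu,m})$ is nonzero by simplicity of the zeros and the interlacing \eqref{A:9}. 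Hence $F_{k'}(j_{\mu,m})=0$ forces $J_\mu(\kappa/\alpha)=0$, i.e.\ $\alpha_{k',-\ell}(\kappa)$ is a zero of $J_\mu$ too; the converse is identical. One uses \eqref{Dbis} on the ``diagonal'' pairs $(k',\mu)=(k,k),(k+1,k+1)$ and \eqref{Fkequiv}--\eqref{Fkequiv2} on the mixed pairs $(k,k+1),(k+1,k)$.

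Next I would turn monotonicity and the known limits into a counting device. By Lemma~\ref{alpha increase} each $\alpha_{k',\pm\ell}$ is continuous and strictly increasing in $\kappa$; by Lemma~\ref{kappa->0}, $\alpha_{k',\ell}(\kappa)\to j_{k'+1,\ell}$ and $\alpha_{k',-\ell}(\kappa)\to 0$ as $\kappa\to0^+$; and by Lemma~\ref{infinity} both tend to $+\infty$ as $\kappa\to+\infty$. Therefore $\alpha_{k',\ell}$ meets the zeros of $J_\mu$ lying above its initial value one at a time and in increasing order, while $\alpha_{k',-\ell}$ meets $j_{\mu,1},j_{\mu,2},\dots$ in order starting from $0$. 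The synchronization of the first step says these two discrete sets of ``crossing values'' of $\kappa$ coincide; writing the common set increasingly as $\rho_1<\rho_2<\cdots$ and matching the $p$th crossings yields, at $\kappa=\rho_p$,
\begin{equation*}
  \alpha_{k',\ell}(\rho_p)=j_{\mu,\,m_*+p-1},
  \qquad
  \alpha_{k',-\ell}(\rho_p)=j_{\mu,p},
\end{equation*}
where $j_{\mu,m_*}$ is the smallest zero of $J_\mu$ strictly above the starting value $j_{k'+1,\ell}$; in particular $\rho_p=\alpha_{k',\ell}(\rho_p)\,\alpha_{k',-\ell}(\rho_p)=j_{\mu,p}\,j_{\mu,m_*+p-1}$.

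It then remains to check that $m_*=\ell+1$ in each case, after which setting $p=n$ reads off the four identities. For $(k',\mu)=(k,k)$ and $(k+1,k+1)$ this is the ordinary interlacing $j_{\mu,\ell}<j_{k'+1,\ell}<j_{\mu,\ell+1}$ of \eqref{A:9}. For $(k,k+1)$ it is immediate, the starting value $j_{k+1,\ell}$ being itself the $\ell$th zero of $J_{k+1}$, so the first one strictly above is $j_{k+1,\ell+1}$. The remaining pair $(k+1,k)$, which supplies the $\alpha_{k+1,\pm\ell}$ entries at $\kappa=j_{k,n}\,j_{k,\ell+n}$, is the only one needing an extra ingredient: one must know $j_{k,\ell}<j_{k+2,\ell}<j_{k,\ell+1}$ so that the starting value $j_{k+2,\ell}$ of $\alpha_{k+1,\ell}$ lies in $\intervaloo{j_{k,\ell},j_{k,\ell+1}}$.

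The main obstacle is precisely this last inequality, i.e.\ the interlacing of the zeros of $J_k$ and $J_{k+2}$, which is less elementary than \eqref{A:9} and does not follow from it by mere iteration; I would invoke the classical fact that the positive zeros of $J_\mu$ and $J_\nu$ interlace whenever $\abs{\mu-\nu}\le 2$, or establish $j_{k+2,\ell}<j_{k,\ell+1}$ directly. A secondary, purely bookkeeping difficulty is to justify rigorously that between two consecutive crossings each curve stays strictly between the relevant consecutive zeros of $J_\mu$, so that it advances by exactly one zero at each $\rho_p$ and the two crossing sequences, equal as sets, are equal term by term; here the strict monotonicity of Lemma~\ref{alpha increase} together with the synchronization is what does the work, and the outcome is consistent with (and sharpens) Lemma~\ref{common root}.
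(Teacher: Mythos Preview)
Your proposal is correct and follows essentially the same route as the paper: synchronization of the zero-crossings of $\alpha_{k',\ell}$ and $\alpha_{k',-\ell}$ with respect to $J_\mu$ via the structure of $F_{k'}$, then monotonicity plus the known limits to order and match those crossings, and finally identification of the starting index $m_*$ in each of the four $(k',\mu)$ cases. The only substantive difference is that you defer the interlacing $j_{k,\ell}<j_{k+2,\ell}<j_{k,\ell+1}$ to a classical citation, whereas the paper proves it inline by computing, from the recurrence \eqref{A:1}--\eqref{A:4}, that $J_{k+2}(j_{k,\ell'})\,J_{k+2}(j_{k,\ell'+1})<0$; a minor stylistic difference is that the paper interleaves the $J_k$- and $J_{k+1}$-crossings into a single odd/even-indexed sequence rather than treating each $\mu$ separately as you do.
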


\begin{proof}
  Let us first deal with $\alpha_{k,\pm\ell}$ (left equalities).
  As $\alpha_{k,\ell}$ is continuous, increasing and satisfies
  $\alpha_{k,\ell}(\intervaloo{0,+\infty}) = \intervaloo{j_{k+1,\ell},+\infty}$,
  there exists an increasing sequence $(\kappa_{n})_{n \ge 1}$
  such that, for all~$n \ge 1$,
  \begin{equation*}
    \alpha_{k,\ell}(\kappa_{2n-1}) = j_{k,\ell+n}
    \quad\text{and}\quad
    \alpha_{k,\ell}(\kappa_{2n}) = j_{k+1,\ell+n}.
  \end{equation*}
  In the same way, since $\alpha_{k,-\ell}$ is increasing and
  $\alpha_{k,-\ell}(\intervaloo{0,+\infty}) = \intervaloo{0,+\infty}$,
  there exists an increasing sequence $(\tilde\kappa_{n})_{n \ge
    1}$ such that, for all~$n \ge 1$,
  \begin{equation*}
    \alpha_{k,-\ell}(\tilde\kappa_{2n-1})=j_{k,n}
    \quad\text{and}\quad
    \alpha_{k,-l}(\tilde\kappa_{2n})=j_{k+1,n}.
  \end{equation*}
  By~\eqref{Fkequiv}, for all roots $\alpha$ of $F_k$, $J_k(\alpha) = 0$ if and only if $J_k(\kappa/\alpha) = 0$, and so
  $\{\kappa_{2n-1}\mid n\ge 1\} = \{\tilde\kappa_{2n-1}\mid n\ge 1\}$.
  Similarly, $\{\kappa_{2n}\mid n\ge 1\} = \{\tilde\kappa_{2n}\mid
  n\ge 1\}$.  Since the sequences are increasing, we conclude that,
  for all $n \ge 1$, $\kappa_{n} = \tilde\kappa_{n}$.  Moreover, we
  deduce from $\kappa = \alpha_{k,-\ell}(\kappa)
  \alpha_{k,\ell}(\kappa)$ that, for all $n\ge 1$,
  $\kappa_{2n-1} = j_{k,n}\,j_{k,\ell+n}$ and
  $\kappa_{2n} = j_{k+1,n}\,j_{k+1,\ell+n}$.
This proves the left hand equalities.
  \medbreak

  To prove the equalities to the right, let us first show that
  $j_{k+2,\ell} < j_{k,\ell+1}$.
  For this, it is enough to prove that
  $J_{k+2}(j_{k,\ell'})$ and $J_{k+2}(j_{k,\ell'+1})$ have opposite
  signs for any $\ell' = 1,\dotsc, \ell$.
  Using the relations \eqref{A:1} and \eqref{A:4}, we obtain
  \begin{equation*}
    \begin{split}
      J_{k+2}(z)
      &= \frac{2(k+1)}{z} J_{k+1}(z)-J_k(z)   \\
      &= \frac{2(k+1)}{z} \Bigl( \frac{k}{z} J_k(z)
      - J_k'(z) \Bigr) - J_k(z)\\
      &= \Bigl( \frac{2(k+1)k}{z^2} - 1 \Bigr) J_k(z)
      - \frac{2(k+1)}{z} J_k'(z).
    \end{split}
  \end{equation*}
  Therefore
  $$
    J_{k+2}(j_{k,\ell'}) \, J_{k+2}(j_{k,\ell'+1})
    = 4(k+1)^2  
    \frac{J_k'(j_{k,\ell'}) J_k'(j_{k,\ell'+1})}{j_{k,\ell'} \, j_{k,\ell'+1}}
  $$
	which is negative because $j_{k,\ell'}$ and $j_{k,\ell'+1}$ are two
  consecutive simple roots of $J_k$.
	
  In a similar way to the first part, as $\alpha_{k+1,\ell}$ and
  $\alpha_{k+1,-\ell}$ are increasing and satisfy
  $\alpha_{k+1,\ell}(\intervaloo{0,+\infty}) =
  \intervaloo{j_{k+2,\ell},+\infty}$ and
  $\alpha_{k+1,-\ell}(\intervaloo{0,+\infty}) =
  \intervaloo{0,+\infty}$ and as
  $j_{k,\ell+1} > j_{k+2,\ell} > j_{k+1,\ell} > j_{k,\ell}$,
  there exist increasing sequences
  $(\kappa_{n})_{n \ge 1}$ and $(\tilde\kappa_{n})_{n \ge 1}$ such
  that, for all $n \ge 1$,
  \begin{align*}
    \alpha_{k+1,\ell}(\kappa_{2n-1}) &= j_{k,\ell+n}
    &&\text{and}&
    \alpha_{k+1,\ell}(\kappa_{2n}) &= j_{k+1,\ell+n},
    \\
    \alpha_{k+1,-\ell}(\tilde\kappa_{2n-1}) &= j_{k,n}
    &&\text{and}&
    \alpha_{k+1,-\ell}(\tilde\kappa_{2n}) &= j_{k+1,n}.
  \end{align*} 
  Now, using \eqref{Fkequiv2} and arguing as above,
  we obtain that
  $\{\kappa_{2n-1}\mid n\ge 1\} = \{\tilde\kappa_{2n-1}\mid n\ge 1\}$
  and $\{\kappa_{2n}\mid n\geq1\}=\{\tilde\kappa_{2n}\mid n\geq1\}$.
  As the sequences are increasing, one deduces that
  $\kappa_{n}=\tilde\kappa_{n}$  for all $n$.
  Finally, from
  $\kappa = \alpha_{k+1,\ell}(\kappa) \alpha_{k+1,-\ell}(\kappa)$, one
  gets that,
  for all $n\ge 1$, $\kappa_{2n-1} = j_{k,n}\,j_{k,\ell+n}$ and
  $\kappa_{2n} = j_{k+1,n}\,j_{k+1,\ell+n}$.
\end{proof}

\begin{Rem}
  \label{interlace k k+2}
  This proof establishes that the positive roots of $J_k$ and
  $J_{k+2}$ interlace (see also~\cite[Theorem~1]{Palmai2012}), namely
  \begin{equation*}
    \forall \ell \ge 1,\qquad
    j_{k,\ell} < j_{k+2,\ell} < j_{k,\ell+1}
  \end{equation*}
  (the first inequality comes from $j_{k,\ell} < j_{k+1, \ell}
  < j_{k+2,\ell}$).
\end{Rem}

\begin{Rem} 
  \label{Range of alpha_k1}
  As a byproduct of the proof, one gets that, for $\ell \ge 1$
  and $n \ge 1$,
  \begin{equation}
    \label{eq:ineq prod jkl}
    j_{k,n} j_{k,\ell+n} < j_{k+1,n} j_{k+1,\ell+n}
    < j_{k,n+1} j_{k,\ell+n+1} 
    .
  \end{equation}
  Since the functions $\alpha_{k,\ell}$ are increasing, one
  immediately deduces that
  \begin{align}
    \alpha_{k,\ell}\bigl(\intervaloo{0,j_{k,1}j_{k,\ell+1}}\bigr)
    &= \intervaloo{j_{k+1,\ell}, j_{k,\ell+1} } ,
    \notag\\
		\alpha_{k+1,\ell}\bigl(\intervaloo{0,j_{k,1}j_{k,\ell+1}}\bigr)
    &= \intervaloo{j_{k+2,\ell}, j_{k,\ell+1} } ,
    \displaybreak[1]
    \notag\\
    \alpha_{k,\ell}(I)
    = \alpha_{k+1,\ell}(I)
    &= \intervaloo{j_{k,\ell+n}, j_{k+1,\ell+n}} \notag\\
    &\qquad\text{where } I =
    \intervaloo{j_{k,n} j_{k,\ell+n},\, j_{k+1,n}j_{k+1,\ell+n}} ,
    \notag\\
    \alpha_{k,\ell}(I)
    = \alpha_{k+1,\ell}(I)
    &= \intervaloo{j_{k+1,\ell+n}, j_{k,\ell+n+1}}  \notag\\
    &\qquad\text{where } I =
    \intervaloo{j_{k+1,n} j_{k+1,\ell+n},\, j_{k,n+1} j_{k,\ell+n+1}}  .
    \notag \\
    \intertext{%
      Note further that these properties also yield
      (see figure~\ref{figure 2})}
    %
    \label{serge:25/02:1}
    \alpha_{k,\ell}(I)
    = \alpha_{k+1,\ell}(I)
    &= \intervaloo{j_{k,\ell+n}, j_{k,\ell+n+1}}\\
    &\qquad\text{where } I = \intervaloo{j_{k,n} j_{k,\ell+n},\,
      j_{k,n+1} j_{k,\ell+n+1}}   \notag
  \end{align}
  for all $n \ge 0$, with the convention that $j_{k,0} := 0$.
In the same way, we have
  \begin{align}
    \alpha_{k,-\ell}\bigl(\intervaloo{0,j_{k,1}j_{k,\ell+1}}\bigr)
    &= \intervaloo{0, j_{k,1} } ,
    \notag\\
		\alpha_{k+1,-\ell}\bigl(\intervaloo{0,j_{k,1}j_{k,\ell+1}}\bigr)
    &= \intervaloo{0, j_{k,1} } ,
    \displaybreak[1] \notag\\
    \alpha_{k,-\ell}(I)
    = \alpha_{k+1,-\ell}(I)
    &= \intervaloo{j_{k,n}, j_{k+1,n}}  \notag\\
    &\qquad\text{where } I =
    \intervaloo{j_{k,n} j_{k,\ell+n},\, j_{k+1,n}j_{k+1,\ell+n}} ,
    \displaybreak[1] \notag\\
    \alpha_{k,-\ell}(I)
    = \alpha_{k+1,-\ell}(I)
    &= \intervaloo{j_{k+1,n}, j_{k,n+1}} \notag\\
    &\qquad\text{where } I =
    \intervaloo{j_{k+1,n} j_{k+1,\ell+n},\, j_{k,n+1} j_{k,\ell+n+1}}  .
    \notag\\
    \intertext{and, in particular, for all $n \ge 0$,}
    %
    \label{serge:25/02:2}
    \alpha_{k,-\ell}(I)
    = \alpha_{k+1,-\ell}(I)
    &= \intervaloo{j_{k,n}, j_{k,n+1}} \\
    &\qquad\text{where } I = \intervaloo{j_{k,n} j_{k,\ell+n},\,
      j_{k,n+1} j_{k,\ell+n+1}}.  \notag
  \end{align}
\end{Rem}

\begin{Prop}
  \label{intersa01,a11}
  Let $k \in \IN$ and $\ell \ge 1$.
  The set of $\kappa$ such that
  $\alpha_{k,\ell} (\kappa) = \alpha_{k+1,\ell} (\kappa)$ is
  \begin{equation*}
    \{j_{k,n}j_{k,\ell +n} \mid n\ge 1\}
    \cup
    \{j_{k+1,n}j_{k+1,\ell+n}\mid n\ge 1\}.
  \end{equation*}
\end{Prop}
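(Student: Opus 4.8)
The plan is to establish the two inclusions separately. The inclusion $\supseteq$ requires no new work: it is exactly what the left-hand and right-hand equalities of Lemma~\ref{valalpha} assert, since for every $n \ge 1$ they give $\alpha_{k,\ell}(j_{k,n}j_{k,\ell+n}) = j_{k,\ell+n} = \alpha_{k+1,\ell}(j_{k,n}j_{k,\ell+n})$ and $\alpha_{k,\ell}(j_{k+1,n}j_{k+1,\ell+n}) = j_{k+1,\ell+n} = \alpha_{k+1,\ell}(j_{k+1,n}j_{k+1,\ell+n})$. Hence both displayed sets consist of coincidence points of $\alpha_{k,\ell}$ and $\alpha_{k+1,\ell}$.

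For the reverse inclusion, I would fix $\kappa > 0$ with $\alpha_{k,\ell}(\kappa) = \alpha_{k+1,\ell}(\kappa) =: \alpha$ and note that $\alpha$ is then a common positive root of $F_k$ and $F_{k+1}$. Because $\ell \ge 1$, Theorem~\ref{thm:alpha kl} guarantees $\alpha = \alpha_{k,\ell}(\kappa) > \sqrt{\kappa}$, so in particular $\alpha \ne \sqrt{\kappa}$ and Lemma~\ref{common root} applies. It yields positive integers $m \ne n$ such that either $\alpha = j_{k,n}$ with $\kappa/\alpha = j_{k,m}$, or $\alpha = j_{k+1,n}$ with $\kappa/\alpha = j_{k+1,m}$. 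In both cases $\alpha > \sqrt{\kappa} > \kappa/\alpha$ forces the index $n$ of $\alpha$ to exceed the index $m$ of $\kappa/\alpha$, since the positive zeros of a fixed Bessel function are strictly increasing in the index.

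It then remains to upgrade the mere relation $n \ne m$ coming from Lemma~\ref{common root} to the precise equality $n = \ell + m$, which is the crux of the argument and where I would use the strict monotonicity of $\alpha_{k,\ell}$ from Lemma~\ref{alpha increase}. Treating the first case, since $\alpha_{k,\ell}$ has range $\intervaloo{j_{k+1,\ell}, +\infty}$ and $\alpha_{k,\ell}(\kappa) = j_{k,n}$, the interlacing $j_{k,\ell} < j_{k+1,\ell} < j_{k,\ell+1}$ forces $n > \ell$, so $n - \ell \ge 1$. Applying Lemma~\ref{valalpha} with the index $n - \ell$ gives $\alpha_{k,\ell}(j_{k,n-\ell}\,j_{k,n}) = j_{k,n}$; since $\alpha_{k,\ell}$ is strictly increasing, hence injective, this $\kappa$ is the unique preimage of $j_{k,n}$, whence $\kappa = j_{k,n-\ell}\,j_{k,n}$. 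Comparing with $\kappa = j_{k,m}\,j_{k,n}$ from Lemma~\ref{common root} yields $m = n - \ell$, so that $\kappa = j_{k,m}\,j_{k,\ell+m}$ belongs to the first set. The second case is identical with $j_{k}$ replaced by $j_{k+1}$, landing $\kappa = j_{k+1,m}\,j_{k+1,\ell+m}$ in the second set. The main obstacle is precisely this last identification: Lemma~\ref{common root} only distinguishes the two families and records $m \ne n$, and it is the injectivity of $\alpha_{k,\ell}$ together with the explicit evaluation in Lemma~\ref{valalpha} that pins the gap $n - m$ down to exactly $\ell$.
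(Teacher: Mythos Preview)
Your argument is correct and follows the same route as the paper's proof: the inclusion $\supseteq$ via Lemma~\ref{valalpha}, and the reverse inclusion by applying Lemma~\ref{common root} to identify $\alpha$ as a Bessel zero, using the range $\alpha_{k,\ell}(\intervaloo{0,+\infty}) = \intervaloo{j_{k+1,\ell},+\infty}$ to force the index to exceed $\ell$, and then invoking the injectivity of $\alpha_{k,\ell}$ together with Lemma~\ref{valalpha} to pin down~$\kappa$. The paper's version is more terse (it does not track both indices $m,n$ explicitly and uses $\alpha_{k+1,\ell} > j_{k+2,\ell}$ for the index bound), but the logical structure is the same.
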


\begin{proof}  
  By Lemma \ref{valalpha}, we know that the elements of the set
  $\{j_{k,n}j_{k,\ell +n} \mid n\ge 1\}  \cup
  \{j_{k+1,n}j_{k+1,\ell+n}\mid n\ge 1\}$ are equality points
  of $\alpha_{k,\ell}$ and $\alpha_{k+1,\ell}$.
\medbreak

  Let us prove that there is no other point where $\alpha_{k,\ell}=
  \alpha_{k+1, \ell}$.  Lemma~\ref{common root} implies that, if $\bar
  \kappa$ is such a point, then $\alpha_{k,\ell}(\bar \kappa) =
  \alpha_{k+1,\ell}(\bar \kappa) = j_{k,m}$ or $\alpha_{k,\ell}(\bar \kappa) =
  \alpha_{k+1,\ell}(\bar \kappa) = j_{k+1,m}$ for some positive integer $m$.
  In either case, $m > \ell$ because, by Lemmas \ref{alpha increase} and \ref{kappa->0}, $\alpha_{k+1,\ell} >
  j_{k+2,\ell} > j_{k+1,\ell} > j_{k,\ell}$.
  Since $\alpha_{k,\ell}$ is increasing, hence injective,
  $\bar\kappa$ must then necessarily be one of the values given by
  Lemma~\ref{valalpha}.
\end{proof}

We will need also the following result in the next section.

\begin{Lem}
  \label{lserge25:02}%
  Let $k\in \IN$ and $\ell \in \IN \setminus\{0\}$ be fixed.
  Then the function
  $\displaystyle
  g_{k,\ell}:\intervaloo{0,+\infty} \to \IR: \kappa \mapsto
  \frac{\alpha_{k,\ell}^2(\kappa)}{\kappa}$
  is decreasing.
\end{Lem}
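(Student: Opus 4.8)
The plan is to differentiate $g_{k,\ell}$ and reduce the statement to a pointwise bound on the $\kappa$-derivative of $\alpha_{k,\ell}$. By Lemma~\ref{alpha increase} the function $\alpha_{k,\ell}$ is $\C^1$, hence so is $g_{k,\ell}$, and writing $\alpha := \alpha_{k,\ell}(\kappa)$ one computes
\[
  g_{k,\ell}'(\kappa)
  = \frac{2\alpha\,(\partial_\kappa\alpha_{k,\ell})\,\kappa - \alpha^2}{\kappa^2}
  = \frac{\alpha}{\kappa^2}\bigl(2\kappa\,\partial_\kappa\alpha_{k,\ell} - \alpha\bigr).
\]
Since $\alpha > 0$, the sign of $g_{k,\ell}'$ is that of $2\kappa\,\partial_\kappa\alpha_{k,\ell} - \alpha$, so it suffices to prove $\partial_\kappa\alpha_{k,\ell}(\kappa) \le \alpha/(2\kappa)$, with strict inequality outside a discrete set of~$\kappa$.

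I would then invoke the two formulas for $\partial_\kappa\alpha_{k,\ell}$ established in the proof of Lemma~\ref{alpha increase}, setting $a := \alpha$ and $b := \kappa/\alpha$ (so $ab = \kappa$ and, because $\ell > 0$, $a > \sqrt\kappa > b > 0$). At the values of $\kappa$ for which $J_k(a) = 0 = J_k(b)$ that proof gives $\partial_\kappa\alpha_{k,\ell} = a/(2\kappa) = \alpha/(2\kappa)$, so $g_{k,\ell}'(\kappa) = 0$ there; since $\alpha_{k,\ell}$ is strictly increasing, each equation $\alpha_{k,\ell}(\kappa) = j_{k,m}$ has at most one solution, so these $\kappa$ are isolated. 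At every other $\kappa$ one has $J_k(a)\ne 0 \ne J_k(b)$ and the same proof yields, with $p := H_k'(a) < 0$ and $q := H_k'(b) < 0$,
\[
  \partial_\kappa\alpha_{k,\ell} = \frac{q}{bq + ap},
\]
while the relation $\tilde F_k(\alpha,\kappa) = 0$ defining this branch reads $H_k(a) = H_k(b)$. Using $\alpha/(2\kappa) = 1/(2b)$ and clearing denominators, noting that $bq + ap < 0$ (both summands are negative), so that the inequality reverses, the estimate $\partial_\kappa\alpha_{k,\ell} < \alpha/(2\kappa)$ becomes \emph{equivalent} to
\[
  b\,H_k'(b) - a\,H_k'(a) > 0 .
\]

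The crux is to evaluate this last quantity, and the key move is to rewrite $z\,H_k'(z)$ so that the cancellation becomes visible. Multiplying identity~\eqref{eq:tilde Hbis} by $z$ gives $z\,H_k'(z) = -\tilde H_k(z)/J_k^2(z)$, and dividing the definition~\eqref{eq:tilde H} of $\tilde H_k$ by $J_k^2(z)$, together with $H_k(z) = z J_k'(z)/J_k(z)$, produces the clean formula
\[
  z\,H_k'(z) = -\bigl(z^2 - k^2 + H_k(z)^2\bigr).
\]
Hence
\[
  b\,H_k'(b) - a\,H_k'(a)
  = \bigl(a^2 - k^2 + H_k(a)^2\bigr) - \bigl(b^2 - k^2 + H_k(b)^2\bigr)
  = a^2 - b^2 + H_k(a)^2 - H_k(b)^2 .
\]
Because $H_k(a) = H_k(b)$ the last two terms cancel, leaving $a^2 - b^2 > 0$ since $a > b > 0$. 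This gives the strict inequality on the generic set. Combining it with $g_{k,\ell}' = 0$ only on the discrete exceptional set and the continuity of $g_{k,\ell}'$, we get $g_{k,\ell}' \le 0$ everywhere and $g_{k,\ell}' < 0$ on a dense open set, so $g_{k,\ell}$ is strictly decreasing. The one delicate point is precisely this evaluation: without first expressing $z\,H_k'(z)$ through $H_k(z)^2$, the cancellation afforded by $H_k(a) = H_k(b)$ is hidden, and the sign of $b\,H_k'(b) - a\,H_k'(a)$ is not apparent.
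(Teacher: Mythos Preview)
Your proof is correct and follows essentially the same route as the paper's: both differentiate $g_{k,\ell}$, reduce to the sign of $b\,H_k'(b)-a\,H_k'(a)$ (the paper's numerator $N(\kappa)$), and show it equals $a^2-b^2>0$ by exploiting $H_k(a)=H_k(b)$. Your identity $z\,H_k'(z)=-(z^2-k^2+H_k(z)^2)$ makes the cancellation a bit more transparent than the paper's expansion of $N(\kappa)$ in terms of Bessel functions before invoking $F_k(\alpha)=0$, but the two computations are equivalent.
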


\begin{proof}
  Direct calculations show
  \begin{math}
    g_{k,\ell}'(\kappa) = \frac{\alpha_{k,\ell}(\kappa)}{\kappa^2}
    \bigl(2\kappa \alpha_{k,\ell}'(\kappa) - \alpha_{k,\ell}(\kappa)\bigr).
  \end{math}
  Set
  \begin{equation*}
    G(\kappa)
    := 2\kappa \alpha_{k,\ell}'(\kappa)-\alpha_{k,\ell}(\kappa).
  \end{equation*}
  Because $g_{k,\ell}$ is continuous and the intervals
  $\intervaloo{j_{k,n} j_{k,\ell+n},\, j_{k,n+1} j_{k,\ell+n+1}}$
  cover $\intervaloo{0, +\infty}$ except for isolated points,
  it is sufficient to show that, for all $n \ge 0$,
  \begin{equation*}
    \forall \kappa\in \intervaloo{j_{k,n} j_{k,\ell+n},\,
      j_{k,n+1} j_{k,\ell+n+1}},\qquad
    G(\kappa) < 0
  \end{equation*}
  with the convention that $j_{k,0} := 0$.
  For $\kappa\in \intervaloo{j_{k,n} j_{k,\ell+n},\,
    j_{k,n+1} j_{k,\ell+n+1}}$, by \eqref{serge:25/02:1},
  $\alpha_{k,\ell}(\kappa)$ is not a root of $J_k$
  and by the proof of Lemma \ref{alpha increase}, we deduce that
  \begin{equation*}
    G(\kappa)
    = \alpha
    \frac{\frac{\kappa}{\alpha} H_k'(\frac{\kappa}{\alpha})
      -\alpha H_k'(\alpha)}{
      \frac{\kappa}{\alpha} H_k'(\frac{\kappa}{\alpha})+\alpha H_k'(\alpha)},
  \end{equation*}
  where for shortness we have written $\alpha$ instead of
  $\alpha_{k,\ell}(\kappa)$.  Again the fact that
  $\alpha_{k,\ell}(\kappa)$ and
  $\frac{\kappa}{\alpha}=\alpha_{k,-\ell}(\kappa)$ are between two
  consecutive roots of $J_k$ allows to apply Lemma \ref{cserge1} and
  deduce that the above denominator is negative. Hence it remains to
  check that
  \begin{equation*}
    N(\kappa) : =\frac{\kappa}{\alpha}
    H_k'\Bigl(\frac{\kappa}{\alpha}\Bigr)
    - \alpha H_k'(\alpha)  >0,
  \end{equation*}
  for all $\kappa\in \intervaloo{j_{k,n} j_{k,\ell+n},\, j_{k,n+1}
    j_{k,\ell+n+1}}$.  With the help of the identities \eqref{eq:tilde
    H} and \eqref{eq:tilde Hbis}, we may transform $N$ into
  \begin{multline*}
    N(\kappa)
    = \frac{1}{ J_k^2(\alpha) J_k^2(\frac{\kappa}{\alpha})}
    \Bigl[ \Bigl(\alpha^2-\frac{\kappa^2}{\alpha^2} \Bigr)
    J_k^2(\alpha) J_k^2\Bigl(\frac{\kappa}{\alpha}\Bigr)  \\
    + \alpha^2 \left(J_k'(\alpha)\right)^2
    J_k^2\Bigl(\frac{\kappa}{\alpha}\Bigr)
    - \frac{\kappa^2}{\alpha^2}
    \Bigl(J_k'\Bigl(\frac{\kappa}{\alpha}\Bigr)\Bigr)^2
    J_k^2(\alpha) \Bigr].
  \end{multline*}
  As $\alpha$ is a root of \eqref{Dbis}, we arrive at
  \begin{equation*}
    N(\kappa) = \alpha^2-\frac{\kappa^2}{\alpha^2} ,
  \end{equation*}
  which is positive since $\alpha_{k,\ell}(\kappa)>\sqrt{\kappa}$
  when $\ell>0$.
\end{proof}

\begin{Rem}
  Note that, if $\kappa = j_{k,n} j_{k,\ell+n}$ for some $n>0$,
  the second case of Lemma~\ref{valalpha} implies that
  $\alpha_{k,\ell}(\kappa)$ is a root of $J_k$ and so,
  using the proof of Lemma \ref{alpha increase}, we infer
  \begin{equation*}
    \alpha_{k,\ell}'(\kappa)
    = \frac{\alpha_{k,\ell}(\kappa)}{2\kappa},
  \end{equation*}
  which means that $G(\kappa) = 0$.
\end{Rem}

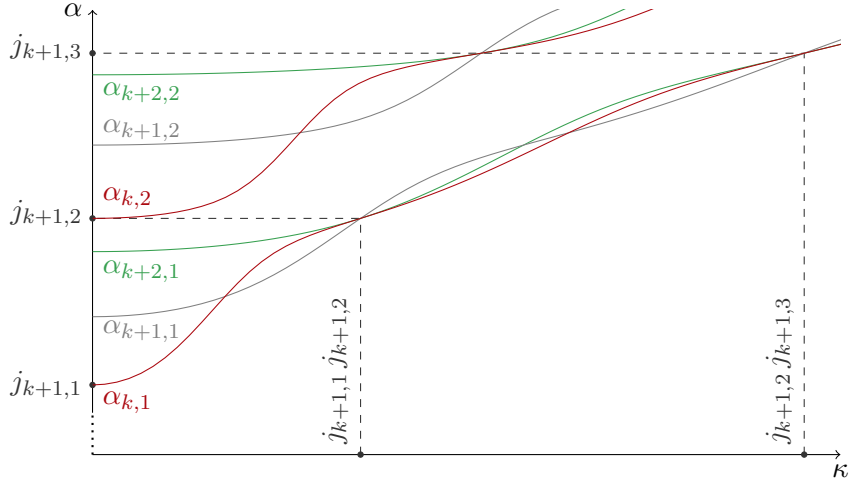
\begin{figure}[ht]
  \centering
  \newcommand{\alphamin}{2.5}
  \begin{tikzpicture}[x=0.8ex, y=4.2ex]
    \draw[->] (0, \alphamin) -- +(75, 0) node[below]{$\kappa$};
    \draw[dotted, thick] (0,\alphamin) -- (0,3.3);
    \draw[->] (0,3.3) -- (0, 11) node[left]{$\alpha$};
    \begin{scope}[color=black!80]
      \draw[fill] (0, 3.831707) circle(1pt) node[left]{$j_{k+1,1}$};

      \draw[fill] (26.88166, \alphamin) circle(1pt);
      \draw[fill] (0, 7.01558) circle(1pt);
      \draw[dashed] (26.88166, \alphamin)
      node[above left]{\rotatebox{90}{$j_{k+1,1}\,j_{k+1,2}$}}
      -- (26.88166, 7.01558)
      -- (0, 7.01558) node[left, yshift=0.4ex]{$j_{k+1,2}$};

      \draw[fill] (71.372847, \alphamin) circle(1pt);
      \draw[fill] (0, 10.173468) circle(1pt);
      \draw[dashed] (71.372847, \alphamin)
      node[above left]{\rotatebox{90}{$j_{k+1,2}\,j_{k+1,3}$}}
      -- (71.372847, 10.173468)
      -- (0, 10.173468) node[left, yshift=0.4ex]{$j_{k+1,3}$};
    \end{scope}
    \clip (0, \alphamin) rectangle (75, 11); 
    \begin{scope}[color=darkgreen]
      \draw plot file{data/alpha_2_1.dat};
      \node[right] at (0, 6.) {$\alpha_{k+2,1}$};
      \draw plot file{data/alpha_2_2.dat};
      \node[below right] at (0, 9.8) {$\alpha_{k+2,2}$};
    \end{scope}
    \begin{scope}[color=gray]
      \draw plot file{data/alpha_1_1.dat};
      \node[right] at (0, 4.85) {$\alpha_{k+1,1}$};
      \draw plot file{data/alpha_1_2.dat};
      \node[right] at (0, 8.7) {$\alpha_{k+1,2}$};
    \end{scope}
    \begin{scope}[color=darkred] 
      \draw plot file{data/alpha_0_1.dat};
      \node[right] at (0, 3.5) {$\alpha_{k,1}$};
      \draw plot file{data/alpha_0_2.dat};
      \node[right] at (0, 7.4) {$\alpha_{k,2}$};
    \end{scope}
  \end{tikzpicture}
  \caption{Illustration of $\alpha_{k,1} \le \alpha_{k+2,1}$}
  \label{fig:alpha-kl-order}
\end{figure}

Until now we have proved that the $\ell$-th curve corresponding to $k$
and $k+1$ cross each other, in particular the first ones, and we have
characterized the crossing points. In Proposition
\ref{mina01 ou a11} we will prove that the first eigenvalue
$\lambda_1(\kappa)$ corresponds to $\min\{\alpha_{0,1}(\kappa),
\alpha_{1,1}(\kappa)\}$ by the relation \eqref{rel_lambda_alpha}. To
this aim, we first prove that the other curves are above these
first two.

\begin{Prop}
  \label{alpha k k+2}
  Let $k \in \IN$.  For all $\kappa > 0$, $\alpha_{k,1}(\kappa) \le
  \alpha_{k+2,1}(\kappa)$.  Moreover, this inequality is an equality if
  and only if $\kappa = j_{k+1,n} j_{k+1,n+1}$ for some $n \ge 1$, in
  which case $\alpha_{k,1}(\kappa)
  = \alpha_{k+1,1}(\kappa) = \alpha_{k+2,1}(\kappa) = j_{k+1,n+1}$.
\end{Prop}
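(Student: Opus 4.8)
The plan is to compare $\alpha_{k,1}$ with $\alpha_{k+2,1}$ by controlling the sign of $F_{k+2}$ \emph{along} the curve $\alpha_{k,1}$. The cornerstone is an algebraic identity linking $F_k$ and $F_{k+2}$. Writing $\beta=\kappa/\alpha$ and combining the two shapes \eqref{Fkequiv}--\eqref{Fkequiv2} of $F$ at consecutive orders with the recurrences \eqref{A:1}, \eqref{A:3}, \eqref{A:4}, I would first establish
\[
  F_{k+2}(\alpha)=F_k(\alpha)+2(k+1)\,\frac{\beta^2-\alpha^2}{\alpha\beta}\,
  J_{k+1}(\alpha)\,J_{k+1}(\beta).
\]
Concretely, I would write $F_{k+2}$ in its \eqref{A:3}-form $\beta J_{k+1}(\beta)J_{k+2}(\alpha)-\alpha J_{k+1}(\alpha)J_{k+2}(\beta)$, substitute $J_{k+2}(z)=\frac{2(k+1)}{z}J_{k+1}(z)-J_k(z)$, and recognize the remaining terms as $F_k(\alpha)$ in its \eqref{A:4}-form \eqref{Fkequiv}. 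Since $\alpha_{k,1}(\kappa)>\sqrt\kappa$, at $\alpha=\alpha_{k,1}$ one has $\beta=\alpha_{k,-1}<\alpha$, hence $\beta^2-\alpha^2<0$; because $F_k(\alpha_{k,1})=0$, the identity collapses to the sign statement $\operatorname{sign}F_{k+2}(\alpha_{k,1})=-\operatorname{sign}\bigl(J_{k+1}(\alpha_{k,1})\,J_{k+1}(\alpha_{k,-1})\bigr)$.

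Next I would determine that sign. Set $s_n:=j_{k+1,n}j_{k+1,n+1}$ for $n\ge1$ and $s_0:=0$. By Lemma~\ref{valalpha} (third and fourth lines, $\ell=1$), $\alpha_{k,1}(s_n)=j_{k+1,n+1}$ and $\alpha_{k,-1}(s_n)=j_{k+1,n}$, so by monotonicity (Lemma~\ref{alpha increase}) one has, on each $\intervaloo{s_n,s_{n+1}}$, that $\alpha_{k,1}\in\intervaloo{j_{k+1,n+1},j_{k+1,n+2}}$ and $\alpha_{k,-1}\in\intervaloo{j_{k+1,n},j_{k+1,n+1}}$. Neither open interval contains a zero of $J_{k+1}$, so the signs of $J_{k+1}(\alpha_{k,1})$ and $J_{k+1}(\alpha_{k,-1})$ are the constant signs of $J_{k+1}$ on two consecutive sign–intervals, namely $(-1)^{n+1}$ and $(-1)^{n}$, whose product is $-1$. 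Therefore $F_{k+2}\bigl(\alpha_{k,1}(\kappa)\bigr)>0$ for every $\kappa\notin\{s_n\}$.

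In parallel I would place $\alpha_{k,1}$ relative to the roots of $F_{k+2}$. A short computation of $\partial_\alpha F_{k+2}$ at $\alpha=\sqrt\kappa$ gives $\partial_\alpha F_{k+2}(\sqrt\kappa)=-2D_{k+2}(\sqrt\kappa)=\tfrac{2}{\sqrt\kappa}\tilde H_{k+2}(\sqrt\kappa)>0$ by \eqref{Dter} and Lemma~\ref{cserge1}, so $F_{k+2}>0$ on $\intervaloo{\sqrt\kappa,\alpha_{k+2,1}}$ and $<0$ on $\intervaloo{\alpha_{k+2,1},\alpha_{k+2,2}}$. Combined with the previous step, for $\kappa\notin\{s_n\}$ the point $\alpha_{k,1}$ lies where $F_{k+2}>0$, hence neither equals $\alpha_{k+2,1}$ nor lies in $\intervaloo{\alpha_{k+2,1},\alpha_{k+2,2}}$. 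Anchoring at the left endpoint of each $\intervaloo{s_n,s_{n+1}}$ then finishes it: near $s_0=0$, Lemma~\ref{kappa->0} gives $\alpha_{k,1}\to j_{k+1,1}<j_{k+3,1}$, with $\alpha_{k+2,1}\to j_{k+3,1}$, so $\alpha_{k,1}<\alpha_{k+2,1}$; near $s_n^+$ ($n\ge1$), continuity together with $\alpha_{k,1}(s_n)=j_{k+1,n+1}=\alpha_{k+2,1}(s_n)<\alpha_{k+2,2}(s_n)$ forbids $\alpha_{k,1}$ from jumping above $\alpha_{k+2,1}$ (that would place it in $\intervaloo{\alpha_{k+2,1},\alpha_{k+2,2}}$, where $F_{k+2}<0$). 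Since $\alpha_{k+2,1}-\alpha_{k,1}$ has a constant sign on each $\intervaloo{s_n,s_{n+1}}$, it is positive there, i.e.\ $\alpha_{k,1}<\alpha_{k+2,1}$ off $\{s_n\}$. At $\kappa=s_n$, Lemma~\ref{valalpha} gives $\alpha_{k,1}(s_n)=\alpha_{k+2,1}(s_n)=j_{k+1,n+1}$, while Proposition~\ref{intersa01,a11} applied to the pairs $(k,k+1)$ and $(k+1,k+2)$ yields $\alpha_{k+1,1}(s_n)=j_{k+1,n+1}$ as well, giving the stated triple coincidence.

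The main obstacle is the passage through the contact points $s_n$: one must exclude that the two increasing curves genuinely cross there, which would reverse the inequality. The identity does the decisive work, since it keeps $F_{k+2}(\alpha_{k,1})$ strictly positive on \emph{both} sides of each $s_n$; coupled with the local bound $\alpha_{k,1}<\alpha_{k+2,2}$ from continuity, this confines $\alpha_{k,1}$ to the band $\intervaloo{\sqrt\kappa,\alpha_{k+2,1}}$. The two remaining delicate points are getting the coefficient $2(k+1)$ and the precise combination of \eqref{A:1}, \eqref{A:3}, \eqref{A:4} right in the identity, and correctly tracking the parity of the signs of $J_{k+1}$ in the second step.
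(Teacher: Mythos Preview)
Your argument is correct and rests on the same key identity
\[
  F_{k+2}(\alpha)=F_k(\alpha)+2(k+1)\,\frac{\kappa^2-\alpha^4}{\alpha^2\kappa}\,J_{k+1}(\alpha)\,J_{k+1}\!\Bigl(\frac{\kappa}{\alpha}\Bigr)
\]
as the paper, together with the same sign analysis of the product $J_{k+1}(\alpha)J_{k+1}(\kappa/\alpha)$ via Lemma~\ref{valalpha}. The only difference is the \emph{direction} in which you exploit the identity: you evaluate $F_{k+2}$ at $\alpha_{k,1}$ and obtain $F_{k+2}(\alpha_{k,1})>0$, which by itself does not pin down where $\alpha_{k,1}$ sits among the roots of $F_{k+2}$, so you need the additional continuity/anchoring argument across the contact points $s_n$. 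The paper instead evaluates $F_k$ at $\alpha_{k+2,1}$, obtaining $F_k(\alpha_{k+2,1})\le 0$; since $F_k>0$ just above $\sqrt\kappa$, the Intermediate Value Theorem immediately yields a root of $F_k$ in $\intervaloc{\sqrt\kappa,\alpha_{k+2,1}}$, hence $\alpha_{k,1}\le\alpha_{k+2,1}$ with no further work. Your dual version is valid but slightly longer; the paper's direction avoids the anchoring step entirely.
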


\begin{proof}
  Observe first that, by the proof of Lemma \ref{alpha increase},
  $\partial_{\alpha}F_k(\sqrt{\kappa})>0$
  (for the second case in this proof, one has that
  $\partial_{\alpha} F_k(\sqrt{\kappa})
  = J_k^2(\sqrt{\kappa}) \partial_\alpha
  \tilde F_k(\sqrt{\kappa}, \kappa) >0$)
  and hence, we
  know that, for $\alpha>\sqrt{\kappa}$ close to $\sqrt{\kappa}$,
  $F_k(\alpha)>0$.
  To establish that $\alpha_{k,1}(\kappa) \le \alpha_{k+2,1}(\kappa)$,
  it suffices to show that $F_k(\alpha_{k+2,1})\le 0$.  Indeed, this
  implies by the intermediate value theorem that $F_k(\cdot)=0$ has a
  solution in $\intervaloo{\sqrt{\kappa}, \alpha_{k+2,1}(\kappa)}$,
  i.e., $\sqrt{\kappa}< \alpha_{k,1}(\kappa)\leq
  \alpha_{k+2,1}(\kappa)$.

  Using formula~\eqref{Fkequiv2} for $F_{k+2}$ in which one substitutes
  $J_{k+2}$ according to the formula \eqref{A:1} and then
  using again \eqref{Fkequiv}, we find
  \begin{align}
    F_{k+2}(\alpha)
    &= \frac{\kappa}{\alpha}
    J_{k+1}\Bigl(\frac{\kappa}{\alpha}\Bigr) J_{k+2}(\alpha )
    - \alpha J_{k+1}(\alpha) J_{k+2} \Bigl(\frac{\kappa}{\alpha} \Bigr)
    \notag\\
    &= 2(k+1) \frac{\kappa^2 - \alpha^4}{\alpha^2 \kappa}
    J_{k+1}(\alpha) J_{k+1}\Bigl(\frac{\kappa}{\alpha}\Bigr)
    + F_k(\alpha).
    \label{eq:F k+2 k}
  \end{align}
  Therefore, recalling that $\kappa / \alpha_{k+2,1} = \alpha_{k+2,-1}$ and $F_{k+2}(\alpha_{k+2,1})=0$, we have
  \begin{equation*}
    F_k(\alpha_{k+2,1})
    = 2(k+1) \frac{\alpha_{k+2,1}^4 - \kappa^2}{\alpha_{k+2,1}^2 \,\kappa}
    J_{k+1}(\alpha_{k+2,1}) J_{k+1}(\alpha_{k+2,-1}) .
  \end{equation*}
  Observe that the fraction is positive since $\alpha_{k+2,1} >
  \sqrt{\kappa}$.  Moreover, by~\eqref{serge:25/02:1}
  and~\eqref{serge:25/02:2}, for all $n\in
  \IN$ and all $\kappa\in
  \intervaloo{j_{k+1,n}j_{k+1,n+1},\ j_{k+1,n+1}j_{k+1,n+2}}$,
  we have $\alpha_{k+2,1}\in
  \intervaloo{j_{k+1,n+1},j_{k+1,n+2}}$ and $\alpha_{k+2,-1}\in
  \intervaloo{j_{k+1,n},j_{k+1,n+1}}$. This implies that
  $J_{k+1}(\alpha_{k+2,1}) J_{k+1}(\alpha_{k+2,-1})
  < 0$
  and thus that $F_k(\alpha_{k+2,1}) \le 0$ for all $\kappa > 0$
  which proves the first part of the result.
\medbreak

  For the second part of the statement, it is clear from
  Lemma~\ref{valalpha} that, when $\kappa = j_{k+1,n} j_{k+1,n+1}$,
  $\alpha_{k,1}(\kappa)
  = \alpha_{k+1,1}(\kappa)
  = j_{k+1,n+1}
  = \alpha_{k+2,1}(\kappa)$.
  Let us
  show this is the only possibility.  Suppose $\kappa > 0$ is such
  that $\alpha := \alpha_{k,1}(\kappa) = \alpha_{k+2,1}(\kappa)$.  In
  view of equation~\eqref{eq:F k+2 k}, $\alpha$ or $\kappa / \alpha$
  is a root of $J_{k+1}$.  In the latter case, using~\eqref{Fkequiv}
  and $F_k(\alpha) = 0$, one deduces that $\alpha$ must also be a root
  of $J_{k+1}$.  Thus, in both cases, $\alpha_{k,1}(\kappa) =
  \alpha_{k+2,1}(\kappa) = j_{k+1,m}$ for some $m \ge 1$.
  In fact $m \ge 2$ because $\alpha_{k+2,1} > j_{k+3,1} > j_{k+1,1}$.
  Then, the injectivity of $\alpha_{k,1}$ and Lemma~\ref{valalpha}
  imply that $\kappa$ has the desired form.
\end{proof}

\begin{Prop}
  \label{mina01 ou a11}
  For all $\kappa>0$, we have
  \begin{equation}
    \label{eq:min alpha}
    \Bar\alpha(\kappa) :=
    \min\bigl\{\alpha_{k,\ell}(\kappa) \bigm| k\in \IN, \ell\ge 1 \bigr\}
    = \min\bigl\{\alpha_{0,1}(\kappa), \alpha_{1,1}(\kappa) \bigr\}.
  \end{equation}
  Moreover, the first eigenvalue $\lambda_1(\kappa)$ is given by
  $\lambda_1(\kappa) = \Bar\alpha^2(\kappa) + \kappa^2 /
  \Bar\alpha^2(\kappa)$.
\end{Prop}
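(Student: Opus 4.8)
The plan is to reduce the minimization of the eigenvalues to a minimization of the numbers $\alpha_{k,\ell}$ and then to invoke the monotonicity results already at our disposal.

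First I would record the elementary fact that, for fixed $\kappa > 0$, the function $\phi(t) := t^2 + \kappa^2/t^2$ is strictly increasing on $\intervaloo{\sqrt{\kappa}, +\infty}$, since $\phi'(t) = 2(t^4 - \kappa^2)/t^3 > 0$ there. By Theorem~\ref{thm:alpha kl} the spectrum of~\eqref{eq:pbm} consists exactly of the numbers $\lambda_{k,\ell} = \phi(\alpha_{k,\ell})$ with $k \in \IN$, $\ell \ge 1$, and $\alpha_{k,\ell}(\kappa) > \sqrt{\kappa}$. Because $\phi$ is increasing on the range where all these $\alpha_{k,\ell}$ live, minimizing $\lambda_{k,\ell}$ over all admissible pairs $(k,\ell)$ is equivalent to minimizing $\alpha_{k,\ell}$. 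Hence, once the first equality in~\eqref{eq:min alpha} is established, one immediately gets $\lambda_1(\kappa) = \phi(\Bar\alpha(\kappa)) = \Bar\alpha^2(\kappa) + \kappa^2/\Bar\alpha^2(\kappa)$.

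Next I would reduce the double minimum to a single minimum over~$k$. Theorem~\ref{thm:alpha kl} states that, for each fixed $k$, the sequence $(\alpha_{k,\ell})_{\ell \ge 1}$ is increasing, so its smallest term is $\alpha_{k,1}$; therefore $\Bar\alpha(\kappa) = \min\{\alpha_{k,1}(\kappa) \mid k \in \IN\}$. I would then exploit the parity structure furnished by Proposition~\ref{alpha k k+2}, which gives $\alpha_{k,1}(\kappa) \le \alpha_{k+2,1}(\kappa)$ for every $k \in \IN$. This shows that the even-indexed subsequence $(\alpha_{2j,1})_{j \ge 0}$ is increasing with smallest term $\alpha_{0,1}$, and the odd-indexed subsequence $(\alpha_{2j+1,1})_{j \ge 0}$ is increasing with smallest term $\alpha_{1,1}$. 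Splitting the minimum over $k$ according to the parity of~$k$ then yields $\min_{k} \alpha_{k,1}(\kappa) = \min\{\alpha_{0,1}(\kappa), \alpha_{1,1}(\kappa)\}$, which is precisely~\eqref{eq:min alpha}; note that this minimum is genuinely attained, being reached at $k = 0$ or $k = 1$.

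The whole argument is short and elementary \emph{given the earlier results}. The only substantial ingredient is Proposition~\ref{alpha k k+2}, the comparison $\alpha_{k,1} \le \alpha_{k+2,1}$, which is exactly what rules out that some higher-order curve $\alpha_{k,1}$ with $k \ge 2$ could dip below both $\alpha_{0,1}$ and $\alpha_{1,1}$; everything else reduces to the monotonicity of $\phi$ and of the sequences $(\alpha_{k,\ell})_\ell$. Accordingly I expect no serious obstacle here, the real work having already been carried out in Proposition~\ref{alpha k k+2}.
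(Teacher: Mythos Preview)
Your argument is correct and follows essentially the same route as the paper: reduce to $\min_k \alpha_{k,1}$ using the monotonicity of $(\alpha_{k,\ell})_\ell$, invoke Proposition~\ref{alpha k k+2} to split by parity, and use the monotonicity of $\alpha \mapsto \alpha^2 + \kappa^2/\alpha^2$ on $\intervaloo{\sqrt{\kappa},+\infty}$ to pass to $\lambda_1$. The only difference is the order of presentation.
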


\begin{proof}
  It is obvious that
  $$
  \min\{\alpha_{k,\ell} \mid k\in \IN,\ \ell\geq 1\}
  = \min\{\alpha_{k,1} \mid k\in \IN\}.
  $$
  Since Proposition~\ref{alpha k k+2} asserts that, for all $k\in
  \IN$, $\alpha_{k+2,1} \ge \alpha_{k,1}$, \eqref{eq:min alpha}
  is established.  To conclude the proof it suffices to recall
  the relation~\eqref{rel_lambda_alpha} and to notice that
  $\alpha \mapsto \alpha^2 + \kappa^2/\alpha^2$ is increasing
  on $\intervaloo{\sqrt{\kappa}, +\infty}$
  (hence the smallest eigenvalue corresponds to the
  smallest~$\alpha_{k,\ell}$).
\end{proof}

In Theorem \ref{1st eigen} we will prove that the first eigenvalue of
\eqref{eq:pbm} given by Proposition~\ref{mina01 ou a11} comes alternatively
from $\alpha_{0,1}$ and $\alpha_{1,1}$. To this aim, it remains to
prove that, as shown in Figure~\ref{figure 1}, the curves
$\alpha_{0,1}$ and $\alpha_{1,1}$ cross each other non-tangentially.

\begin{Lem}
  \label{der inter}
  Let $k \in \IN$ and $n\ge 1$.
  \begin{itemize}
  \item
    If $\kappa = j_{k,n}j_{k,n+1}$, then
    $\partial_\kappa \alpha_{k,1}(\kappa)
    > \partial_\kappa\alpha_{k+1,1}(\kappa)$.
  \item  If $\kappa = j_{k+1,n}j_{k+1,n+1}$, then
    $\partial_\kappa\alpha_{k,1}(\kappa)
    < \partial_\kappa\alpha_{k+1,1}(\kappa)$.
  \end{itemize}
\end{Lem}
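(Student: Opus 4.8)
The plan is to reduce both statements to a single inequality between two explicit derivative formulas, the crucial point being that the logarithmic derivative $H_{m'}$ of a Bessel function takes a constant value at the zeros of a neighbouring Bessel function $J_m$ with $m=m'\mp1$.

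First I would pin down the data at a crossing point. Fix $n\ge 1$ and write $\alpha$ for the larger and $\beta=\kappa/\alpha$ for the smaller of the two common values. By Lemma~\ref{valalpha} (with $\ell=1$), at $\kappa=j_{k,n}j_{k,n+1}$ one has $\alpha=j_{k,n+1}$ and $\beta=j_{k,n}$, both zeros of $J_k$, while at $\kappa=j_{k+1,n}j_{k+1,n+1}$ one has $\alpha=j_{k+1,n+1}$ and $\beta=j_{k+1,n}$, both zeros of $J_{k+1}$. In each case exactly one of the two functions $\alpha_{k,1}$, $\alpha_{k+1,1}$ takes, at this $\kappa$, a value that is a zero of the Bessel function matching its own index (namely $\alpha_{k,1}$ in the first case, $\alpha_{k+1,1}$ in the second), whereas the other takes a value that, by the interlacing of the zeros of $J_k$ and $J_{k+1}$, is \emph{not} a zero of its own Bessel function. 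This dichotomy is precisely the distinction between the two cases in the proof of Lemma~\ref{alpha increase}, and it dictates which derivative formula applies.

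Next I would record the two derivative formulas. For the ``matching'' function the first case in the proof of Lemma~\ref{alpha increase} gives $\partial_\kappa\alpha=\alpha/(2\kappa)=1/(2\beta)$. For the ``non-matching'' function, say with Bessel index $m'$ (so $m'=k+1$ in the first case and $m'=k$ in the second), the second case in the proof of Lemma~\ref{alpha increase} gives, after substituting \eqref{eq:tilde Hbis} and setting $a:=\tilde H_{m'}(\alpha)/J_{m'}^2(\alpha)>0$ and $b:=\tilde H_{m'}(\beta)/J_{m'}^2(\beta)>0$,
\begin{equation*}
  \partial_\kappa\alpha
  = \frac{H_{m'}'(\beta)}{\beta H_{m'}'(\beta)+\alpha H_{m'}'(\alpha)}
  = \frac{b}{\beta(a+b)} .
\end{equation*}
Since $1/(2\beta) > b/\bigl(\beta(a+b)\bigr)$ is equivalent to $a>b$, both bullets follow at once once $a>b$ is established: this single inequality yields ``$>$'' in the first case and ``$<$'' in the second, because the matching function (whose derivative is $1/(2\beta)$) is $\alpha_{k,1}$ in the first case but $\alpha_{k+1,1}$ in the second.

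The hard part — and really the only computation — is proving $a>b$. Using \eqref{eq:tilde H} I would rewrite $\tilde H_{m'}(z)/J_{m'}^2(z)=z^2-m'^2+H_{m'}^2(z)$, so that $a-b=(\alpha^2-\beta^2)+\bigl(H_{m'}^2(\alpha)-H_{m'}^2(\beta)\bigr)$. The key observation is that $H_{m'}$ is constant on the zeros of the Bessel function of the adjacent index: if $J_m(z_0)=0$ then the recurrences \eqref{A:3}--\eqref{A:4} give $J_{m+1}(z_0)=-J_m'(z_0)$ and $J_{m+1}'(z_0)=\tfrac{m+1}{z_0}J_m'(z_0)$, whence $H_{m+1}(z_0)=-(m+1)$, and symmetrically $H_{m-1}(z_0)=m-1$ (here $J_m'(z_0)\ne0$ since the zeros of $J_m$ are simple). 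In the first case $m=k$ and $m'=k+1$, so $H_{m'}(\alpha)=H_{m'}(\beta)=-(k+1)$; in the second case $m=k+1$ and $m'=k$, so $H_{m'}(\alpha)=H_{m'}(\beta)=k$. Either way the squared terms cancel and $a-b=\alpha^2-\beta^2>0$, which completes the argument. The whole difficulty is thus concentrated in spotting this cancellation; everything else is bookkeeping with Lemma~\ref{valalpha} and the two derivative formulas already derived in the proof of Lemma~\ref{alpha increase}.
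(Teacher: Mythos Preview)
Your argument is correct and follows essentially the same path as the paper's proof: both hinge on evaluating the derivative formulas from the two cases of Lemma~\ref{alpha increase} at the crossing point, and both rely on the identity $H_{k+1}(j_{k,\bar n})=-(k+1)$ (equivalently, the paper's $\tilde H_{k+1}(j_{k,\bar n})=j_{k,\bar n}^2 J_{k+1}^2(j_{k,\bar n})$, i.e.\ $H_{k+1}'(j_{k,\bar n})=-j_{k,\bar n}$) obtained from the recurrence~\eqref{A:3}. Your organization is slightly more economical in that you reduce both bullets to the single inequality $a>b$ and treat the two cases symmetrically via $H_{m\pm1}(z_0)=\mp(m\pm1)$ at zeros $z_0$ of $J_m$, whereas the paper works out the first bullet explicitly and then says ``the argument is similar'' for the second; but the computations are identical in substance.
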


\begin{proof}
  We will use the computations in the proof of Lemma~\ref{alpha
    increase} to evaluate the derivatives of $\alpha_{k,1}$
  and $\alpha_{k+1,1}$.
  Recall that, for $ n\geq 1$, we have
  $\alpha_{k,1}(j_{k,n}j_{k,n+1}) =
  \alpha_{k+1,1}(j_{k,n}j_{k,n+1}) = j_{k,n+1}$.  On one hand,
  the first case in the proof of Lemma~\ref{alpha increase} implies
  $\partial_\kappa\alpha_{k,1}(j_{k,n}j_{k,n+1}) =
  j_{k,n+1}/(2 j_{k,n}j_{k,n+1})= (2 j_{k,n})^{-1}$.  On
  the other hand, for the derivative of $\alpha_{k+1,1}$ at
  $j_{k,n}j_{k,n+1}$, we are in the second case
  of the proof of Lemma~\ref{alpha increase} and
  \begin{equation*}
    \partial_\kappa\alpha_{k+1,1}(j_{k,n}j_{k,n+1})
    = \frac{\frac{1}{j_{k,n+1}} H_{k+1}'(j_{k,n})}{
      \frac{j_{k,n}}{j_{k,n+1}} H_{k+1}'(j_{k,n}) + H_{k+1}'(j_{k,n+1})}.  
  \end{equation*}
  By  Lemma \ref{cserge1}, we know that, for all $\bar n \ge 1$,
  \begin{equation*}
    H_{k+1}'(j_{k, \bar n})
    = \frac{-\tilde H_{k+1}(j_{k, \bar n})}{
      j_{k, \bar n} \, J_{k+1}^2(j_{k, \bar n})},    
  \end{equation*}
  and
  \begin{equation*}
    \tilde H_{k+1}(j_{k, \bar n})
    = \bigl(j_{k, \bar n}^2- (k+1)^2\bigr)  J_{k+1}^2(j_{k, \bar n})
    + j_{k, \bar n}^2 \bigl(J_{k+1}'(j_{k, \bar n})\bigr)^2.
  \end{equation*}
  As, by~\eqref{A:3}, $J_{k+1}'(j_{k, \bar n}) = J_k(j_{k, \bar n}) - \frac{k+1}{j_{k,\bar n}}
  J_{k+1}(j_{k, \bar n}) = -\frac{k+1}{j_{k, \bar n}} J_{k+1}(j_{k,\bar n})$,
  we deduce
  that
  \begin{equation*}
    \tilde H_{k+1}(j_{k, \bar n})
    = j_{k, \bar n}^2 J_{k+1}^2(j_{k, \bar n})
    \qquad\text{and, finally,}\qquad
    H_{k+1}'(j_{k, \bar n})= - j_{k, \bar n}.
  \end{equation*}
  This implies that
  \begin{equation*}
    \partial_\kappa\alpha_{k+1,1}(j_{k,n}j_{k,n+1})
    = \frac{\frac{- j_{k,n}}{j_{k,n+1}}}{
      \frac{-j_{k,n}^2}{j_{k,n+1}}- j_{k,n+1}}
    = \frac{ j_{k,n}}{j_{k,n}^2+ j_{k,n+1}^2}.    
  \end{equation*}
  We can then conclude that 
  \begin{equation*}
    \partial_\kappa\alpha_{k+1,1}(j_{k,n}j_{k,n+1})
    = \frac{j_{k,n}}{j_{k,n}^2+ j_{k,n+1}^2}
    < \frac{j_{k,n}}{2j_{k,n}^2}
    = \partial_\kappa\alpha_{k,1}(j_{k,n}j_{k,n+1}).    
  \end{equation*}
  The argument is similar if $\kappa\in \{j_{k+1,n}j_{k+1,n+1}\mid
  n\geq 1\}$.
\end{proof}

Now we can give our first two main results which characterize the
first eigenvalue and the first eigenspace with respect to the value of
$\kappa$.  In Theorem~\ref{1st eigen}, we deal with the case
$\alpha_{0,1} \ne \alpha_{1,1}$ while the case $\alpha_{0,1} =
\alpha_{1,1}$ is considered in Theorem~\ref{1st eigen2}.

\begin{Thm}
  \label{1st eigen}
  Denote $R_{k,\ell}$ the function defined by equation~\eqref{eq R} with
  $\alpha = \alpha_{k,\ell}$ given by Theorem~\ref{thm:alpha kl} and
  $(c,d)$ being a non-zero element of the
  one dimensional space of solutions to~\eqref{eq:R bd cond}.

  For all $\kappa\in \intervalco{0,j_{0,1}j_{0,2}} \cup  \bigcup_{n\ge 1}
  \intervaloo{j_{1,n}j_{1,n+1}, \,j_{0,n+1}j_{0,n+2}}$,
  the first eigenvalue is given by $\lambda_1(\kappa) =
  \alpha_{0,1}^2(\kappa) + \kappa^2 / \alpha_{0,1}^2(\kappa)$
  and the eigenfunctions are multiples~of
  \begin{equation*}
    x \mapsto R_{0,1}(\abs{x})
  \end{equation*}
  and are thus radial.
  Consequently, the first eigenspace has dimension~$1$.

  For all $\kappa\in \bigcup_{n\geq 0}
  \intervaloo{j_{0,n+1}j_{0,n+2}, \,j_{1,n+1}j_{1,n+2}}$, the first
  eigenvalue is given by $\lambda_1(\kappa) = \alpha_{1,1}^2(\kappa) +
    \kappa^2 / \alpha_{1,1}^2(\kappa)$ and the
  eigenfunctions have the form 
	$$
	R_{1,1}(r) (c_1\cos\theta + c_2\sin\theta)
	$$
  for any $c_1$ and $c_2$.
  In this case, the first eigenspace has dimension~$2$.
%
\end{Thm}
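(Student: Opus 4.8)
The plan is to reduce everything to comparing the two curves $\alpha_{0,1}$ and $\alpha_{1,1}$ and to read off, on each subinterval, which one realizes the minimum from Proposition~\ref{mina01 ou a11}. Indeed, that proposition already gives $\lambda_1(\kappa) = \Bar\alpha^2(\kappa) + \kappa^2/\Bar\alpha^2(\kappa)$ with $\Bar\alpha(\kappa) = \min\{\alpha_{0,1}(\kappa), \alpha_{1,1}(\kappa)\}$, so the only thing left is to identify $\Bar\alpha$ explicitly and to compute the corresponding eigenspace. First I would invoke Proposition~\ref{intersa01,a11} with $k=0$ and $\ell=1$ to see that $\alpha_{0,1}$ and $\alpha_{1,1}$ coincide exactly at the points $\kappa \in \{j_{0,n}j_{0,n+1} : n \ge 1\} \cup \{j_{1,n}j_{1,n+1} : n\ge 1\}$, and then use the inequalities~\eqref{eq:ineq prod jkl} (again with $k=0$, $\ell=1$) to order these crossing points as
\begin{equation*}
  j_{0,1}j_{0,2} < j_{1,1}j_{1,2} < j_{0,2}j_{0,3} < j_{1,2}j_{1,3} < \dotsb
\end{equation*}
Between two consecutive such values the two curves never meet, so by continuity the sign of $\alpha_{0,1} - \alpha_{1,1}$ is constant there and exactly one of the two realizes $\Bar\alpha$.

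Next I would pin down the order on the first interval and propagate it through the crossings. As $\kappa \to 0^+$, Lemma~\ref{kappa->0} gives $\alpha_{0,1} \to j_{1,1}$ and $\alpha_{1,1} \to j_{2,1}$; since $j_{1,1} < j_{2,1}$, we get $\alpha_{0,1} < \alpha_{1,1}$, hence $\Bar\alpha = \alpha_{0,1}$, on $\intervaloo{0, j_{0,1}j_{0,2}}$ (the endpoint $\kappa = 0$ being the case treated in Theorem~\ref{tserge2}). To pass each crossing I would use the transversality from Lemma~\ref{der inter}: at $\kappa = j_{0,n}j_{0,n+1}$ one has $\partial_\kappa\alpha_{0,1} > \partial_\kappa\alpha_{1,1}$, so $\alpha_{0,1}$ overtakes $\alpha_{1,1}$ and the minimum switches from $\alpha_{0,1}$ to $\alpha_{1,1}$; conversely, at $\kappa = j_{1,n}j_{1,n+1}$ one has $\partial_\kappa\alpha_{0,1} < \partial_\kappa\alpha_{1,1}$ and the minimum switches back to $\alpha_{0,1}$. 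An immediate induction along the ordered list of crossings then shows that $\Bar\alpha = \alpha_{0,1}$ precisely on $\intervalco{0, j_{0,1}j_{0,2}} \cup \bigcup_{n\ge 1} \intervaloo{j_{1,n}j_{1,n+1}, j_{0,n+1}j_{0,n+2}}$ and $\Bar\alpha = \alpha_{1,1}$ precisely on $\bigcup_{n\ge 0} \intervaloo{j_{0,n+1}j_{0,n+2}, j_{1,n+1}j_{1,n+2}}$, which is exactly the partition in the statement; the value of $\lambda_1$ then follows from~\eqref{rel_lambda_alpha}.

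Finally, for the eigenspaces I would argue that on each open interval the minimizing index is unique. Where $\Bar\alpha = \alpha_{0,1}$ strictly, $\alpha_{0,1}$ lies strictly below $\alpha_{1,1}$, hence below every $\alpha_{k,1}$ with $k$ odd (as these dominate $\alpha_{1,1}$ by Proposition~\ref{alpha k k+2}) and below every $\alpha_{k,1}$ with $k$ even (which dominate $\alpha_{0,1}$ by the same proposition); since $\alpha_{k,\ell}$ is strictly increasing in $\ell$ by Theorem~\ref{thm:alpha kl}, the pair $(0,1)$ is the only index attaining $\Bar\alpha$. Because $k=0$ contributes the single eigenfunction $x \mapsto R_{0,1}(\abs{x})$, which is radial, the eigenspace is one-dimensional. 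Where $\Bar\alpha = \alpha_{1,1}$ strictly, the same reasoning singles out $k = \pm 1$ with $\ell = 1$, giving the two independent eigenfunctions $R_{1,1}(r)\e^{\pm\i\theta}$, i.e.\ the real span of $R_{1,1}(r)\cos\theta$ and $R_{1,1}(r)\sin\theta$, whence a two-dimensional eigenspace. The main obstacle is the bookkeeping in the inductive step: one must check that the transversal-crossing directions supplied by Lemma~\ref{der inter} are compatible with the ordering of the crossing points, so that the swaps reproduce exactly the alternating intervals of the statement rather than a shifted pattern.
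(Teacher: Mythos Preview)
Your proposal is correct and follows essentially the same route as the paper: reduce via Proposition~\ref{mina01 ou a11}, locate and order the crossings via Proposition~\ref{intersa01,a11} and~\eqref{eq:ineq prod jkl}, propagate the sign of $\alpha_{0,1}-\alpha_{1,1}$ using Lemma~\ref{der inter}, and rule out the remaining indices with Proposition~\ref{alpha k k+2}. The one spot where the paper is more explicit than your ``same reasoning'' is the check $\alpha_{1,1} < \alpha_{3,1}$ in the second family of intervals: there the paper notes that $\alpha_{1,1} = \alpha_{3,1}$ would force $\alpha_{1,1} = \alpha_{2,1}$ (by the equality clause of Proposition~\ref{alpha k k+2}) and hence $\alpha_{0,1} \le \alpha_{2,1} = \alpha_{1,1}$, contradicting the strict inequality $\alpha_{1,1} < \alpha_{0,1}$ you have already established on those intervals.
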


\begin{proof}
  Note that, when $\alpha = \alpha_{k,\ell}$, the system \eqref{eq:R
    bd cond} is degenerate.  Moreover, $J_k(\alpha)$ and $J'_k(\alpha)$
  cannot vanish together.  Thus the dimention of the space of
  solutions to the system \eqref{eq:R bd cond} is exactly~$1$.

  By Proposition~\ref{mina01 ou a11},
  $\lambda_1 = \alpha^2+\kappa^2/\alpha^2$ with
  $\alpha = \min\{\alpha_{0,1}(\kappa), \alpha_{1,1}(\kappa)\}$.
  Thus Proposition~\ref{intersa01,a11} and
  Lemma~\ref{der inter} (with $k=0$) imply the claims about $\lambda_1$.
  Moreover by Proposition \ref{alpha k k+2}, we know that,
  for the values of $\kappa$ considered in the statement,
  $\alpha_{0,1}<\alpha_{2,1}$.
  To conclude the proof, it remains to establish that,
  in the second case, $\alpha_{1,1} < \alpha_{3,1}$.
  We will show that if $\alpha_{1,1}=\alpha_{3,1}$ then
  $\alpha_{0,1}<\alpha_{1,1}$.

  By Proposition \ref{alpha k k+2}, we know that
  $\alpha_{1,1}(\kappa)=\alpha_{3,1}(\kappa)$ if and only if
  $\kappa=j_{2,n}j_{2,n+1}$ for some $n\in \IN^*$, in which
  case
  \begin{equation}
\label{eq*}    
\alpha_{1,1}(j_{2,n}j_{2,n+1})
    = \alpha_{2,1}(j_{2,n}j_{2,n+1})
    = \alpha_{3,1}(j_{2,n}j_{2,n+1})
    = j_{2,n+1} .
  \end{equation}
  On the other hand, again by Proposition~\ref{alpha k k+2}, we have
  $\alpha_{0,1}(\kappa)\leq \alpha_{2,1}(\kappa)$ and
  $\alpha_{0,1}(\kappa)=\alpha_{2,1}(\kappa)$ if and only if
  $\kappa=j_{1,m}j_{1,m+1}$ for some $m\in \IN^*$.
  Thanks to \eqref{eq:ineq prod jkl} and \eqref{eq*},
  this implies the conclusion that 
  \begin{equation*}
    \alpha_{0,1}(j_{2,n}j_{2,n+1})
    < \alpha_{2,1}(j_{2,n}j_{2,n+1})
    = \alpha_{1,1}(j_{2,n}j_{2,n+1}).    \qedhere
  \end{equation*}
\end{proof}


\begin{Thm} 
  \label{1st eigen2}
  Denote $R_{k,\ell}$ the function defined by equation~\eqref{eq R} with
  $\alpha = \alpha_{k,\ell}$ given by Theorem~\ref{thm:alpha kl} and
  $(c,d)$ being a non-zero element of the
  one dimensional space of solutions to~\eqref{eq:R bd cond}.

  If $\kappa = j_{0,n}j_{0,n+1}$ for some $n \ge 1$, then
  $\alpha_{0,1} = \alpha_{1,1} < \alpha_{k,\ell}$ for all $(k,\ell)$
  different from $(0,1)$ and $(1,1)$.  The eigenfunctions have the
  form
  \begin{equation*}
    c_1 R_{0,1}(r) + R_{1,1}(r) (c_2\cos\theta + c_3\sin\theta),
    \qquad c_1, c_2, c_3 \in \IR.
  \end{equation*}

  If $\kappa = j_{1,n}j_{1,n+1}$ for some $n \ge 1$, then
  $\alpha_{0,1} = \alpha_{1,1} = \alpha_{2,1} < \alpha_{k,\ell}$
  for all $(k,\ell) \notin \{ (0,1),\ (1,1),\ (2,1) \}$.
  The eigenfunctions have the form
  \begin{equation*}
    c_1 R_{0,1}(r) + R_{1,1}(r) (c_2\cos\theta + c_3\sin\theta)
    + R_{2,1}(r) \bigl(c_4 \cos(2\theta) + c_5 \sin(2\theta) \bigr).
  \end{equation*}
  where $c_1,\dotsc,c_5$ vary in $\IR$.
\end{Thm}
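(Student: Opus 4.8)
The plan is to reduce everything to identifying exactly which of the curves $\alpha_{k,\ell}$ realize the minimum $\bar\alpha(\kappa)$ of~\eqref{eq:min alpha} at the exceptional values $\kappa = j_{0,n}j_{0,n+1}$ and $\kappa = j_{1,n}j_{1,n+1}$, and then reading off the eigenfunctions from Theorem~\ref{thm:alpha kl}. By Proposition~\ref{mina01 ou a11} the first eigenvalue is $\lambda_1 = \bar\alpha^2 + \kappa^2/\bar\alpha^2$ with $\bar\alpha = \min\{\alpha_{0,1},\alpha_{1,1}\}$, so the task splits into (i) showing that the two (resp.\ three) listed pairs $(k,\ell)$ all produce $\alpha_{k,\ell} = \bar\alpha$, and (ii) showing that every other pair gives $\alpha_{k,\ell} > \bar\alpha$ strictly.

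For step~(i) I would use Lemma~\ref{valalpha} and Proposition~\ref{alpha k k+2}. At $\kappa = j_{0,n}j_{0,n+1}$, Lemma~\ref{valalpha} (with $k=0$, $\ell=1$) gives $\alpha_{0,1}(\kappa) = j_{0,n+1} = \alpha_{1,1}(\kappa)$, so $\bar\alpha = j_{0,n+1}$. At $\kappa = j_{1,n}j_{1,n+1}$, the same lemma gives $\alpha_{0,1}(\kappa) = j_{1,n+1} = \alpha_{1,1}(\kappa)$, and since this $\kappa$ is of the form $j_{k+1,m}j_{k+1,m+1}$ with $k=0$, the equality clause of Proposition~\ref{alpha k k+2} forces in addition $\alpha_{2,1}(\kappa) = j_{1,n+1}$; hence $\bar\alpha = \alpha_{0,1}=\alpha_{1,1}=\alpha_{2,1} = j_{1,n+1}$.

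Step~(ii) is the heart of the argument and I expect it to be the main obstacle, since it requires ruling out any accidental extra coincidence among the curves. The mechanism is a ladder built from Proposition~\ref{alpha k k+2}, namely $\alpha_{k,1} \le \alpha_{k+2,1}$, together with its strictness clause: equality $\alpha_{k,1} = \alpha_{k+2,1}$ forces the triple coincidence $\alpha_{k,1}=\alpha_{k+1,1}=\alpha_{k+2,1}$ and can only occur on $\{j_{k+1,m}j_{k+1,m+1}\}$. I would separate the even and odd ladders $\alpha_{0,1}\le\alpha_{2,1}\le\alpha_{4,1}\le\cdots$ and $\alpha_{1,1}\le\alpha_{3,1}\le\cdots$. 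In the first case $\alpha_{0,1}<\alpha_{2,1}$ because $\kappa=j_{0,n}j_{0,n+1}$ is \emph{not} of the form $j_{1,m}j_{1,m+1}$ -- this disjointness is precisely the interlacing~\eqref{eq:ineq prod jkl} -- and then $\alpha_{1,1}<\alpha_{3,1}$, since otherwise the triple clause would give $\alpha_{1,1}=\alpha_{2,1}$, contradicting $\alpha_{1,1}=\alpha_{0,1}<\alpha_{2,1}$; climbing both ladders yields $\alpha_{k,1}>\bar\alpha$ for all $k\ge2$. In the second case the same reasoning, shifted by one, gives $\alpha_{1,1}<\alpha_{3,1}$ (using $j_{1,n}j_{1,n+1}\notin\{j_{2,m}j_{2,m+1}\}$, again~\eqref{eq:ineq prod jkl}) and $\alpha_{2,1}<\alpha_{4,1}$ (else $\alpha_{2,1}=\alpha_{3,1}$, contradicting $\alpha_{2,1}=\bar\alpha<\alpha_{3,1}$), so $\alpha_{k,1}>\bar\alpha$ for all $k\ge3$. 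Since $\alpha_{k,\ell}$ is increasing in $\ell>0$, this upgrades to $\alpha_{k,\ell}>\bar\alpha$ for every remaining pair.

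Finally I would assemble the eigenspace. Because $t\mapsto t^2+\kappa^2/t^2$ is strictly increasing on $\intervaloo{\sqrt\kappa,+\infty}$ and every $\alpha_{k,\ell}$ with $\ell>0$ exceeds $\sqrt\kappa$, the relation~\eqref{rel_lambda_alpha} shows $\lambda_{k,\ell}=\lambda_1$ if and only if $\alpha_{k,\ell}=\bar\alpha$; by steps~(i)--(ii) this happens exactly for the listed pairs. Running the Fourier decomposition $u=\sum_k u_k(r)\e^{\i k\theta}$ as in Theorem~\ref{tserge2} and Proposition~\ref{eigenfunctions}, each mode $u_k$ must solve the radial boundary value problem, which forces $\bar\alpha$ to coincide with some $\alpha_{\abs{k},\ell}$; hence only the modes $k=0$, $k=\pm1$ (and, in the second case, $k=\pm2$) survive. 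As the solution space of~\eqref{eq:R bd cond} is one dimensional (each $R_{k,1}$ is determined up to a scalar, since $J_k(\alpha)$ and $J_k'(\alpha)$ never vanish simultaneously), the $k=0$ mode contributes $R_{0,1}(r)$ while each pair $k=\pm m$ contributes the two real functions $R_{m,1}(r)\cos(m\theta)$ and $R_{m,1}(r)\sin(m\theta)$. This produces the stated forms and the dimensions $3$ and $5$, completing the proof.
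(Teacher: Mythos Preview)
Your proof is correct and follows essentially the same approach as the paper: both use Lemma~\ref{valalpha} and Proposition~\ref{alpha k k+2} to establish the equalities, then rule out further coincidences by combining the strictness clause of Proposition~\ref{alpha k k+2} with interlacing. Your bootstrapping device for the odd/even ladders (reducing $\alpha_{1,1}<\alpha_{3,1}$ to the already-proven $\alpha_{0,1}<\alpha_{2,1}$ via the triple clause) is a slight streamlining of the paper's argument, which instead invokes the interlacing of roots of $J_0$ and $J_2$ directly at that step.
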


\begin{proof}
  First consider $\kappa = j_{0,n}j_{0,n+1}$ for some $n \ge 1$.
  Using Lemma~\ref{valalpha}, one has $\alpha_{0,1}(j_{0,n}j_{0,n+1}) = \alpha_{1,1} (j_{0,n}j_{0,n+1})=
  j_{0,n+1}$.  Proposition~\ref{alpha k k+2} implies that
  $\alpha_{0,1}(j_{0,n}j_{0,n+1}) < \alpha_{2,1}(j_{0,n}j_{0,n+1})$ as, if they were equal, then
  $\alpha_{0,1}(j_{0,n}j_{0,n+1}) = \alpha_{2,1}(j_{0,n}j_{0,n+1}) = j_{1,m}$ which is impossible because
  the positive roots of $J_0$ and $J_1$ interlace.

  A similar argument shows
  $\alpha_{1,1} (j_{0,n}j_{0,n+1})< \alpha_{3,1}(j_{0,n}j_{0,n+1})$
  because the roots of $J_0$ and $J_2$ interlace
  (see Remark~\ref{interlace k k+2}).
  Using again
  Proposition~\ref{alpha k k+2}, it is then easy to conclude that no other
  $\alpha_{k,\ell}$ is equal to $\alpha_{0,1} = \alpha_{1,1}$.
  The form of the eigenfunctions readily follows from
  Proposition~\ref{eigenfunctions}.

  Now, let $\kappa = j_{1,n}j_{1,n+1}$ for some $n \ge 1$.
  Proposition~\ref{alpha k k+2} says that
  $\alpha_{0,1}(j_{1,n}j_{1,n+1}) = \alpha_{1,1} (j_{1,n}j_{1,n+1})
  = \alpha_{2,1}(j_{1,n}j_{1,n+1}) = j_{1,n+1}$.
  Moreover, by the same arguments as above, one gets
  $\alpha_{2,1}(j_{1,n}j_{1,n+1})
  < \alpha_{4,1}(j_{1,n}j_{1,n+1})$ as well as $\alpha_{1,1} (j_{1,n}j_{1,n+1})< \alpha_{3,1}(j_{1,n}j_{1,n+1})$ and then
  conclude that no other $\alpha_{k,\ell}$ is equal to $\alpha_{0,1} =
  \alpha_{1,1} = \alpha_{2,1}$.  Again, the form of the eigenfunctions
  follows easily.
\end{proof}

\section{Nodal properties of the first eigenfunction}
\label{sec:nodal prop}

In this section, we will give further nodal properties of the eigenfunctions  with respect to the $\kappa$-intervals.

\begin{Lem}
  \label{R01}
  Let $0 < \kappa \le j_{0,1} j_{0,2}$ and $R_{0,1}$ be defined as in
  Theorem~\ref{1st eigen} (or~\ref{1st eigen2}).  Then $r \mapsto
  \abs{R_{0,1}(r)}$ is positive in $\intervalco{0,1}$ and
  decreasing.
\end{Lem}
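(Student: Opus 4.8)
The plan is to prove the slightly stronger statement that $R_{0,1}$ is \emph{strictly decreasing} on $\intervalcc{0,1}$ and \emph{strictly positive} on $\intervalco{0,1}$; since $R_{0,1}(0) = c + d > 0$ this gives $\abs{R_{0,1}} = R_{0,1}$ and both assertions at once. Write $\alpha := \alpha_{0,1}(\kappa)$ and $\beta := \alpha_{0,-1}(\kappa) = \kappa/\alpha$, so that $R_{0,1}(r) = c J_0(\alpha r) + d J_0(\beta r)$ with $(c,d)$ solving~\eqref{eq:R bd cond} and $\alpha > \sqrt\kappa > \beta$ by Theorem~\ref{thm:alpha kl}. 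First I would pin down the elementary data: by Remark~\ref{Range of alpha_k1} (equations~\eqref{serge:25/02:1} and~\eqref{serge:25/02:2} with $k = 0$, $\ell = 1$, $n = 0$), together with Lemma~\ref{valalpha} at the right endpoint, one has $\alpha \in \intervaloc{j_{0,1}, j_{0,2}}$ and $\beta \in \intervaloc{0, j_{0,1}}$. Consequently $c$ and $d$ have the same sign (from the first relation in~\eqref{eq:R bd cond}, where $J_0(\alpha) < 0 < J_0(\beta)$, when $\kappa < j_{0,1}j_{0,2}$, and from the second relation at the endpoint, where $J_1(j_{0,2}) < 0 < J_1(j_{0,1})$), so after rescaling I may assume $c, d > 0$. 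I also record that $\phi_0 := J_0(\beta\,\cdot\,)$ is positive on $\intervalco{0,1}$ and satisfies $\phi_0'(r) = -\beta J_1(\beta r) < 0$ on $\intervaloo{0,1}$, because $0 < \beta r < j_{0,1} < j_{1,1}$ there.

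The key device is a ground-state substitution along $\phi_0$. Applying $\Delta + \beta^2$ to $R_{0,1}$ annihilates its $J_0(\beta r)$ component, leaving
$$
  w := (\Delta + \beta^2) R_{0,1} = c\,(\beta^2 - \alpha^2)\, J_0(\alpha r),
$$
a multiple of $J_0(\alpha\,\cdot\,)$ with negative coefficient (since $c > 0$ and $\alpha > \beta$). Setting $h := R_{0,1}/\phi_0$ on $\intervalco{0,1}$ and using the radial identities $(r\phi_0')' = -\beta^2 r \phi_0$ and $(r R_{0,1}')' = r\,(w - \beta^2 R_{0,1})$, a direct computation yields the Wronskian relation
$$
  r\,\phi_0(r)^2\, h'(r) = W(r) := \int_0^r s\, w(s)\, \phi_0(s) \intd s .
$$
Here $W(0) = 0$ by regularity at the origin, while the clamped conditions $R_{0,1}(1) = R_{0,1}'(1) = 0$ encoded in~\eqref{eq:R bd cond} give $W(1) = \bigl(r\,(R_{0,1}'\phi_0 - R_{0,1}\phi_0')\bigr)\big|_{r=1} = 0$.

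The sign analysis then closes the argument. Since $\alpha \in \intervaloc{j_{0,1}, j_{0,2}}$, the factor $J_0(\alpha r)$ vanishes exactly once on $\intervaloo{0,1}$, at $\rho := j_{0,1}/\alpha$, being positive on $\intervaloo{0,\rho}$ and negative on $\intervaloo{\rho,1}$; as $\phi_0 > 0$, the integrand $s\, w(s)\, \phi_0(s)$ is negative on $\intervaloo{0,\rho}$ and positive on $\intervaloo{\rho,1}$. Together with $W(0) = W(1) = 0$ this forces $W < 0$ on $\intervaloo{0,1}$, hence $h' = W/(r\phi_0^2) < 0$ and $h$ is strictly decreasing. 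Because $\lim_{r \to 1^-} h(r) = 0$ (clear when $\beta < j_{0,1}$ since then $\phi_0(1) \ne 0$ and $R_{0,1}(1) = 0$; by l'Hospital's rule, using $R_{0,1}'(1) = 0$, in the endpoint case $\beta = j_{0,1}$ where $\phi_0(1) = 0$), monotonicity gives $h > 0$ on $\intervalco{0,1}$, i.e. $R_{0,1} = h\,\phi_0 > 0$ there. Finally $R_{0,1}' = h'\phi_0 + h\,\phi_0'$ is a sum of two negative terms on $\intervaloo{0,1}$ (as $h > 0$, $h' < 0$, $\phi_0 > 0$, $\phi_0' < 0$), so $R_{0,1}$ is strictly decreasing, completing the proof.

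The main obstacle is conceptual rather than computational: the ``source'' $w$ changes sign, so the positivity of $R_{0,1}$ cannot be read off from a bare maximum principle for $\Delta + \beta^2$ (available here since $\beta \le j_{0,1}$ keeps the Dirichlet operator on the unit disk nonnegative). The ground-state transform bypasses this precisely because the boundary condition $R_{0,1}'(1) = 0$ makes the primitive $W$ vanish a second time at $r = 1$, turning the sign-changing integrand into a sign-definite $W$ and thereby forcing $h$ to be monotone. The only delicate bookkeeping is the right endpoint $\kappa = j_{0,1}j_{0,2}$, where $\beta = j_{0,1}$ and $\phi_0$ acquires a simple zero at $r = 1$: there the transform must be run on $\intervalco{0,1}$ and the limit $h(1^-) = 0$ justified by l'Hospital's rule rather than by direct evaluation.
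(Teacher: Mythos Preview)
Your proof is correct and takes a genuinely different route from the paper's. The paper works with $v=R_{0,1}'$ directly: after checking $v<0$ on the inner region where both $J_1$ terms in~\eqref{eq:d R01} are positive, it assumes a zero $r^*$ of $v$, applies the maximum principle for $-\Delta+(r^{-2}-\beta^2)$ on the annulus $\{r^*<|x|<1\}$ to force $v\ge 0$ there, and then derives a contradiction by evaluating the equation satisfied by $R_{0,1}$ at $r=1$. Your approach instead performs a ground-state substitution $h=R_{0,1}/\phi_0$ with $\phi_0=J_0(\beta\,\cdot\,)$ and shows via the Wronskian identity that $r\phi_0^2 h'=W$, where $W$ is the primitive of the sign-changing source against $r\phi_0$; the clamped boundary conditions give $W(1)=0$, and since the integrand changes sign exactly once this forces $W<0$ on $(0,1)$. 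This is more elementary---no PDE maximum principle is invoked---and delivers positivity and strict monotonicity of $R_{0,1}$ in one stroke. The paper's argument, on the other hand, has the advantage that the same maximum-principle template is reused verbatim for $R_{k,1}$ in Lemma~\ref{Rk1}, so it slots into a uniform framework; your ground-state transform would require modification for $k>0$ since $R_{k,1}(0)=0$ and the natural candidate $\phi_0=J_k(\beta\,\cdot\,)$ also vanishes at the origin.
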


\begin{proof}
  Theorem~\ref{1st eigen} says that
  \begin{equation*}
    R_{0,1}(r)
    = c\,  J_0(\alpha_{0,1} r)
    + d \, J_0\Bigl(\frac{\kappa}{\alpha_{0,1}} r\Bigr)
  \end{equation*}
  where  $(c,d)$ is a nontrivial solution to~\eqref{eq:R bd cond}
  with $\alpha = \alpha_{0,1}$.
  Remark~\ref{Range of alpha_k1} imply
  that, for $0 < \kappa \le j_{0,1}j_{0,2}$, we have
  $0< \alpha_{0,-1}(\kappa) \le j_{0,1} < j_{1,1} < \alpha_{0,1}(\kappa)
  \le j_{0,2}$.
  Thus $J_0'(\alpha_{0,1}) = -J_1(\alpha_{0,1}) > 0$ and
  $J_0'(\kappa/\alpha_{0,1}) = -J_1(\kappa/\alpha_{0,1}) < 0$ and
  hence the second equation of~\eqref{eq:R bd cond} is non-degenerate
  and a possibility is to choose w.l.o.g.
  \begin{equation*}
    c := -\frac{\kappa}{\alpha_{0,1}}
    J_0'\Bigl(\frac{\kappa}{\alpha_{0,1}}\Bigr) > 0
    \qquad\text{and}\qquad
    d := \alpha_{0,1} J'_0(\alpha_{0,1}) > 0.
  \end{equation*}
  We want to show that $v(r) := \partial_rR_{0,1}(r) <0$ for all $r \in
  \intervaloo{0,1}$. As $R_{0,1}(1)=0$ we then obtain also $R_{0,1}>0$
  on~$\intervalco{0,1}$.
	
  Observe that $v$ is given by
  \begin{equation}
    \label{eq:d R01}
    v(r)
    = -\Bigl[c\, \alpha_{0,1} J_1(\alpha_{0,1} r)
    + d \, \frac{\kappa}{\alpha_{0,1}}
    J_1\Bigl(\frac{\kappa}{\alpha_{0,1}} r\Bigr)\Bigr].
  \end{equation}
  Since $\frac{\kappa}{\alpha_{0,1}} r \in \intervalco{0, j_{1,1}}$,
  we have $J_1\bigl(\frac{\kappa}{\alpha_{0,1}} r\bigr) > 0$ for all $r \in
  \intervalcc{0,1}$.  If $J_1(\alpha_{0,1} r) \ge 0$,
  which is the case when $r \in
  \intervalcc{0, j_{1,1}/\alpha_{0,1}}$,
  then clearly $v(r) <0$.

  For $r \in \intervaloo{j_{1,1}/\alpha_{0,1}, 1}$,
  $\alpha_{0,1} r \in \intervaloo{j_{1,1}, \alpha_{0,1}} \subseteq
  \intervaloo{j_{1,1}, j_{1,2}}$.  Thus
  $J_1(\alpha_{0,1} r) < 0$ and the negativity of $v$ is not
  straightforward.
  Suppose on the contrary there exists a $r^* \in
  \intervaloo{j_{1,1}/\alpha_{0,1}, 1}$ such that $v(r^*) = 0$.
  A simple computation using \eqref{A:6}  shows that $v$ solves
  \begin{equation}
    \label{partial R}
    \begin{split}
      -\partial_r^2 v- \frac{1}{r} \partial_r v
      + \Bigl(\frac{1}{r^2}- \frac{\kappa^2}{\alpha_{0,1}^2}\Bigr) v
      &= -c \alpha_{0,1} \Bigl( \alpha_{0,1}^2
      - \frac{\kappa^2}{\alpha_{0,1}^2} \Bigr)
      J_1(\alpha_{0,1} r),
      \\
      v(r^*)=0, &\quad v(1)=0.
    \end{split}
  \end{equation}
The right hand side is positive on $\intervaloc{r^*, 1}$.  Moreover,
  the problem can be rewritten under the form
  \begin{equation}
    \label{Laplacian partial R}
    \begin{array}{c}
      \displaystyle
			-\Delta v
      + \Bigl(\frac{1}{r^2}- \frac{\kappa^2}{\alpha_{0,1}^2}\Bigr) v
      = -c \alpha_{0,1} \Bigl( \alpha_{0,1}^2
      - \frac{\kappa^2}{\alpha_{0,1}^2} \Bigr)
      J_1(\alpha_{0,1} \abs{x}),
      \\
      v=0 \quad \text{ on } \partial A^*,
    \end{array}
  \end{equation}
  where $A^*:= \{x\in \IR^2 \mid r^*< \abs{x} < 1\}$.
  Let us prove that $v\ge 0$ on $A^*$.  
  
  Recall that ${\kappa^2}/{\alpha_{0,1}^2} = \alpha_{0,-1}^2 < j_{1,1}^2$ where
  $j_{1,1}^2$ is the first eigenvalue of $-\Delta +\frac{1}{r^2}$ on
  the unit ball with zero Dirichlet boundary conditions (with
  eigenfunction $J_{1}(j_{1,1}r)$).  As the first eigenvalue of $-\Delta
  +\frac{1}{r^2}$ on the unit ball is less than the first eigenvalue of $-\Delta
  +\frac{1}{r^2}$ on the annulus $A^*$, 
	we deduce, by the maximum principle, that $v\geq0$ on $A^*$
	(see \cite{Walter} or \cite[Theorem 2.8]{Du}).
	
  This implies that $\partial_r v(1)\leq 0$, i.e., $\partial^2_r
  R_{0,1}(1)\le 0$.  Moreover, $\partial_r v(1) \ne 0$ because,
  otherwise, \eqref{partial R} evaluated at $r=1$ would give
  $-\partial_r^2v(1) > 0$ which would imply that $v(r) < 0$
  for $r$ close to~$1$.  Thus $v'(1)=\partial_r^2 R_{0,1}(1) < 0$.

  Since $R_{0,1}$ satisfies
  \begin{equation*}
    -\partial_r^2 R_{0,1} - \frac{1}{r} \partial_r R_{0,1}
    - \Bigl(\frac{\kappa}{\alpha_{0,1}}\Bigr)^2 R_{0,1}
    = c \Bigl( \alpha_{0,1}^2 - \frac{\kappa^2}{\alpha_{0,1}^2} \Bigr)
    J_0(\alpha_{0,1} r),    
  \end{equation*}
  the evaluation in $r=1$ gives a contradiction, as $R_{0,1}(1)=0$,
  $\partial_r R_{0,1}(1)=0$ and $J_0(\alpha_{0,1}) \le 0$ (recall that
  $\alpha_{0,1}\in \intervaloc{j_{1,1}, j_{0,2}}$).
	
  In conclusion $v=\partial_rR_{0,1}<0$ on $\intervaloo{0,1}$ and
  hence $R_{0,1}>0$ on $\intervalco{0,1}$.
\end{proof}

\begin{Rem}
  For $\kappa > j_{0,1} j_{0,2}$, the function $R_{0,1}$ changes sign
  as illustrated by Figure~\ref{fig:R01}.
\end{Rem}

\begin{figure}[ht]
  \centering
  \begin{tikzpicture}[x=16em, y=7em]
    \draw[->] (-0.03, 0) -- (1.15, 0) node[below]{$r$};
    \draw[->] (0, -0.3) -- (0, 1.2);
    \begin{scope}[dashed]
      \draw plot coordinates { (0.000000, 1.000000) (0.010101, 0.999733) (0.020202, 0.998932) (0.030303, 0.997599) (0.040404, 0.995735) (0.050505, 0.993341) (0.060606, 0.990421) (0.070707, 0.986978) (0.080808, 0.983016) (0.090909, 0.978538) (0.101010, 0.973551) (0.111111, 0.968060) (0.121212, 0.962071) (0.131313, 0.955590) (0.141414, 0.948625) (0.151515, 0.941184) (0.161616, 0.933275) (0.171717, 0.924907) (0.181818, 0.916089) (0.191919, 0.906831) (0.202020, 0.897143) (0.212121, 0.887037) (0.222222, 0.876523) (0.232323, 0.865613) (0.242424, 0.854319) (0.252525, 0.842654) (0.262626, 0.830631) (0.272727, 0.818263) (0.282828, 0.805563) (0.292929, 0.792546) (0.303030, 0.779226) (0.313131, 0.765618) (0.323232, 0.751737) (0.333333, 0.737597) (0.343434, 0.723215) (0.353535, 0.708606) (0.363636, 0.693787) (0.373737, 0.678772) (0.383838, 0.663579) (0.393939, 0.648224) (0.404040, 0.632724) (0.414141, 0.617096) (0.424242, 0.601355) (0.434343, 0.585520) (0.444444, 0.569607) (0.454545, 0.553634) (0.464646, 0.537616) (0.474747, 0.521572) (0.484848, 0.505519) (0.494949, 0.489472) (0.505051, 0.473450) (0.515152, 0.457468) (0.525253, 0.441545) (0.535354, 0.425695) (0.545455, 0.409936) (0.555556, 0.394284) (0.565657, 0.378755) (0.575758, 0.363365) (0.585859, 0.348130) (0.595960, 0.333064) (0.606061, 0.318183) (0.616162, 0.303502) (0.626263, 0.289035) (0.636364, 0.274797) (0.646465, 0.260802) (0.656566, 0.247062) (0.666667, 0.233592) (0.676768, 0.220404) (0.686869, 0.207511) (0.696970, 0.194924) (0.707071, 0.182656) (0.717172, 0.170717) (0.727273, 0.159118) (0.737374, 0.147869) (0.747475, 0.136980) (0.757576, 0.126462) (0.767677, 0.116321) (0.777778, 0.106567) (0.787879, 0.097208) (0.797980, 0.088251) (0.808081, 0.079702) (0.818182, 0.071568) (0.828283, 0.063855) (0.838384, 0.056568) (0.848485, 0.049711) (0.858586, 0.043289) (0.868687, 0.037305) (0.878788, 0.031763) (0.888889, 0.026664) (0.898990, 0.022010) (0.909091, 0.017804) (0.919192, 0.014045) (0.929293, 0.010734) (0.939394, 0.007870) (0.949495, 0.005453) (0.959596, 0.003482) (0.969697, 0.001953) (0.979798, 0.000866) (0.989899, 0.000216) (1.000000, 0.000000) };
\draw plot coordinates { (0.000000, 1.000000) (0.010101, 0.999664) (0.020202, 0.998655) (0.030303, 0.996976) (0.040404, 0.994629) (0.050505, 0.991618) (0.060606, 0.987948) (0.070707, 0.983623) (0.080808, 0.978652) (0.090909, 0.973042) (0.101010, 0.966802) (0.111111, 0.959941) (0.121212, 0.952470) (0.131313, 0.944402) (0.141414, 0.935748) (0.151515, 0.926522) (0.161616, 0.916738) (0.171717, 0.906412) (0.181818, 0.895560) (0.191919, 0.884198) (0.202020, 0.872343) (0.212121, 0.860015) (0.222222, 0.847231) (0.232323, 0.834012) (0.242424, 0.820376) (0.252525, 0.806346) (0.262626, 0.791941) (0.272727, 0.777185) (0.282828, 0.762097) (0.292929, 0.746702) (0.303030, 0.731021) (0.313131, 0.715077) (0.323232, 0.698895) (0.333333, 0.682497) (0.343434, 0.665907) (0.353535, 0.649148) (0.363636, 0.632245) (0.373737, 0.615221) (0.383838, 0.598099) (0.393939, 0.580905) (0.404040, 0.563660) (0.414141, 0.546389) (0.424242, 0.529115) (0.434343, 0.511861) (0.444444, 0.494648) (0.454545, 0.477501) (0.464646, 0.460440) (0.474747, 0.443487) (0.484848, 0.426662) (0.494949, 0.409988) (0.505051, 0.393483) (0.515152, 0.377167) (0.525253, 0.361059) (0.535354, 0.345177) (0.545455, 0.329538) (0.555556, 0.314160) (0.565657, 0.299059) (0.575758, 0.284249) (0.585859, 0.269745) (0.595960, 0.255562) (0.606061, 0.241711) (0.616162, 0.228206) (0.626263, 0.215057) (0.636364, 0.202274) (0.646465, 0.189868) (0.656566, 0.177846) (0.666667, 0.166217) (0.676768, 0.154986) (0.686869, 0.144161) (0.696970, 0.133745) (0.707071, 0.123742) (0.717172, 0.114157) (0.727273, 0.104989) (0.737374, 0.096242) (0.747475, 0.087915) (0.757576, 0.080007) (0.767677, 0.072516) (0.777778, 0.065442) (0.787879, 0.058779) (0.797980, 0.052524) (0.808081, 0.046671) (0.818182, 0.041216) (0.828283, 0.036150) (0.838384, 0.031468) (0.848485, 0.027160) (0.858586, 0.023219) (0.868687, 0.019633) (0.878788, 0.016393) (0.888889, 0.013489) (0.898990, 0.010908) (0.909091, 0.008638) (0.919192, 0.006668) (0.929293, 0.004983) (0.939394, 0.003570) (0.949495, 0.002416) (0.959596, 0.001505) (0.969697, 0.000823) (0.979798, 0.000355) (0.989899, 0.000086) (1.000000, -0.000000) };
      \draw (0.9, 1) -- +(0.1, 0) node[right]{%
        $\kappa \in \intervaloo{0, j_{0,1} j_{0,2}}$};
    \end{scope}
    \begin{scope}[color=black!30]
      \input{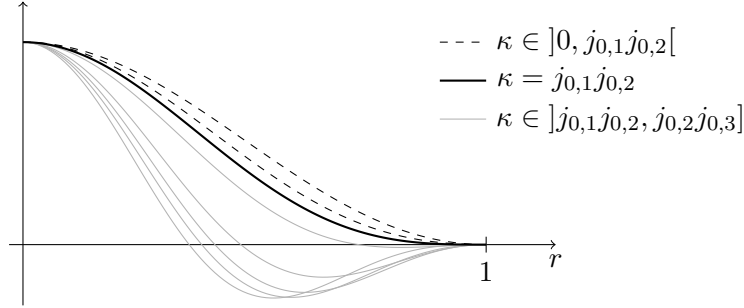}
      \draw (0.9, 0.62) -- +(0.1, 0) node[right, color=black]{%
        $\kappa \in \intervaloc{j_{0,1} j_{0,2}, j_{0,2} j_{0,3}}$};
    \end{scope}
    \draw[thick] plot file{data/R01-2.dat};
    \draw[thick] (0.9, 0.81) -- +(0.1, 0) node[right]{%
      $\kappa = j_{0,1} j_{0,2}$};

    \draw (1, 3pt) -- (1, -3pt) node[below]{$1$};
  \end{tikzpicture}

  \caption{Graph of $R_{0,1}$ for various values of~$\kappa$.}
  \label{fig:R01}
\end{figure}

\begin{Lem}
  \label{Rk1}
  Let $0< \kappa \le j_{k,1} j_{k,2}$ and $R_{k,1}$ be defined as in
  Theorem~\ref{1st eigen} (or~\ref{1st eigen2}).  Then
  $\abs{R_{k,1}(r)}$ is positive for~$r \in \intervaloo{0,1}$.
\end{Lem}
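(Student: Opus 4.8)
The plan is to follow the architecture of the proof of Lemma~\ref{R01}, but since only nonvanishing (not monotonicity) is required, to argue directly on $R_{k,1}$ rather than on its derivative, replacing the scalar maximum principle by a test against the first eigenfunction of the $k$-th angular mode. Write $\alpha := \alpha_{k,1}$ and $\beta := \kappa/\alpha = \alpha_{k,-1}$, so that $R_{k,1}(r) = c\,J_k(\alpha r) + d\,J_k(\beta r)$ with $(c,d)$ a nontrivial solution of~\eqref{eq:R bd cond}. By Remark~\ref{Range of alpha_k1}, for $0 < \kappa \le j_{k,1}j_{k,2}$ one has $\beta \in \intervaloc{0, j_{k,1}}$ and $\alpha \in \intervaloc{j_{k+1,1}, j_{k,2}}$; in particular $\beta \le j_{k,1} < \alpha$. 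Inspecting~\eqref{eq:R bd cond} together with the signs $J_k(\beta) > 0$ and $J_k(\alpha) < 0$ (resp. the opposite signs of $J_k'(\beta)$ and $J_k'(\alpha)$ when $\kappa = j_{k,1}j_{k,2}$, where $J_k$ vanishes at both points) shows that $c$ and $d$ may be chosen of the same sign, so I normalize $c, d > 0$.

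First I would dispose of $\intervaloo{0, r_1}$, where $r_1 := j_{k,1}/\alpha < 1$: there $\beta r < j_{k,1}$ and $\alpha r < j_{k,1}$, so both Bessel terms are positive, whence $R_{k,1} > 0$ on $\intervaloo{0, r_1}$ and $R_{k,1}(r_1) = d\,J_k(\beta r_1) > 0$. Thus every possible zero lies in $\intervaloo{r_1, 1}$. The mechanism controlling this region is the factorization of Section~\ref{Sect1}: since $(\Delta + \alpha^2)\bigl(J_k(\alpha r)\e^{\i k\theta}\bigr)=0$ and $(\Delta + \beta^2)\bigl(J_k(\beta r)\e^{\i k\theta}\bigr)=0$, the function $u := R_{k,1}(r)\e^{\i k\theta}$ satisfies $(\Delta + \beta^2) u = W(r)\e^{\i k\theta}$ with $W(r) := c(\beta^2-\alpha^2)J_k(\alpha r)$; equivalently $R_{k,1}$ solves the radial equation $R_{k,1}'' + \tfrac1r R_{k,1}' - \tfrac{k^2}{r^2}R_{k,1} + \beta^2 R_{k,1} = W$. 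Because $\alpha r \in \intervaloo{j_{k,1}, j_{k,2}}$ for $r \in \intervaloo{r_1,1}$, one has $J_k(\alpha r) < 0$ there, and hence $W > 0$ on $\intervaloo{r_1,1}$.

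Next I would pin down the sign of $R_{k,1}$ near $r = 1$. Evaluating the radial equation at $r=1$ and using $R_{k,1}(1)=R_{k,1}'(1)=0$ gives $R_{k,1}''(1) = W(1) = c(\beta^2-\alpha^2)J_k(\alpha)$, which is $>0$ when $\kappa < j_{k,1}j_{k,2}$ (then $J_k(\alpha)<0$); differentiating the equation once more and using $W(1)=0$ at the endpoint $\kappa=j_{k,1}j_{k,2}$ yields $R_{k,1}'''(1) = W'(1) = c(\beta^2-\alpha^2)\alpha J_k'(\alpha) < 0$. In either case $R_{k,1}(r) > 0$ for $r$ slightly below $1$. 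Now assume, for contradiction, that $R_{k,1}$ has a zero in $\intervaloo{0,1}$ and let $b$ be its largest zero; by the two previous paragraphs $b \in \intervaloo{r_1, 1}$, $R_{k,1}(b)=0=R_{k,1}(1)$, $R_{k,1} > 0$ on $\intervaloo{b,1}$, and $W > 0$ on $\intervaloo{b,1}$.

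Finally I would derive the contradiction by testing against the first radial eigenfunction of the $k$-th mode. Let $\Phi_1 > 0$ be the first Dirichlet eigenfunction of $-\Delta$ among functions of the form $\Phi(r)\e^{\i k\theta}$ on the annulus $A_b := \{x\in\IR^2 : b < \abs{x} < 1\}$, with eigenvalue $\mu_1$; since the corresponding eigenvalue on the full disk $D_1$ equals $j_{k,1}^2$ and $A_b \subsetneq D_1$, strict domain monotonicity gives $\mu_1 > j_{k,1}^2 \ge \beta^2$. Multiplying the radial equation for $R_{k,1}$ by $r\,\Phi_1$, integrating over $\intervaloo{b,1}$, and using the self-adjointness of the Sturm--Liouville operator (the boundary terms at $b$ and $1$ vanish because $R_{k,1}$ and $\Phi_1$ both vanish there) yields $\int_b^1 W\,\Phi_1\,r\intd r = (\beta^2 - \mu_1)\int_b^1 R_{k,1}\,\Phi_1\,r\intd r$. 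The left-hand side is positive ($W, \Phi_1 > 0$) while the right-hand side is negative ($\beta^2-\mu_1 < 0$ and $R_{k,1}, \Phi_1 > 0$), a contradiction. Hence $R_{k,1}$ has no zero in $\intervaloo{0,1}$, so $\abs{R_{k,1}}>0$ there. The main obstacle to get right is that one cannot invoke the plain maximum principle for $-\Delta-\beta^2$ on $A_b$, since its full first Dirichlet eigenvalue may fall below $\beta^2$ when $k \ge 1$: the comparison must be performed inside the fixed angular mode, where the relevant eigenvalue is $\mu_1 > j_{k,1}^2$. The endpoint $\kappa=j_{k,1}j_{k,2}$, where $W(1)=0$ forces the sign near $r=1$ to be read off from the third derivative, is the other delicate point.
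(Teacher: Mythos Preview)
Your proof is correct and follows essentially the same strategy as the paper.  The only cosmetic difference is the way the key maximum-principle step is packaged: the paper applies the standard maximum principle for $-\Delta-\alpha_{k,-1}^2$ on the \emph{annular sector} $A_k^+=\{r^*<r<1,\ 0<\theta<\pi/k\}$ (where the first Dirichlet eigenfunction is $\Phi_1(r)\sin(k\theta)$, with eigenvalue exactly your $\mu_1$), whereas you carry out the equivalent computation purely radially by testing against $\Phi_1$. Your remark that ``the plain maximum principle on the full annulus $A_b$ may fail for $k\ge1$'' is precisely why the paper passes to the sector; both devices amount to restricting the comparison to the $k$-th angular mode.
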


\begin{proof}
  Recall that  by equation~\eqref{eq R}, we know that
  \begin{equation*}
    R_{k,1}(r)=cJ_k(\alpha_{k,1}r)+d J_k(\alpha_{k,-1} r)
  \end{equation*}
  where the real numbers $c$ and $d$ solve the linear degenerate
  system~\eqref{eq:R bd cond} with  $\alpha = \alpha_{k,1}$. 
  Observe that, by Remark \ref{Range of alpha_k1}, we have
  $0 < \alpha_{k,-1} \le j_{k,1} < j_{k+1,1} < \alpha_{k,1} \le
  j_{k,2} < j_{k+1,2}$,
  and hence $J_{k+1}(\alpha_{k,1}) < 0$
  and $J_{k+1}(\alpha_{k,-1}) > 0$.  Using \eqref{A:4}, one deduces
  that any solution $(c,d)$ to~\eqref{eq:R bd cond} must also satisfy
  \begin{equation*}
    c \, \alpha_{k,1} J_{k+1}(\alpha_{k,1})
    + d \frac{\kappa}{\alpha_{k,1}} J_{k+1}\Bigl(
    \frac{\kappa}{\alpha_{k,1}} \Bigr) = 0.
  \end{equation*}
  Thus one can take for example for $c$ and $d$:
  \begin{equation*}
    c := \frac{\kappa}{\alpha_{k,1}} J_{k+1}\Bigl(
    \frac{\kappa}{\alpha_{k,1}} \Bigr) > 0
    \qquad\text{and}\qquad
    d := -\alpha_{k,1} J_{k+1}(\alpha_{k,1}) > 0.
  \end{equation*}
  We want to show that $R_{k,1} >0$ on $\intervaloo{0,1}$.
  Since, for all $r \in \intervaloo{0,1}$,
  $\alpha_{k,-1} r \in \intervaloo{0, j_{k,1}}$, we have
  $J_k\bigl(\alpha_{k,-1} r\bigr) > 0$.
  If $r$ is such that $J_k(\alpha_{k,1} r) \ge 0$, i.e., if
  $r \in \intervaloc{0, j_{k,1}/\alpha_{k,1}}$,
  then clearly $R_{k,1}(r) > 0$.

  For $r \in \intervaloo{j_{k,1}/\alpha_{k,1}, 1}$, we have
  $\alpha_{k,1} r \in \intervaloo{j_{k,1}, \alpha_{k,1}}
  \subseteq \intervaloo{j_{k,1}, j_{k,2}}$.  Thus
  $J_k(\alpha_{k,1} r) < 0$ and the positivity of $R_{k,1}$ is not
  straightforward.
  Suppose on the contrary there exists a 
  $r^* \in \intervaloo{j_{k,1}/\alpha_{k,1}, 1}$ such that
  $R_{k,1}(r^*) = 0$.
  A simple computation using \eqref{A:6}  shows:
  \begin{equation}
    \label{Laplacian R alpha11}
    \begin{cases}
      -\Delta \bigl(R_{k,1}\sin(k\theta) \bigr)
      - \alpha_{k,-1}^2 R_{k,1}\sin(k\theta) \\[1\jot]
      \hspace{6em}
      = c \bigl( \alpha_{k,1}^2 - \alpha_{k,-1}^2 \bigr)
      J_k(\alpha_{k,1} r) \sin(k\theta),
      \quad \text{ in } A^+_k,
      \\[2mm]
      R_{k,1}(r) \sin(k\theta)=0,
      \quad\text{ on }\partial A^+_k,
    \end{cases}
  \end{equation}
  where $A^+_k := \bigl\{(r\cos(\theta), r\sin(\theta))\bigm|
  r^*<r<1,\ \theta\in \intervaloo{0, \pi/k} \bigr\}$.

  The right hand side is negative for $r\in \intervaloo{r^*, 1}$
  and $\theta\in \intervaloo{0, \pi/k}$.
  Since $\alpha_{k,-1}^2 \le j_{k,1}^2$,
  where $j_{k,1}^2$ is the first eigenvalue of $-\Delta$ on
  \begin{equation*}
    D^+
    := \bigl\{ (r\cos(k\theta), r\sin(k\theta)) \bigm|
    0<r<1,\
    \theta\in \intervaloo{0, \pi/k} \bigr\},
  \end{equation*}
  with zero Dirichlet boundary conditions 
  (with positive first eigenfunction $J_k(j_{k,1}r)\sin(k \theta)$),
  which is less than the first eigenvalue of $-\Delta$ on 
  $A^+_k \subsetneq D^+$, the maximum principle applies
  (see~\cite{Walter} or~\cite[Theorem 2.8]{Du})
  and we conclude that $R_{k,1}(r) \sin(k\theta) < 0$ on
  $A^+_k$.
  Evaluating~\eqref{Laplacian R alpha11} for $r = 1$ and taking into account
  the clamped boundary conditions $R_{k,1}(1) = 0 = \partial_r R_{k,1}(1)$, one
  deduces $\partial_r^2 R_{k,1}(1)
  = -c ( \alpha_{k,1}^2 - \alpha_{k,-1}^2 )
  J_k(\alpha_{k,1}) \ge 0$.
  If $\partial_r^2 R_{k,1}(1) > 0$, this contradicts $R_{k,1}<0$.
  If $\partial_r^2 R_{k,1}(1) = 0$, i.e., $\alpha_{k,1} = j_{k,2}$,
  differentiating \eqref{Laplacian R alpha11} w.r.t.\ $r$ and
  evaluating at $r=1$ yields
  \begin{equation*}
    \partial_r^3 R_{k,1}(1)
    = -c ( \alpha_{k,1}^2 - \alpha_{k,-1}^2 )\alpha_{k,1} J_k'(j_{k,2})
    < 0
  \end{equation*}
  which again contradicts $R_{k,1}(r)<0$.
  This proves that $R_{k,1}>0$ on $\intervaloo{0,1}$.
\end{proof}

\begin{figure}[hbt]
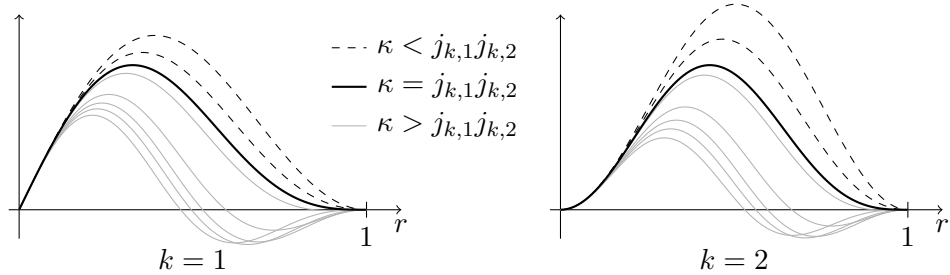

  \centering
  \begin{tikzpicture}[x=12em, y=5em]
    \draw[->] (-0.03, 0) -- (1.1, 0) node[below]{$r$};
    \draw[->] (0, -0.2) -- (0, 1.35);
    \draw (1, 3pt) -- (1, -3pt) node[below]{$1$};
    \node[below] at (0.5, -0.2) {$k=1$};
    \begin{scope}[dashed]
      \draw plot coordinates { (0.000000, 0.000000) (0.010101, 0.049911) (0.020202, 0.099733) (0.030303, 0.149377) (0.040404, 0.198755) (0.050505, 0.247779) (0.060606, 0.296363) (0.070707, 0.344419) (0.080808, 0.391865) (0.090909, 0.438615) (0.101010, 0.484589) (0.111111, 0.529706) (0.121212, 0.573888) (0.131313, 0.617058) (0.141414, 0.659144) (0.151515, 0.700073) (0.161616, 0.739777) (0.171717, 0.778189) (0.181818, 0.815247) (0.191919, 0.850889) (0.202020, 0.885060) (0.212121, 0.917705) (0.222222, 0.948775) (0.232323, 0.978221) (0.242424, 1.006001) (0.252525, 1.032075) (0.262626, 1.056407) (0.272727, 1.078966) (0.282828, 1.099723) (0.292929, 1.118654) (0.303030, 1.135740) (0.313131, 1.150964) (0.323232, 1.164314) (0.333333, 1.175783) (0.343434, 1.185367) (0.353535, 1.193066) (0.363636, 1.198886) (0.373737, 1.202834) (0.383838, 1.204923) (0.393939, 1.205170) (0.404040, 1.203596) (0.414141, 1.200226) (0.424242, 1.195087) (0.434343, 1.188213) (0.444444, 1.179639) (0.454545, 1.169404) (0.464646, 1.157552) (0.474747, 1.144129) (0.484848, 1.129186) (0.494949, 1.112774) (0.505051, 1.094950) (0.515152, 1.075773) (0.525253, 1.055304) (0.535354, 1.033607) (0.545455, 1.010750) (0.555556, 0.986801) (0.565657, 0.961830) (0.575758, 0.935912) (0.585859, 0.909121) (0.595960, 0.881533) (0.606061, 0.853226) (0.616162, 0.824279) (0.626263, 0.794772) (0.636364, 0.764787) (0.646465, 0.734404) (0.656566, 0.703707) (0.666667, 0.672776) (0.676768, 0.641696) (0.686869, 0.610548) (0.696970, 0.579414) (0.707071, 0.548377) (0.717172, 0.517516) (0.727273, 0.486913) (0.737374, 0.456647) (0.747475, 0.426795) (0.757576, 0.397434) (0.767677, 0.368639) (0.777778, 0.340483) (0.787879, 0.313039) (0.797980, 0.286374) (0.808081, 0.260557) (0.818182, 0.235651) (0.828283, 0.211721) (0.838384, 0.188824) (0.848485, 0.167019) (0.858586, 0.146359) (0.868687, 0.126895) (0.878788, 0.108676) (0.888889, 0.091746) (0.898990, 0.076146) (0.909091, 0.061916) (0.919192, 0.049089) (0.929293, 0.037698) (0.939394, 0.027769) (0.949495, 0.019326) (0.959596, 0.012391) (0.969697, 0.006980) (0.979798, 0.003105) (0.989899, 0.000777) (1.000000, 0.000000) };
\draw plot coordinates { (0.000000, 0.000000) (0.010101, 0.049907) (0.020202, 0.099703) (0.030303, 0.149278) (0.040404, 0.198522) (0.050505, 0.247325) (0.060606, 0.295581) (0.070707, 0.343183) (0.080808, 0.390028) (0.090909, 0.436013) (0.101010, 0.481041) (0.111111, 0.525014) (0.121212, 0.567840) (0.131313, 0.609429) (0.141414, 0.649695) (0.151515, 0.688557) (0.161616, 0.725936) (0.171717, 0.761760) (0.181818, 0.795959) (0.191919, 0.828470) (0.202020, 0.859233) (0.212121, 0.888196) (0.222222, 0.915309) (0.232323, 0.940529) (0.242424, 0.963819) (0.252525, 0.985147) (0.262626, 1.004486) (0.272727, 1.021815) (0.282828, 1.037121) (0.292929, 1.050392) (0.303030, 1.061627) (0.313131, 1.070828) (0.323232, 1.078001) (0.333333, 1.083161) (0.343434, 1.086327) (0.353535, 1.087523) (0.363636, 1.086779) (0.373737, 1.084129) (0.383838, 1.079614) (0.393939, 1.073278) (0.404040, 1.065170) (0.414141, 1.055345) (0.424242, 1.043859) (0.434343, 1.030775) (0.444444, 1.016159) (0.454545, 1.000080) (0.464646, 0.982610) (0.474747, 0.963825) (0.484848, 0.943803) (0.494949, 0.922624) (0.505051, 0.900371) (0.515152, 0.877128) (0.525253, 0.852983) (0.535354, 0.828021) (0.545455, 0.802330) (0.555556, 0.776001) (0.565657, 0.749121) (0.575758, 0.721780) (0.585859, 0.694067) (0.595960, 0.666069) (0.606061, 0.637874) (0.616162, 0.609568) (0.626263, 0.581235) (0.636364, 0.552958) (0.646465, 0.524818) (0.656566, 0.496893) (0.666667, 0.469260) (0.676768, 0.441992) (0.686869, 0.415158) (0.696970, 0.388826) (0.707071, 0.363059) (0.717172, 0.337918) (0.727273, 0.313458) (0.737374, 0.289733) (0.747475, 0.266790) (0.757576, 0.244674) (0.767677, 0.223425) (0.777778, 0.203077) (0.787879, 0.183664) (0.797980, 0.165210) (0.808081, 0.147739) (0.818182, 0.131268) (0.828283, 0.115811) (0.838384, 0.101375) (0.848485, 0.087964) (0.858586, 0.075580) (0.868687, 0.064216) (0.878788, 0.053865) (0.888889, 0.044512) (0.898990, 0.036141) (0.909091, 0.028730) (0.919192, 0.022254) (0.929293, 0.016685) (0.939394, 0.011991) (0.949495, 0.008136) (0.959596, 0.005081) (0.969697, 0.002786) (0.979798, 0.001205) (0.989899, 0.000293) (1.000000, -0.000000) };
      \draw (0.9, 1.12) -- +(0.1, 0) node[right]{%
        $\kappa < j_{k,1} j_{k,2}$};
    \end{scope}
    \begin{scope}[color=black!30]
      \input{data/R11-3.tex}
      \draw (0.9, 0.58) -- +(0.1, 0) node[right, color=black]{%
        $\kappa > j_{k,1} j_{k,2}$};
    \end{scope}
    \draw[thick] plot file{data/R11-2.dat};
    \draw[thick] (0.9, 0.85) -- +(0.1, 0) node[right]{%
      $\kappa = j_{k,1} j_{k,2}$};

    \begin{scope}[xshift=18.7em]
      \draw[->] (-0.03, 0) -- (1.1, 0) node[below]{$r$};
      \draw[->] (0, -0.2) -- (0, 1.35);
      \draw (1, 3pt) -- (1, -3pt) node[below]{$1$};
      \node[below] at (0.5, -0.2) {$k=2$};
      \begin{scope}[dashed]
        \draw plot coordinates { (0.000000, 0.000000) (0.010101, 0.001398) (0.020202, 0.005586) (0.030303, 0.012547) (0.040404, 0.022255) (0.050505, 0.034671) (0.060606, 0.049747) (0.070707, 0.067423) (0.080808, 0.087630) (0.090909, 0.110289) (0.101010, 0.135311) (0.111111, 0.162598) (0.121212, 0.192042) (0.131313, 0.223529) (0.141414, 0.256933) (0.151515, 0.292126) (0.161616, 0.328967) (0.171717, 0.367313) (0.181818, 0.407013) (0.191919, 0.447911) (0.202020, 0.489846) (0.212121, 0.532653) (0.222222, 0.576164) (0.232323, 0.620209) (0.242424, 0.664613) (0.252525, 0.709202) (0.262626, 0.753800) (0.272727, 0.798232) (0.282828, 0.842323) (0.292929, 0.885899) (0.303030, 0.928788) (0.313131, 0.970820) (0.323232, 1.011831) (0.333333, 1.051657) (0.343434, 1.090142) (0.353535, 1.127132) (0.363636, 1.162482) (0.373737, 1.196052) (0.383838, 1.227707) (0.393939, 1.257322) (0.404040, 1.284780) (0.414141, 1.309970) (0.424242, 1.332792) (0.434343, 1.353154) (0.444444, 1.370975) (0.454545, 1.386182) (0.464646, 1.398713) (0.474747, 1.408518) (0.484848, 1.415555) (0.494949, 1.419795) (0.505051, 1.421219) (0.515152, 1.419818) (0.525253, 1.415596) (0.535354, 1.408568) (0.545455, 1.398759) (0.555556, 1.386204) (0.565657, 1.370952) (0.575758, 1.353060) (0.585859, 1.332596) (0.595960, 1.309640) (0.606061, 1.284279) (0.616162, 1.256611) (0.626263, 1.226744) (0.636364, 1.194795) (0.646465, 1.160887) (0.656566, 1.125154) (0.666667, 1.087735) (0.676768, 1.048778) (0.686869, 1.008435) (0.696970, 0.966867) (0.707071, 0.924237) (0.717172, 0.880715) (0.727273, 0.836473) (0.737374, 0.791689) (0.747475, 0.746541) (0.757576, 0.701210) (0.767677, 0.655880) (0.777778, 0.610732) (0.787879, 0.565951) (0.797980, 0.521719) (0.808081, 0.478217) (0.818182, 0.435624) (0.828283, 0.394116) (0.838384, 0.353866) (0.848485, 0.315044) (0.858586, 0.277813) (0.868687, 0.242334) (0.878788, 0.208758) (0.888889, 0.177234) (0.898990, 0.147900) (0.909091, 0.120891) (0.919192, 0.096329) (0.929293, 0.074333) (0.939394, 0.055008) (0.949495, 0.038454) (0.959596, 0.024759) (0.969697, 0.014003) (0.979798, 0.006254) (0.989899, 0.001570) (1.000000, -0.000000) };
\draw plot coordinates { (0.000000, 0.000000) (0.010101, 0.001398) (0.020202, 0.005584) (0.030303, 0.012539) (0.040404, 0.022228) (0.050505, 0.034605) (0.060606, 0.049611) (0.070707, 0.067172) (0.080808, 0.087204) (0.090909, 0.109610) (0.101010, 0.134283) (0.111111, 0.161103) (0.121212, 0.189942) (0.131313, 0.220661) (0.141414, 0.253112) (0.151515, 0.287140) (0.161616, 0.322582) (0.171717, 0.359268) (0.181818, 0.397023) (0.191919, 0.435667) (0.202020, 0.475017) (0.212121, 0.514885) (0.222222, 0.555082) (0.232323, 0.595420) (0.242424, 0.635706) (0.252525, 0.675753) (0.262626, 0.715372) (0.272727, 0.754379) (0.282828, 0.792592) (0.292929, 0.829833) (0.303030, 0.865932) (0.313131, 0.900722) (0.323232, 0.934046) (0.333333, 0.965751) (0.343434, 0.995695) (0.353535, 1.023744) (0.363636, 1.049775) (0.373737, 1.073674) (0.383838, 1.095338) (0.393939, 1.114674) (0.404040, 1.131603) (0.414141, 1.146056) (0.424242, 1.157977) (0.434343, 1.167323) (0.444444, 1.174061) (0.454545, 1.178173) (0.464646, 1.179653) (0.474747, 1.178507) (0.484848, 1.174753) (0.494949, 1.168424) (0.505051, 1.159560) (0.515152, 1.148218) (0.525253, 1.134462) (0.535354, 1.118368) (0.545455, 1.100024) (0.555556, 1.079527) (0.565657, 1.056981) (0.575758, 1.032500) (0.585859, 1.006208) (0.595960, 0.978234) (0.606061, 0.948712) (0.616162, 0.917784) (0.626263, 0.885597) (0.636364, 0.852301) (0.646465, 0.818048) (0.656566, 0.782995) (0.666667, 0.747298) (0.676768, 0.711116) (0.686869, 0.674606) (0.696970, 0.637924) (0.707071, 0.601226) (0.717172, 0.564661) (0.727273, 0.528380) (0.737374, 0.492526) (0.747475, 0.457237) (0.757576, 0.422648) (0.767677, 0.388884) (0.777778, 0.356065) (0.787879, 0.324304) (0.797980, 0.293704) (0.808081, 0.264361) (0.818182, 0.236360) (0.828283, 0.209779) (0.838384, 0.184683) (0.848485, 0.161131) (0.858586, 0.139168) (0.868687, 0.118830) (0.878788, 0.100143) (0.888889, 0.083122) (0.898990, 0.067772) (0.909091, 0.054086) (0.919192, 0.042049) (0.929293, 0.031634) (0.939394, 0.022805) (0.949495, 0.015518) (0.959596, 0.009717) (0.969697, 0.005340) (0.979798, 0.002315) (0.989899, 0.000564) (1.000000, 0.000000) };
      \end{scope}
      \begin{scope}[color=black!30]
        \input{data/R21-3.tex}
      \end{scope}
      \draw[thick] plot file{data/R21-2.dat};
    \end{scope}
  \end{tikzpicture}

  \caption{Graph of $R_{k,1}$ for various values of~$\kappa$.}
  \label{fig:Rk1}
\end{figure}

\begin{Rem}
  When $k > 0$, $R_{k,1}$ is no longer decreasing
  (see Figure~\ref{fig:Rk1}) because
  $R_{k,1}(0) = 0$ and $R_{k,1}(1) = 0$.
\end{Rem}

Hence we have proved so far the following result.

\begin{Thm}
  \label{Thmprincipal2}
  If $0\leq \kappa < j_{0,1} j_{0,2}$, the first eigenspace is of
  dimension~$1$, any first eigenfunction $\varphi_1$
  is radial
  and $|\varphi_1|$ is positive in $\Omega$ and decreasing w.r.t.\
  $r = \abs{x}$.

  If $j_{0,1} j_{0,2}< \kappa < j_{1,1} j_{1,2}$, the first
  eigenfunctions have the form $R_{1,1}(r) \cdot (c_1 \cos\theta + c_2
  \sin\theta)$ with $R_{1,1}(r)>0$ for $r \in \intervaloo{0,1}$
  and hence have two nodal domains that are half balls.
\end{Thm}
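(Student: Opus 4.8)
The plan is to assemble the statement directly from the results already established, by identifying, for each $\kappa$-regime, which case of Theorem~\ref{1st eigen} applies and then invoking the corresponding positivity lemma. No new analysis should be required; the proof is essentially bookkeeping over the intervals.

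For the first claim, I would observe that $\intervalco{0, j_{0,1}j_{0,2}}$ is the first member of the first family of intervals treated in Theorem~\ref{1st eigen}. That theorem already yields that the first eigenspace is one–dimensional and spanned by the radial function $x \mapsto R_{0,1}(\abs{x})$, which settles the dimension and the radiality simultaneously. To obtain the positivity and monotonicity of $\abs{\varphi_1}$, I would split off the endpoint $\kappa = 0$. For $0 < \kappa < j_{0,1}j_{0,2}$ this is exactly the content of Lemma~\ref{R01}, so nothing further is needed. For $\kappa = 0$, Theorem~\ref{tserge2} gives, up to a scalar, $\varphi_1(x) = J_0(j_{1,1}\abs{x}) - J_0(j_{1,1})$; since $J_0' = -J_1 < 0$ on $\intervaloo{0, j_{1,1}}$, the map $r \mapsto J_0(j_{1,1} r)$ is strictly decreasing on $\intervalcc{0,1}$, whence $\varphi_1$ is positive on $\intervalco{0,1}$ and $\abs{\varphi_1}$ is decreasing in $r$.

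For the second claim, I would note that $\intervaloo{j_{0,1}j_{0,2}, j_{1,1}j_{1,2}}$ is precisely the $n = 0$ member of the second family of intervals in Theorem~\ref{1st eigen}, so that theorem provides that the first eigenspace has dimension~$2$ and consists of the functions $R_{1,1}(r)(c_1\cos\theta + c_2\sin\theta)$. Since this interval is contained in $\intervaloc{0, j_{1,1}j_{1,2}}$, Lemma~\ref{Rk1} applied with $k = 1$ guarantees $R_{1,1}(r) > 0$ for all $r \in \intervaloo{0,1}$. Consequently the zero set of $\varphi_1$ inside $\Omega$ reduces to the zero set of the angular factor $c_1\cos\theta + c_2\sin\theta$, which is a single diameter; removing it splits $\Omega$ into two half-balls, each a nodal domain.

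I do not anticipate a genuine obstacle, since the analytic difficulties were already absorbed into Lemmas~\ref{R01} and~\ref{Rk1} and into Theorem~\ref{1st eigen}. The only points demanding care are the boundary value $\kappa = 0$, which falls outside the hypothesis $\kappa > 0$ of Lemma~\ref{R01} and must therefore be handled by hand through the explicit eigenfunction of Theorem~\ref{tserge2}, and the verification that the vanishing of $c_1\cos\theta + c_2\sin\theta$ describes exactly one diameter, hence exactly two nodal regions, for every choice of $(c_1, c_2) \neq (0,0)$.
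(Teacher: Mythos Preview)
Your proposal is correct and follows essentially the same route as the paper: separate out $\kappa = 0$ via Theorem~\ref{tserge2}, then for $\kappa > 0$ read off the structure of the eigenspace from Theorem~\ref{1st eigen} and conclude the sign information from Lemmas~\ref{R01} and~\ref{Rk1}. Your version is a bit more explicit (spelling out why $J_0(j_{1,1}r)$ is decreasing and why the angular factor produces exactly two half-balls), but the argument is the same.
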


\begin{proof}
  The case $\kappa=0$ can be deduced from Theorem
  \ref{tserge2}. Consider then the case $\kappa>0$.
  When $\kappa \in \intervaloo{0, j_{0,1} j_{0,2}}
  \cup \intervaloo{j_{0,1} j_{0,2}, j_{1,1} j_{1,2}}$, Theorem~\ref{1st
    eigen} says that the eigenfunctions are the desired form;
  Lemmas~\ref{R01} and~\ref{Rk1} complete the proof.
\end{proof}


In order to study the evolution of $R_{k,1}$ in the next intervals, we
first prove that $R_{k,1}$ changes sign in every interval of the form
$[\frac{j_{k,i}}{\alpha_{k,1}},\frac{j_{k,i+1}}{\alpha_{k,1}}]$. In a
second step, we will prove that $R_{k,1}$ will change sign only once
on this interval. This will allow us to deduce on the exact number
of root of $\varphi_1$ according to the value of~$\kappa$.

\begin{Lem}
  \label{cserge25:02}%
  Let $k\in \IN$ and $n \in \IN\setminus\{0,1\}$ be fixed.
  Then for all $i\in \{1, \dotsc, n-1\}$ and all
  $\kappa\in \intervaloo{j_{k,n-1} j_{k,n},\, j_{k,n} j_{k,n+1}}$,
  \begin{equation*}
    R_{k,1} \Bigl(\frac{j_{k,i}}{\alpha_{k,1}}\Bigr)\,
    R_{k,1} \Bigl(\frac{j_{k,i+1}}{\alpha_{k,1}} \Bigr)
    <0.
  \end{equation*}
  where $R_{k,1}$ is defined as in Theorem~\ref{1st eigen}.
\end{Lem}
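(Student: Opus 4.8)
The plan is to evaluate $R_{k,1}$ at the prescribed points and reduce everything to the sign of a single Bessel function. Recall from~\eqref{eq R} that $R_{k,1}(r) = c\,J_k(\alpha_{k,1} r) + d\,J_k(\alpha_{k,-1} r)$, where $\alpha_{k,-1} = \kappa/\alpha_{k,1}$ (Theorem~\ref{thm:alpha kl}) and $(c,d)$ is a non-trivial solution of~\eqref{eq:R bd cond} for $\alpha = \alpha_{k,1}$. By~\eqref{serge:25/02:1} (with $n$ replaced by $n-1$), for $\kappa \in \intervaloo{j_{k,n-1}j_{k,n},\, j_{k,n}j_{k,n+1}}$ one has $\alpha_{k,1} \in \intervaloo{j_{k,n}, j_{k,n+1}}$; in particular $j_{k,i}/\alpha_{k,1} \in \intervaloo{0,1}$ for every $i \le n$, so all evaluation points are interior. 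The first step is the observation that, since $J_k(j_{k,i}) = 0$, the first summand of $R_{k,1}$ vanishes at $r = j_{k,i}/\alpha_{k,1}$, whence
\begin{equation*}
  R_{k,1}\Bigl(\frac{j_{k,i}}{\alpha_{k,1}}\Bigr)
  = d\, J_k(\mu\, j_{k,i}),
  \qquad \mu := \frac{\alpha_{k,-1}}{\alpha_{k,1}}
  = \frac{\kappa}{\alpha_{k,1}^2} \in \intervaloo{0,1} .
\end{equation*}
Moreover $d \ne 0$, because $J_k(\alpha_{k,1}) \ne 0$ (as $\alpha_{k,1}$ lies strictly between two roots of $J_k$) and $(c,d) \ne (0,0)$. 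Thus the statement reduces to proving $J_k(\mu\, j_{k,i})\, J_k(\mu\, j_{k,i+1}) < 0$ for $i = 1, \dots, n-1$.

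Next I would locate each $\mu\, j_{k,i}$ between two consecutive zeros of $J_k$, namely establish $\mu\, j_{k,i} \in \intervaloo{j_{k,i-1}, j_{k,i}}$ for $i = 1, \dots, n$ (with the convention $j_{k,0} := 0$). The upper bound is immediate since $\mu < 1$. For the lower bound, the key ingredient is that $\mu$ is increasing in $\kappa$: by Lemma~\ref{lserge25:02} the function $g_{k,1}(\kappa) = \alpha_{k,1}^2(\kappa)/\kappa$ is positive and decreasing, and $\mu = 1/g_{k,1}$. By Lemma~\ref{valalpha} (with $\ell = 1$ and $n$ replaced by $i-1$), at $\kappa_i := j_{k,i-1} j_{k,i}$ one has $\alpha_{k,1}(\kappa_i) = j_{k,i}$ and $\alpha_{k,-1}(\kappa_i) = j_{k,i-1}$, so $\mu(\kappa_i) = j_{k,i-1}/j_{k,i}$. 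Since $i \le n$ gives $\kappa_i \le j_{k,n-1}j_{k,n} < \kappa$, monotonicity of $\mu$ yields $\mu(\kappa) > \mu(\kappa_i) = j_{k,i-1}/j_{k,i}$, i.e.\ $\mu\, j_{k,i} > j_{k,i-1}$ (the case $i = 1$ being trivial).

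To conclude, I would read off the sign pattern of $J_k$. As $J_k$ is positive on $\intervaloo{0, j_{k,1}}$ and changes sign across each of its simple zeros, the inclusion $\mu\, j_{k,i} \in \intervaloo{j_{k,i-1}, j_{k,i}}$ forces $\operatorname{sign} J_k(\mu\, j_{k,i}) = (-1)^{i-1}$. Consecutive values therefore have opposite signs, and
\begin{equation*}
  R_{k,1}\Bigl(\frac{j_{k,i}}{\alpha_{k,1}}\Bigr)
  R_{k,1}\Bigl(\frac{j_{k,i+1}}{\alpha_{k,1}}\Bigr)
  = d^2\, J_k(\mu\, j_{k,i})\, J_k(\mu\, j_{k,i+1}) < 0 ,
\end{equation*}
which is the claim.

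The crux, and the step requiring care, is the lower bound $\mu\, j_{k,i} > j_{k,i-1}$. A naive attempt to bound $\mu$ below merely by its value at the left endpoint $j_{k,n-1}j_{k,n}$ of the interval and then compare the ratios $j_{k,i-1}/j_{k,i}$ would force me to prove that these ratios are monotone in $i$, i.e.\ a log-concavity property $j_{k,i}^2 \ge j_{k,i-1}j_{k,i+1}$ of the zeros; this is true but not entirely elementary, and delicate precisely for $k = 0$, where the zeros are (eventually) convex in the index. The device that sidesteps this is to compare $\mu(\kappa)$ directly with $\mu(\kappa_i)$ at the \emph{$i$-dependent} crossing value $\kappa_i = j_{k,i-1}j_{k,i}$, where Lemma~\ref{valalpha} pins down $\mu(\kappa_i) = j_{k,i-1}/j_{k,i}$ exactly; the monotonicity of $\mu$ from Lemma~\ref{lserge25:02} then does all the work, and no property of the zeros beyond their simplicity and interlacing is needed.
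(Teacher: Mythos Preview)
Your proof is correct and follows essentially the same route as the paper: reduce to the sign of $d^2 J_k(\mu j_{k,i})J_k(\mu j_{k,i+1})$ with $\mu=\kappa/\alpha_{k,1}^2$, then use the monotonicity of $\mu$ (Lemma~\ref{lserge25:02}) together with the exact values $\mu(j_{k,i-1}j_{k,i})=j_{k,i-1}/j_{k,i}$ from Lemma~\ref{valalpha} to trap $\mu j_{k,i}$ in $\intervaloo{j_{k,i-1},j_{k,i}}$. Your presentation is in fact a touch cleaner, treating the localisation $\mu j_{k,i}\in\intervaloo{j_{k,i-1},j_{k,i}}$ uniformly for $i=1,\dots,n$ rather than handling $i$ and $i+1$ separately, and your closing remark on why comparing at the $i$-dependent $\kappa_i$ avoids any appeal to log-concavity of the zeros is exactly the point.
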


\begin{proof}
First observe that
  \begin{equation*}
    R_{k,1}\Bigl(\frac{j_{k,i}}{\alpha_{k,1}}\Bigr) \,
    R_{k,1}\Bigl(\frac{j_{k,i+1}}{\alpha_{k,1}}\Bigr)
    =
    d^2 J_k\Bigl(j_{k,i}\frac{\kappa}{\alpha_{k,1}^2(\kappa)}\Bigr) \,
    J_k\Bigl(j_{k,i+1}\frac{\kappa}{\alpha_{k,1}^2(\kappa)} \Bigr).
  \end{equation*}
  Note that $d \ne 0$ because otherwise \eqref{eq:R bd cond} would
  boil down to $c J_k(\alpha) = 0 = c J'_k(\alpha)$ and so $c = 0$,
  contradicting the fact that $(c,d)$ must be non-trivial.
  We will prove that
  \begin{equation*}
    J_k\Bigl(j_{k,i}\frac{\kappa}{\alpha_{k,1}^2(\kappa)} \Bigr) \,
    J_k\Bigl(j_{k,i+1}\frac{\kappa}{\alpha_{k,1}^2(\kappa)} \Bigr)
    <0.
  \end{equation*}
  Set $h(\kappa) := {\kappa}/{\alpha_{k,1}^2(\kappa)}$.
As $i\leq n-1$, we clearly have
  \begin{equation*}
   j_{k,i-1} \, j_{k,i}  <  j_{k,i} \, j_{k,i+1} < \kappa,
 \end{equation*}
 and, because $h$ is increasing thanks to Lemma~\ref{lserge25:02},
$$
h(j_{k,i-1} j_{k,i})<h(j_{k,i} j_{k,i+1})<h(\kappa).
$$
As, by Lemma \ref{valalpha},  $h(j_{k,i-1} j_{k,i})=\frac{j_{k,i-1}}{j_{k,i}}$ and $h(j_{k,i} j_{k,i+1})=\frac{j_{k,i}}{j_{k,i+1}}$, we deduce that
  \begin{equation*}
    h(\kappa)j_{k,i} > j_{k,i-1}
    \quad\text{and}\quad
    h(\kappa)j_{k,i+1} > j_{k,i}.
  \end{equation*}
  Moreover, $h(\kappa)<1$ because
  $\alpha_{k,1}(\kappa)>\sqrt{\kappa}$.
  We conclude that
  \begin{equation*}
    j_{k,i-1} < h(\kappa)j_{k,i} < 
    j_{k,i} < h(\kappa)j_{k,i+1} < j_{k,i+1}.
  \end{equation*}
This means that $h(\kappa)j_{k,i}$ and $h(\kappa)j_{k,i+1}$ are in two 
consecutive intervals of zeros of $J_k$ and the conclusion follows.
\end{proof}

\begin{Rem}
  The previous Lemma does not readily extend to $\alpha_{k,\ell}$ with
  $\ell > 1$.  The left graph on Figure~\ref{fig:R0l kappa large}
  illustrates the result while the right one shows that even the
  number of sign changes of $R_{k,\ell}$, for $\ell > 1$,
  does not correspond to the
  number of points $( {j_{k,i}}/{\alpha_{k,\ell}} )_{i=1}^n$.
\end{Rem}

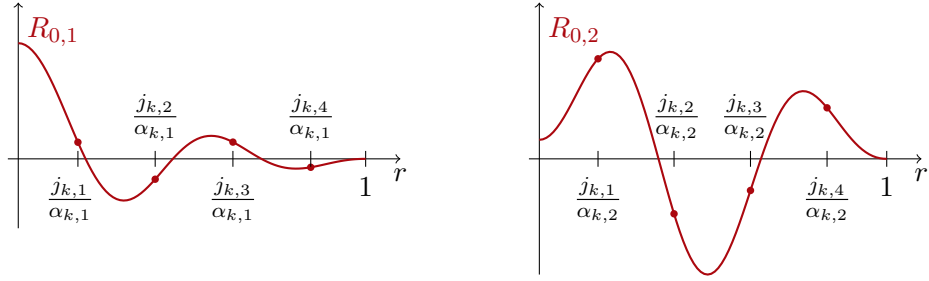
\begin{figure}[hbt]
  \centering
  \begin{tikzpicture}[x=12em, y=4em]
    \draw[->] (-0.03, 0) -- (1.1, 0) node[below]{$r$};
    \draw[->] (0, -0.6) -- (0, 1.35);
    \draw (1, 3pt) -- (1, -3pt) node[below]{$1$};
    \begin{scope}[color=darkred, thick]
      \draw[fill] (0.171709, 0.143470) circle(1pt);
\draw[fill] (0.394144, -0.176125) circle(1pt);
\draw[fill] (0.617893, 0.144705) circle(1pt);
\draw[fill] (0.841939, -0.072885) circle(1pt);
\draw plot coordinates { (0.000000, 1.000000) (0.010101, 0.995954) (0.020202, 0.983868) (0.030303, 0.963898) (0.040404, 0.936298) (0.050505, 0.901420) (0.060606, 0.859709) (0.070707, 0.811694) (0.080808, 0.757980) (0.090909, 0.699242) (0.101010, 0.636212) (0.111111, 0.569671) (0.121212, 0.500434) (0.131313, 0.429343) (0.141414, 0.357250) (0.151515, 0.285008) (0.161616, 0.213458) (0.171717, 0.143416) (0.181818, 0.075665) (0.191919, 0.010942) (0.202020, -0.050075) (0.212121, -0.106768) (0.222222, -0.158593) (0.232323, -0.205085) (0.242424, -0.245862) (0.252525, -0.280631) (0.262626, -0.309188) (0.272727, -0.331422) (0.282828, -0.347310) (0.292929, -0.356922) (0.303030, -0.360412) (0.313131, -0.358014) (0.323232, -0.350041) (0.333333, -0.336874) (0.343434, -0.318955) (0.353535, -0.296783) (0.363636, -0.270896) (0.373737, -0.241870) (0.383838, -0.210307) (0.393939, -0.176820) (0.404040, -0.142029) (0.414141, -0.106550) (0.424242, -0.070983) (0.434343, -0.035904) (0.444444, -0.001857) (0.454545, 0.030652) (0.464646, 0.061165) (0.474747, 0.089275) (0.484848, 0.114633) (0.494949, 0.136950) (0.505051, 0.156001) (0.515152, 0.171625) (0.525253, 0.183725) (0.535354, 0.192269) (0.545455, 0.197285) (0.555556, 0.198864) (0.565657, 0.197147) (0.575758, 0.192329) (0.585859, 0.184648) (0.595960, 0.174382) (0.606061, 0.161840) (0.616162, 0.147357) (0.626263, 0.131287) (0.636364, 0.113993) (0.646465, 0.095843) (0.656566, 0.077203) (0.666667, 0.058427) (0.676768, 0.039855) (0.686869, 0.021804) (0.696970, 0.004565) (0.707071, -0.011602) (0.717172, -0.026472) (0.727273, -0.039857) (0.737374, -0.051609) (0.747475, -0.061621) (0.757576, -0.069824) (0.767677, -0.076194) (0.777778, -0.080740) (0.787879, -0.083512) (0.797980, -0.084592) (0.808081, -0.084094) (0.818182, -0.082156) (0.828283, -0.078941) (0.838384, -0.074628) (0.848485, -0.069410) (0.858586, -0.063486) (0.868687, -0.057061) (0.878788, -0.050334) (0.888889, -0.043502) (0.898990, -0.036748) (0.909091, -0.030241) (0.919192, -0.024133) (0.929293, -0.018554) (0.939394, -0.013610) (0.949495, -0.009382) (0.959596, -0.005927) (0.969697, -0.003271) (0.979798, -0.001418) (0.989899, -0.000344) (1.000000, 0.000000) };
      \node at (0.1, 1.1) {$R_{0,1}$};
    \end{scope}
    \draw (0.171709, 3pt) -- +(0, -6pt) node[below, xshift=-3pt]{%
      $\frac{j_{k,1}}{\alpha_{k,1}}$};
    \draw (0.394144, -3pt) -- +(0, 6pt) node[above, xshift=0pt]{%
      $\frac{j_{k,2}}{\alpha_{k,1}}$};
    \draw (0.617893, 3pt) -- +(0, -6pt) node[below, xshift=0pt]{%
      $\frac{j_{k,3}}{\alpha_{k,1}}$};
    \draw (0.841939, -3pt) -- +(0, 6pt) node[above, xshift=0pt]{%
      $\frac{j_{k,4}}{\alpha_{k,1}}$};

    \begin{scope}[xshift=18em]
      \draw[->] (-0.03, 0) -- (1.1, 0) node[below]{$r$};
      \draw[->] (0, -1.) -- (0, 1.35);
      \draw (1, 3pt) -- (1, -3pt) node[below]{$1$};
      \begin{scope}[color=darkred, thick]
        \draw[fill] (0.168909, 0.864800) circle(1pt);
\draw[fill] (0.387717, -0.474941) circle(1pt);
\draw[fill] (0.607818, -0.273197) circle(1pt);
\draw[fill] (0.828210, 0.441052) circle(1pt);
\draw plot coordinates { (0.000000, 0.164411) (0.010101, 0.168752) (0.020202, 0.181681) (0.030303, 0.202917) (0.040404, 0.232000) (0.050505, 0.268296) (0.060606, 0.311013) (0.070707, 0.359211) (0.080808, 0.411824) (0.090909, 0.467679) (0.101010, 0.525516) (0.111111, 0.584016) (0.121212, 0.641820) (0.131313, 0.697562) (0.141414, 0.749888) (0.151515, 0.797487) (0.161616, 0.839114) (0.171717, 0.873612) (0.181818, 0.899938) (0.191919, 0.917181) (0.202020, 0.924579) (0.212121, 0.921537) (0.222222, 0.907636) (0.232323, 0.882646) (0.242424, 0.846526) (0.252525, 0.799431) (0.262626, 0.741708) (0.272727, 0.673893) (0.282828, 0.596700) (0.292929, 0.511012) (0.303030, 0.417865) (0.313131, 0.318431) (0.323232, 0.214000) (0.333333, 0.105955) (0.343434, -0.004249) (0.353535, -0.115108) (0.363636, -0.225101) (0.373737, -0.332708) (0.383838, -0.436440) (0.393939, -0.534863) (0.404040, -0.626622) (0.414141, -0.710462) (0.424242, -0.785253) (0.434343, -0.850003) (0.444444, -0.903876) (0.454545, -0.946207) (0.464646, -0.976507) (0.474747, -0.994475) (0.484848, -1.000000) (0.494949, -0.993159) (0.505051, -0.974215) (0.515152, -0.943612) (0.525253, -0.901965) (0.535354, -0.850048) (0.545455, -0.788775) (0.555556, -0.719190) (0.565657, -0.642442) (0.575758, -0.559769) (0.585859, -0.472470) (0.595960, -0.381889) (0.606061, -0.289384) (0.616162, -0.196313) (0.626263, -0.104003) (0.636364, -0.013735) (0.646465, 0.073282) (0.656566, 0.155927) (0.666667, 0.233184) (0.676768, 0.304158) (0.686869, 0.368084) (0.696970, 0.424343) (0.707071, 0.472460) (0.717172, 0.512117) (0.727273, 0.543147) (0.737374, 0.565535) (0.747475, 0.579415) (0.757576, 0.585057) (0.767677, 0.582867) (0.777778, 0.573363) (0.787879, 0.557172) (0.797980, 0.535008) (0.808081, 0.507656) (0.818182, 0.475958) (0.828283, 0.440788) (0.838384, 0.403040) (0.848485, 0.363605) (0.858586, 0.323355) (0.868687, 0.283127) (0.878788, 0.243702) (0.888889, 0.205799) (0.898990, 0.170053) (0.909091, 0.137012) (0.919192, 0.107123) (0.929293, 0.080731) (0.939394, 0.058069) (0.949495, 0.039261) (0.959596, 0.024324) (0.969697, 0.013165) (0.979798, 0.005594) (0.989899, 0.001328) (1.000000, 0.000000) };
        \node at (0.1, 1.1) {$R_{0,2}$};
      \end{scope}
      \draw (0.168909, 3pt) -- +(0, -6pt) node[below]{%
        $\frac{j_{k,1}}{\alpha_{k,2}}$};
      \draw (0.387717, -3pt) -- +(0, 6pt) node[above, xshift=1pt]{%
        $\frac{j_{k,2}}{\alpha_{k,2}}$};
      \draw (0.607818, -3pt) -- +(0, 6pt) node[above, xshift=-2pt]{%
        $\frac{j_{k,3}}{\alpha_{k,2}}$};
      \draw (0.828210, 3pt) -- +(0, -6pt) node[below]{%
        $\frac{j_{k,4}}{\alpha_{k,2}}$};
    \end{scope}
  \end{tikzpicture}

  \caption{Graph of $R_{k,\ell}$ for a $\kappa \in
    \intervaloo{j_{k,3} j_{k,4}, j_{k,4} j_{k,5}}$ and $k=0$.}
  \label{fig:R0l kappa large}
\end{figure}

\begin{Lem}
  \label{Lem Ev1}
  Let $k\in \IN$, $n \in \IN^{*}$, $\kappa\in\intervaloo{j_{k,n-1}
    j_{k,n},\, j_{k,n} j_{k,n+1}}$, and $R_{k,1}$ be as in
  Theorem~\ref{1st eigen} with $c > 0$ and $d > 0$.
  Let $i\in \{0,\dotsc, n\}$ and
  $\frac{j_{k,i}}{\alpha_{k,1}} \le r_1 < r_2 \le
  \frac{j_{k,i+1}}{\alpha_{k,1}}$
  (with the convention that $j_{k,0} := 0$).
  \begin{itemize}
  \item If $i$ is odd and
    $R_{k,1}(r_1)\le 0$ and $R_{k,1}(r_2)\le 0$, then $R_{k,1}(r) <
    0$ on $\intervaloo{r_1,r_2}$.  Moreover, if $R_{k,1}(r_1) = 0$
    then $R_{k,1}'(r_1)<0$ and if $R_{k,1}(r_2) = 0$ then
    $R_{k,1}'(r_2)>0$.
  \item If $i$ is even and $R_{k,1}(r_1)\ge 0$ and $R_{k,1}(r_2) \ge 0$, then
    $R_{k,1}(r) > 0$ on $\intervaloo{r_1,r_2}$.  Moreover, if
    $R_{k,1}(r_1) = 0$ then $R_{k,1}'(r_1)>0$ and if
    $R_{k,1}(r_2) = 0$ then $R_{k,1}'(r_2)<0$.
  \end{itemize}
\end{Lem}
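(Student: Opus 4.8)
The plan is to reduce the statement, exactly as in the proof of Lemma~\ref{Rk1}, to a Helmholtz-type equation on an annular sector and then to run a maximum principle whose validity rests on the single inequality $\alpha_{k,-1} < \alpha_{k,1}$. Recall from~\eqref{eq R} that $R_{k,1}(r) = c\,J_k(\alpha_{k,1} r) + d\,J_k(\alpha_{k,-1} r)$ with $c,d > 0$ and, by Theorem~\ref{thm:alpha kl}, $\alpha_{k,-1} = \kappa/\alpha_{k,1} < \sqrt{\kappa} < \alpha_{k,1}$. Setting $u(x) := R_{k,1}(\abs{x})\sin(k\theta)$ when $k \ge 1$ (resp.\ $u := R_{0,1}(\abs{x})$ when $k = 0$) and using the Bessel equation~\eqref{A:6}, the computation already carried out in~\eqref{Laplacian R alpha11} shows that
\[ -\Delta u - \alpha_{k,-1}^2 u = f(r)\sin(k\theta), \qquad f(r) := c\,(\alpha_{k,1}^2 - \alpha_{k,-1}^2)\,J_k(\alpha_{k,1} r). \]

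Next I would fix the sign of this right-hand side on $I_i := \intervaloo{j_{k,i}/\alpha_{k,1},\, j_{k,i+1}/\alpha_{k,1}}$. For $r \in I_i$ one has $\alpha_{k,1} r \in \intervaloo{j_{k,i}, j_{k,i+1}}$, so $\alpha_{k,1} r$ lies strictly between two consecutive zeros of $J_k$; since $J_k > 0$ on $\intervaloo{0, j_{k,1}}$, the sign of $J_k$ there equals $(-1)^i$. Writing $\sigma := (-1)^i$ and recalling $\alpha_{k,1}^2 - \alpha_{k,-1}^2 > 0$ and $c > 0$, both $J_k(\alpha_{k,1} r)$ and $f$ have sign $\sigma$ on $I_i$ and vanish only at its endpoints.

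The heart of the proof is the maximum principle for $\mathcal{L} := -\Delta - \alpha_{k,-1}^2$ on the annular sector $A := \{x : j_{k,i}/\alpha_{k,1} < \abs{x} < j_{k,i+1}/\alpha_{k,1}\} \cap \{0 < \theta < \pi/k\}$ (an annulus when $k = 0$). The key observation is that $w(x) := \sigma\,J_k(\alpha_{k,1}\abs{x})\sin(k\theta)$ is strictly positive on $A$ and satisfies $\mathcal{L} w = (\alpha_{k,1}^2 - \alpha_{k,-1}^2)\,w > 0$; equivalently, $J_k(\alpha_{k,1}\abs{x})\sin(k\theta)$ is a sign-definite, hence principal, Dirichlet eigenfunction of $-\Delta$ on $A$ with eigenvalue $\alpha_{k,1}^2 > \alpha_{k,-1}^2$. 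Thus $w$ is a positive supersolution; its restriction to any subsector $A' \subseteq A$ is again one, so by the maximum principle (see~\cite{Walter} or~\cite[Theorem~2.8]{Du}) the strong maximum principle holds for $\mathcal{L}$ on every such $A'$. To conclude, given $r_1 < r_2$ in $\intervalcc{j_{k,i}/\alpha_{k,1},\, j_{k,i+1}/\alpha_{k,1}}$, let $A'$ be the corresponding subsector and set $v := \sigma u$. Then $\mathcal{L} v = \sigma f \sin(k\theta) \ge 0$ in $A'$, while $v \ge 0$ on $\partial A'$: it vanishes on the radial sides (where $\sin(k\theta) = 0$) and equals $\sigma R_{k,1}(r_j)\sin(k\theta) \ge 0$ on the two arcs, by the sign hypotheses on $R_{k,1}(r_1), R_{k,1}(r_2)$. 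As $v \not\equiv 0$, the strong maximum principle gives $v > 0$ in $A'$, i.e.\ $\sigma R_{k,1} > 0$ on $\intervaloo{r_1, r_2}$, which is the asserted strict sign for both parities of $i$. Whenever $R_{k,1}(r_1) = 0$ (resp.\ $R_{k,1}(r_2) = 0$), the arc $\abs{x} = r_1$ (resp.\ $\abs{x} = r_2$) carries a boundary minimum of $v$, so Hopf's boundary point lemma forces a strictly negative outward normal derivative; rewriting $\partial_\nu v < 0$ as $-\partial_r v(r_1) < 0$ and $\partial_r v(r_2) < 0$ and dividing by $\sigma$ yields exactly the stated signs of $R_{k,1}'(r_1)$ and $R_{k,1}'(r_2)$.

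The step I expect to demand the most care is the identification of the principal eigenvalue of $A$: one must verify that $J_k(\alpha_{k,1}\abs{x})\sin(k\theta)$ is genuinely sign-definite on $A$---this is precisely why the endpoints of $I_i$ are taken at \emph{consecutive} zeros of $J_k$---so that it is the ground state and the comparison $\alpha_{k,1}^2 > \alpha_{k,-1}^2$ is what powers the maximum principle. Everything else is sign bookkeeping through $\sigma = (-1)^i$ together with a routine use of Hopf's lemma, which applies despite the negative zeroth-order coefficient $-\alpha_{k,-1}^2$ because $v$ vanishes at the boundary point under consideration.
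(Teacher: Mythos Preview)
Your proof is correct and follows essentially the same approach as the paper: set up the Helmholtz equation~\eqref{Laplacian R alpha11} for $R_{k,1}(r)\sin(k\theta)$, recognize $J_k(\alpha_{k,1}r)\sin(k\theta)$ as the sign-definite (hence principal) Dirichlet eigenfunction of $-\Delta$ on the annular sector $A_i^+$ with eigenvalue $\alpha_{k,1}^2 > \alpha_{k,-1}^2$, and invoke the maximum principle from~\cite{Walter} or~\cite[Theorem~2.8]{Du}. You supply more detail than the paper does---the explicit sign bookkeeping via $\sigma=(-1)^i$, the separate treatment of $k=0$, and the Hopf argument for the derivative claims---but the underlying idea is identical.
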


\begin{Rem}
  Given the interval where $\kappa$ lies, Remark~\ref{Range of
    alpha_k1} asserts that $j_{k,n-1} < \alpha_{k,-1} < j_{k,n}
  < \alpha_{k,1} < j_{k,n+1}$.  Therefore $J_k(\alpha_{k,1}) \ne 0$
  and one can use the first equation of the degenerate
  system~\eqref{eq:R bd cond} to find a nontrivial solution $(c,d)$.
  Moreover, since $J_k(\alpha_{k,1})$ and $J_k(\alpha_{k,-1})$ have
  opposite signs, one can always choose $c > 0$ and $d > 0$.
\end{Rem}

\begin{proof}
  Observe that $J_k(\alpha_{k,1}r)\sin (k\theta)$ is a
  solution to
  \begin{equation*}
    \begin{cases}
      -\Delta \varphi = \alpha_{k,1}^2\,\varphi, &\text{in } A_i^+,
      \\[1\jot]
      \varphi=0,&\text{on }\partial A_i^+.
    \end{cases}
  \end{equation*}
  where $A_i^+ := \bigl\{ (r\cos(\theta), r\sin(\theta)) \in \IR^2 \bigm|
  \frac{j_{k,i}}{\alpha_{k,1}} < r < \frac{j_{k,i+1}}{\alpha_{k,1}}$
  and $0 < \theta < \frac{\pi}{k} \bigr\}$.
  Moreover, as $J_k(\alpha_{k,1}r)\sin(k\theta)$ does not change sign
  in $A_i^+$,
  it is the first eigenfunction of~$-\Delta$ in $A_i^+$.
  As $\alpha_{k,-1}^2 < \alpha_{k,1}^2$, the maximum principle is
  valid for \eqref{Laplacian R alpha11} with $A_k^+$ replaced by
  $A_i^+$ (see \cite{Walter} or \cite[Theorem~2.8]{Du}).
  The result then follows from the fact that, when $i$ is odd
  (resp.\ even), the right hand side of~\eqref{Laplacian R alpha11}
  is negative (resp.\ positive) on $\intervaloo{r_1,r_2}$.
\end{proof}

It remains to study the sign of $R_{k,1}$ for $r$ close to~$1$.

\begin{Lem}
  \label{Lem Ev2}
  Let $k$, $n$, $\kappa$, and $R_{k,1}$ be as in Lemma~\ref{Lem Ev1}.
  When $n$ is odd (resp.\ even) then, for all $r\in \bigintervaloo{
    \frac{j_{k,n}}{\alpha_{k,1}}, 1}$, $R_{k,1}(r) > 0$ (resp.\
  $R_{k,1}(r) < 0$).
\end{Lem}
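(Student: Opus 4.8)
The plan is to reproduce, for general $n$, the contradiction argument that handled the last interval in Lemma~\ref{Rk1} (which is exactly the case $n=1$), but with the crude comparison $\alpha_{k,-1}^2 \le j_{k,1}^2$ replaced by an eigenvalue comparison based on domain monotonicity. I treat $n$ odd (where the claim is $R_{k,1}>0$ on $\bigintervaloo{j_{k,n}/\alpha_{k,1},1}$); the case $n$ even is identical after flipping every sign.

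First I would fix the sign of $R_{k,1}$ at the left endpoint. Since $J_k(j_{k,n})=0$, one has $R_{k,1}(j_{k,n}/\alpha_{k,1}) = d\,J_k\bigl(h(\kappa)\,j_{k,n}\bigr)$ with $h(\kappa):=\kappa/\alpha_{k,1}^2(\kappa)$. Using that $h$ is increasing (Lemma~\ref{lserge25:02}), that $h(j_{k,i}j_{k,i+1})=j_{k,i}/j_{k,i+1}$ by Lemma~\ref{valalpha}, and that $h(\kappa)<1$ because $\alpha_{k,1}>\sqrt\kappa$, I get $h(\kappa)\,j_{k,i}\in\intervaloo{j_{k,i-1},j_{k,i}}$ for every $i\in\{1,\dots,n\}$; hence $J_k(h(\kappa)j_{k,n})$ has the sign of $J_k$ on $\intervaloo{j_{k,n-1},j_{k,n}}$, i.e.\ $(-1)^{n-1}$, and since $d>0$ this is positive for $n$ odd. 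Moreover, by Remark~\ref{Range of alpha_k1} together with \eqref{serge:25/02:1}--\eqref{serge:25/02:2}, the \emph{open} hypothesis $\kappa\in\intervaloo{j_{k,n-1}j_{k,n},\,j_{k,n}j_{k,n+1}}$ gives $\alpha_{k,1}\in\intervaloo{j_{k,n},j_{k,n+1}}$ and $\alpha_{k,-1}\in\intervaloo{j_{k,n-1},j_{k,n}}$ strictly, so $J_k(\alpha_{k,1})\ne0$ and $J_k(\alpha_{k,1}r)$ keeps the constant sign $(-1)^n$ on $\bigintervaloo{j_{k,n}/\alpha_{k,1},\,1}$ (using $\alpha_{k,1}<j_{k,n+1}$).

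Now suppose, for contradiction, $R_{k,1}(r^*)=0$ for some $r^*\in\bigintervaloo{j_{k,n}/\alpha_{k,1},1}$. As in Lemma~\ref{Rk1}, $v:=R_{k,1}\sin(k\theta)$ solves \eqref{Laplacian R alpha11} on the annular sector $A_k^+=\{(r\cos\theta,r\sin\theta)\mid r^*<r<1,\ 0<\theta<\pi/k\}$ (for $k=0$ one argues radially on $\{r^*<\abs{x}<1\}$ as in Lemma~\ref{R01}), with $v=0$ on $\partial A_k^+$ since $R_{k,1}(r^*)=0=R_{k,1}(1)$. For $n$ odd the right-hand side $c(\alpha_{k,1}^2-\alpha_{k,-1}^2)J_k(\alpha_{k,1}r)\sin(k\theta)$ is strictly negative. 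The key point is that the maximum principle below the first eigenvalue still applies: $A_k^+$ is contained in the sector $\bigl\{r\in\intervaloo{j_{k,n}/\alpha_{k,1},\,j_{k,n+1}/\alpha_{k,1}}\bigr\}$ whose first Dirichlet eigenvalue is exactly $\alpha_{k,1}^2$ (principal eigenfunction $J_k(\alpha_{k,1}r)\sin(k\theta)$, of constant sign there), so by domain monotonicity the first eigenvalue of $A_k^+$ is $\ge\alpha_{k,1}^2>\alpha_{k,-1}^2$; hence (see \cite{Walter} or \cite[Theorem~2.8]{Du}) $v\le0$, i.e.\ $R_{k,1}\le0$ on $\intervaloo{r^*,1}$. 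To finish, I evaluate the radial form of \eqref{Laplacian R alpha11} at $r=1$: the clamped conditions $R_{k,1}(1)=0=\partial_rR_{k,1}(1)$ give $\partial_r^2R_{k,1}(1)=-c(\alpha_{k,1}^2-\alpha_{k,-1}^2)J_k(\alpha_{k,1})>0$ for $n$ odd, so a Taylor expansion forces $R_{k,1}(r)>0$ just below $1$, contradicting $R_{k,1}\le0$ on $\intervaloo{r^*,1}$. Thus no such $r^*$ exists. For $n$ even all signs reverse: the source and $J_k(\alpha_{k,1})$ become positive, the maximum principle yields $R_{k,1}\ge0$ past any interior zero while $\partial_r^2R_{k,1}(1)<0$ forces $R_{k,1}<0$ near $1$, again a contradiction.

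The main obstacle is the apparent sign paradox. The source term and the maximum principle dictate the \emph{interior} sign $(-1)^n$ on $\intervaloo{j_{k,n}/\alpha_{k,1},\,j_{k,n+1}/\alpha_{k,1}}$, whereas the conclusion asserts the \emph{opposite} sign $(-1)^{n-1}$ on the truncated interval $\intervaloo{j_{k,n}/\alpha_{k,1},\,1}$. The resolution, and the delicate step, is that the clamped condition at $r=1$ cuts the interval short before the natural sign change occurs, the sign of $\partial_r^2R_{k,1}(1)$ being pinned down by $J_k(\alpha_{k,1})$; making this rigorous hinges on the eigenvalue comparison by domain monotonicity, which keeps $\alpha_{k,-1}^2$ below the first eigenvalue of the relevant annular sector even though, unlike in Lemma~\ref{Rk1}, it may now exceed $j_{k,1}^2$.
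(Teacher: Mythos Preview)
Your proof is correct and follows essentially the same route as the paper: compute $\partial_r^2 R_{k,1}(1)$ from the clamped conditions to pin down the sign of $R_{k,1}$ near $r=1$, then derive a contradiction from any interior zero via the maximum principle on the annular sector contained in $\{j_{k,n}/\alpha_{k,1}<r<j_{k,n+1}/\alpha_{k,1}\}$ (the paper packages this last step as an invocation of Lemma~\ref{Lem Ev1} with $i=n$, $r_2=1$, and expresses $\partial_r^2 R_{k,1}(1)$ as $d(\alpha_{k,1}^2-\alpha_{k,-1}^2)J_k(\alpha_{k,-1})$, which equals your $-c(\alpha_{k,1}^2-\alpha_{k,-1}^2)J_k(\alpha_{k,1})$ by the first equation of~\eqref{eq:R bd cond}). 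Your opening paragraph determining the sign of $R_{k,1}(j_{k,n}/\alpha_{k,1})$ is correct but not needed for the argument.
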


\begin{proof}
Observe that $R_{k,\ell}(1)=0$, $\partial_r R_{k,\ell}(1)=0$ and
using the easily checked identity
  \begin{equation*}
    \partial_r^2 R_{k,1}(r)
    = -\frac{1}{r} \partial_r R_{k,1}(r)
    + \Bigl(\frac{k^2}{r^2} - \alpha_{k,1}^2 \Bigr) R_{k,1}(r)
    + d \bigl(\alpha_{k,1}^2 - \alpha_{k,-1}^2 \bigr)
    J_k\bigl(\alpha_{k,-1} r\bigr),
  \end{equation*}
we get
  \begin{equation*}
    \partial_r^2 R_{k,1}(1)
    = d (\alpha_{k,1}^2 - \alpha_{k,-1}^2) \, J_k(\alpha_{k,-1}).
  \end{equation*}
By our choice of $d>0$, we see that
  $\partial_r^2 R_{k,1}(1)$ has the same sign as $J_k(\alpha_{k,-1})$.
  Since $\alpha_{k,-1}$ belongs to
  $\intervaloo{j_{k,n-1}, j_{k,n}}$, we deduce that, for $n-1$ even
  (resp.\ $n-1$ odd), $\partial_r^2 R_{k,1}(1)>0$
  (resp.\ $\partial_r^2 R_{k,1}(1) < 0$).  This implies the
  existence of $\epsilon > 0$ such that $R_{k,1} > 0$ (resp.\
  $R_{k,1} < 0$) on $\intervalco{1-\epsilon, 1}$.

  If $R_{k,1}$ has a root
  $r_1 \in \intervaloo{\frac{j_{k,n}}{\alpha_{k,1}}, 1}$, we have a
  contradiction with Lemma~\ref{Lem Ev1} applied with $i=n$ and
  $r_2=1$.
\end{proof}

\begin{Prop}
  \label{Prop Ev5}
  Let $k\in \IN$, $n \in \IN^{*}$ fixed and
  $\kappa\in\intervaloo{j_{k,n-1} j_{k,n},\, j_{k,n} j_{k,n+1}}$
  (i.e., $\alpha_{k,1}\in \intervaloo{j_{k,n}, j_{k,n+1}}$) with $n\in
  \IN$, $n\ge 2$, then $R_{k,1}$ has exactly $n-1$ simple zeros
  in~$\intervaloo{0,1}$.
\end{Prop}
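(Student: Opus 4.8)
The plan is to partition $\intervaloo{0,1}$ by the points $r_i := j_{k,i}/\alpha_{k,1}(\kappa)$, to read off the (alternating) signs of $R_{k,1}$ at these points, and then to use Lemmas~\ref{Lem Ev1} and~\ref{Lem Ev2} to pin down exactly one simple zero in each inner interval and none in the two boundary intervals.

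First I would fix the geometry and the signs. By~\eqref{serge:25/02:1} (applied with $\ell = 1$ and $n$ replaced by $n-1$), the hypothesis $\kappa \in \intervaloo{j_{k,n-1}j_{k,n},\, j_{k,n}j_{k,n+1}}$ yields $\alpha_{k,1}(\kappa) \in \intervaloo{j_{k,n}, j_{k,n+1}}$, so that, with the convention $j_{k,0} := 0$, the points $r_i := j_{k,i}/\alpha_{k,1}$ satisfy $0 = r_0 < r_1 < \dots < r_n < 1 < r_{n+1}$. As in the Remark preceding the proof of Lemma~\ref{Lem Ev1}, I normalize the nontrivial solution of~\eqref{eq:R bd cond} so that $c > 0$ and $d > 0$. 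Since $\alpha_{k,1} r_i = j_{k,i}$, we have $J_k(\alpha_{k,1} r_i) = 0$ and hence $R_{k,1}(r_i) = d\,J_k\bigl(j_{k,i}\,h(\kappa)\bigr)$, where $h(\kappa) := \kappa/\alpha_{k,1}^2(\kappa)$. Exactly as in the proof of Lemma~\ref{cserge25:02} (using that $h$ is increasing by Lemma~\ref{lserge25:02}, that $h(j_{k,i-1}j_{k,i}) = j_{k,i-1}/j_{k,i}$ by Lemma~\ref{valalpha}, and that $h(\kappa) < 1$ because $\alpha_{k,1} > \sqrt{\kappa}$), one obtains $j_{k,i-1} < j_{k,i}\,h(\kappa) < j_{k,i}$ for every $i \in \{1,\dots,n\}$. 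Thus $j_{k,i}\,h(\kappa)$ lies strictly between two consecutive zeros of $J_k$, whence $R_{k,1}(r_i) \ne 0$ and, more precisely, $R_{k,1}(r_i)$ has sign $(-1)^{i-1}$; in particular the values of $R_{k,1}$ at $r_1, \dots, r_n$ strictly alternate in sign.

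Next I would count zeros interval by interval. On $\intervaloo{0, r_1}$ (index $i = 0$, even), the endpoints satisfy $R_{k,1}(0) \ge 0$ and $R_{k,1}(r_1) > 0$, so Lemma~\ref{Lem Ev1} gives $R_{k,1} > 0$ there and hence no zero. On each $\intervaloo{r_i, r_{i+1}}$ with $1 \le i \le n-1$, the endpoint values have opposite signs, so there is at least one zero by the intermediate value theorem; to see there is exactly one and that it is simple, consider $i$ odd (the case $i$ even being symmetric). Then $R_{k,1}(r_i) > 0$ and $R_{k,1}(r_{i+1}) < 0$; letting $\rho$ be the smallest zero in $\intervaloo{r_i, r_{i+1}}$, continuity gives $R_{k,1} > 0$ on $\intervalco{r_i, \rho}$, and applying Lemma~\ref{Lem Ev1} on $\intervalcc{\rho, r_{i+1}}$ (both endpoint values being $\le 0$) yields $R_{k,1} < 0$ on $\intervaloo{\rho, r_{i+1}}$ together with $R_{k,1}'(\rho) < 0$. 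The negativity forbids a second zero and the derivative bound makes $\rho$ simple, so $\intervaloo{r_i, r_{i+1}}$ contains exactly one, simple, zero. Finally, on $\intervaloo{r_n, 1}$ Lemma~\ref{Lem Ev2} shows that $R_{k,1}$ keeps the constant sign $(-1)^{n-1}$, hence has no zero there. Since the $r_i$ themselves are not zeros, collecting these facts gives exactly $n-1$ simple zeros in $\intervaloo{0,1}$.

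The sign bookkeeping is routine; the delicate point is the uniqueness-and-simplicity step, where the full strength of Lemma~\ref{Lem Ev1} is needed: for any zero $\rho$ the lemma propagates the sign all the way to the next node $r_{i+1}$ (ruling out a further zero) and simultaneously forces $R_{k,1}'(\rho) \ne 0$. I would also dispatch the mild degeneracy at $k = 0$: there the sector $A_i^+$ used in Lemma~\ref{Lem Ev1} collapses, but $R_{0,1}$ is radial and the same maximum-principle comparison on the annulus $\{\, r_i < \abs{x} < r_{i+1}\,\}$, whose first eigenfunction is $J_0(\alpha_{0,1}\abs{x})$, applies verbatim, so the counting argument is unchanged.
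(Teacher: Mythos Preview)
Your proof is correct and follows essentially the same strategy as the paper: partition by the points $r_i = j_{k,i}/\alpha_{k,1}$, read off the alternating signs at these nodes via Lemma~\ref{cserge25:02}, and then invoke Lemmas~\ref{Lem Ev1} and~\ref{Lem Ev2} interval by interval. You actually supply more detail than the paper on two points it leaves implicit: the explicit uniqueness-and-simplicity argument (choosing the smallest zero $\rho$ and applying Lemma~\ref{Lem Ev1} on $[\rho, r_{i+1}]$ to force $R_{k,1}'(\rho)\ne 0$ and a fixed sign beyond $\rho$), and the observation that for $k=0$ the sector $A_i^+$ in Lemma~\ref{Lem Ev1} degenerates and must be replaced by the full annulus.
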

\begin{proof}
Recall that $R_{k,1}(r)=cJ_k(\alpha_{k,1} r)+ d J_k(\alpha_{k,-1} r)$,   with $c>0$ and $d>0$
  and hence $R_{k,1}>0$ on $\intervaloo{0,
    \frac{j_{k,1}}{\alpha_{k,1}}}$.  On the
  other hand by Lemma~\ref{Lem Ev2}, we know that $R_{k,1}(r)$ has no
  root on $\intervalco{\frac{j_{k,n}}{\alpha_{k,1}}, 1}$.

Moreover, 
by Lemma  \ref{cserge25:02}, we know that, for all $i\in \{1, \ldots, n-1\}$,
  \begin{equation*}
    R\Bigl(\frac{j_{k,i}}{\alpha_{k,1}}\Bigr)
    R\Bigl(\frac{j_{k,i+1}}{\alpha_{k,1}} \Bigr) < 0.
  \end{equation*}
Hence by
  Lemmas  \ref{Lem Ev1} and \ref{Lem Ev2} we deduce that $R_{k,1}$
  has exactly one root on $\intervaloo{\frac{j_{k,i}}{\alpha_{k,1}},
    \frac{j_{k,i+1}}{\alpha_{k,1}}}$
for all $i\in \{1, \ldots, n-1\}$. This proves the result.
\end{proof}

From Theorem~\ref{1st eigen} and Proposition~\ref{Prop Ev5},
it is easy to derive Theorem~\ref{1st eigen nodal}.

\section{Extension to any dimension}
\label{sec:any dim}

The buckling problem \eqref{eq:pbm} in the unit ball of $\IR^N$, with
$N\geq 3$, can be treated as before.  Indeed using spherical
coordinates, we first look for a solution $u$ to
$$
(\Delta+\alpha^2)u=0,
$$
with $\alpha\geq 0$, in the form
$$
u=R(r) S(\theta),
$$
where $S$ is a spherical harmonic function, that is the
restriction to the unit sphere of an harmonic homogeneous
polynomial of degree $k \in \IN$.  Expressing $\Delta$ is spherical
coordinates (see \cite[p.~38]{Muller:66} or \cite{Grumiau-Troestler:10}),
one finds that $R$ must satisfy
\begin{equation*}
  \partial_r^2 R + \frac{N-1}{r} \partial_r R
  + \Bigl(\alpha^2 - \frac{k(k+N-2)}{r^2}\Bigr) R=0.
\end{equation*}
Performing the change of unknown
$$
R(r):=r^{-\frac{N-2}{2}} B(r),
$$
one sees that $B$ satisfies the Bessel-like equation
\begin{equation*}
  \partial_r^2 B + \frac{1}{r} \partial_r B
  + \Bigl(\alpha^2 -\frac{\nu_k^2}{r^2}\Bigr) B = 0,
\end{equation*}
where $\nu_k^2 = k(k+N-2) + \bigl(\frac{N-2}{2}\bigr)^2 = \bigl(k +
\frac{N-2}{2}\bigr)^2$.
Hence, if $\alpha > 0$, $B$ is a linear combination of
$r \mapsto J_{\nu_k}(\alpha r)$ and $r \mapsto Y_{\nu_k}(\alpha r)$,
while if $\alpha=0$, $B$ is a linear combination of $r^{\nu_k}$ and
$r^{-\nu_k}$.  So the results of Lemma~\ref{lserge1} remain valid in
dimension~$N$ if one replaces $r \mapsto J_k(\alpha r)$ by
\begin{equation*}
  r \mapsto r^{-\frac{N-2}{2}} J_{\nu_k}(\alpha r),
  \qquad\text{with }
  \nu_k=k+\frac{N-2}{2},
\end{equation*}
$r J'_k(\alpha r)$ by $r^{1- \frac{N-2}{2}} J_{\nu_k}'(\alpha r)$,
and similarly for the other functions
(except for $r^{\pm k}$ which stay unchanged).

For the case $\kappa = 0$, it is easily found that the eigenvalues are
given by $j_{\nu_k+1, \ell}^2$, $k \in \IN$, $\ell \in \IN^{*}$.

For $\kappa > 0$, the boundary conditions yield the system
\begin{equation}
  \label{eq:bd cond ND}
  \begin{cases}
    c\, J_{\nu_k}(\alpha)
    + d\, J_{\nu_k}\bigl(\frac{\kappa}{\alpha} \bigr) = 0,
    \\[1\jot]
    c \,\alpha J'_{\nu_k}(\alpha )
    + d\, \frac{\kappa}{\alpha}
    J'_{\nu_k}\bigl(\frac{\kappa}{\alpha} \bigr) = 0.
  \end{cases}
\end{equation}
As this is nothing but~\eqref{eq:R bd cond} with $\abs{k}$ replaced by
$\nu_k$, for a nontrivial solution $(c,d)$ to exist,
$\alpha$ must be a root of $F_k$ defined as
in~\eqref{Dbis} also with $\abs{k}$ replaced by $\nu_k$.  Because
$\nu_{k+1} = \nu_k + 1$ and the proofs of the properties of roots
$\alpha_{k,\ell}$ of $F_k$ do not use the fact that $k$ is an integer,
they remain valid in this context (with $j_{k,n}$ replaced by
$j_{\nu_k, n}$).

The radial part $R_{k,\ell}$ of the eigenfunctions is now given by
\begin{equation}
  \label{eq:Rkl ND}
  R_{k,\ell}(r)
  = r^{-\frac{N-2}{2}}
  \left( c J_{\nu_k}(\alpha_{k,\ell} \, r)
  + d J_{\nu_k}\Bigl(\frac{\kappa}{\alpha_{k,\ell}}\, r\Bigr) \right),
\end{equation}
where $(c,d)$ is a non-trivial solution to the degenerate
system~\eqref{eq:bd cond ND} with $\alpha = \alpha_{k,\ell}$.
For Lemma~\ref{R01}, using \eqref{A:4} and $\nu_{k+1} = \nu_k + 1$,
one easily shows that
\begin{equation*}
  \partial_r R_{0,1}(r)
  = - r^{-\frac{N-2}{2}} \Bigl[ c \, \alpha_{0,1} J_{\nu_1}(\alpha_{0,1}\, r)
  + d \, \frac{\kappa}{\alpha_{0,1}}
  J_{\nu_1}\Bigl(\frac{\kappa}{\alpha_{0,1}}\, r\Bigr) \Bigr]
\end{equation*}
instead of~\eqref{eq:d R01}.  The rest of the proof adapts in an
obvious fashion.
The rest of the section does not use a special value for $k$ nor
depends on $k$ being an integer.  In conclusion, the following theorem
holds in any dimensions.

\begin{Thm}
  \label{1st eigen nodal ND}
  Denote $R_{k,\ell}$ a function defined by equation~\eqref{eq:Rkl ND}
  with $(c,d)$ a non-trivial solution of~\eqref{eq:bd cond ND} with
  $\alpha = \alpha_{k,\ell}$ where $\alpha_{k,\ell}$ the $\ell$-th
  positive root of
  \begin{math}
    F_k(\alpha) := 
    \frac{\kappa}{\alpha} J_{\nu_k}(\alpha)
    J'_{\nu_k} \bigl(\frac{\kappa}{\alpha} \bigr)
    - \alpha J_{\nu_k} \bigl(\frac{\kappa}{\alpha}\bigr)
    J'_{\nu_k}(\alpha)
  \end{math}
  greater than $\sqrt{\kappa}$.
  \begin{itemize}
  \item If $\kappa\in \intervalco{0,j_{\nu_0,1}j_{\nu_0,2}}$, the
    first eigenvalue is simple and is given by $\lambda_1(\kappa) =
    \alpha_{\nu_0,1}^2(\kappa) + \kappa^2 /
    \alpha_{\nu_0,1}^2(\kappa)$ and the eigenfunctions $\varphi_1$ are
    radial, one-signed and $\abs{\varphi_1}$ is decreasing with
    respect to $r$.

  \item If $\kappa\in \intervaloo{j_{\nu_1,n}j_{\nu_1,n+1}, \,
      j_{\nu_0,n+1}j_{\nu_0,n+2}}$, for some $n\ge 1$, the first
    eigenvalue is simple and given by $\lambda_1(\kappa) =
    \alpha_{\nu_0,1}^2(\kappa) + \kappa^2 /
    \alpha_{\nu_0,1}^2(\kappa)$ and the eigenfunctions are radial and
    have $n+1$ nodal regions.

  \item If $\kappa \in \intervaloo{j_{\nu_0,n+1}j_{\nu_0,n+2}, \,
      j_{\nu_1,n+1}j_{\nu_1,n+2}}$, for some $n\ge 0$, the first
    eigenvalue is given by $\lambda_1(\kappa) =
    \alpha_{\nu_1,1}^2(\kappa) + \kappa^2 /
    \alpha_{\nu_1,1}^2(\kappa)$ and the eigenfunctions $\varphi_1$
    have the form
    \begin{equation*}
      R_{1,1}(r) S\Bigl(\frac{x}{\abs{x}}\Bigr) ,
      \qquad
      S \text{ is a spherical harmonic of degree } 1.
    \end{equation*}
    Moreover the function $R_{1,1}$ has $n$ simple zeros
    in $\intervaloo{0,1}$,
    i.e., $\varphi_1$ has $2(n+1)$ nodal regions.
  \end{itemize}
\end{Thm}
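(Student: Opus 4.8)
The plan is to argue that the entire two-dimensional theory transfers to dimension~$N$ under the single substitution $\abs{k} \mapsto \nu_k = k + \frac{N-2}{2}$, so that Theorem~\ref{1st eigen nodal ND} is deduced from the $N$-dimensional analogues of Theorem~\ref{1st eigen} and Proposition~\ref{Prop Ev5} in exactly the way Theorem~\ref{1st eigen nodal} was deduced in the plane. The one structural fact that makes every step go through is the identity $\nu_{k+1} = \nu_k + 1$: wherever the planar proofs played off the two consecutive Bessel orders $\abs{k}$ and $\abs{k}+1$ (via the recurrences \eqref{A:1}, \eqref{A:3}, \eqref{A:4}, the equation \eqref{A:6}, and the interlacing of zeros), the identical manipulations remain valid for $\nu_k$ and $\nu_{k+1}$.

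First I would dispatch the spectral part. As set up in this section, separating variables with a spherical harmonic $S$ of degree~$k$ and putting $R(r) = r^{-(N-2)/2} B(r)$ turns the radial equation into the Bessel equation of order $\nu_k$; hence the eigenfunctions are $R_{k,\ell}(r)\, S(x/\abs{x})$ with $R_{k,\ell}$ given by \eqref{eq:Rkl ND}, and $\alpha_{k,\ell}$ is a positive root of $F_k$ assembled from $J_{\nu_k}$. Because the proofs of Lemma~\ref{cserge1} up to Theorem~\ref{1st eigen2} never use the integrality of~$k$, relying only on Bessel identities, the simplicity and interlacing of the zeros, and the relation $\nu_{k+1} = \nu_k + 1$, they hold verbatim after replacing every $j_{k,n}$ by $j_{\nu_k,n}$. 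In particular the $N$-dimensional Theorem~\ref{1st eigen} identifies $\lambda_1(\kappa)$ and the shape of the first eigenfunctions on the three families of $\kappa$-intervals appearing in the statement.

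The nodal part is handled next, and here the key observation is that $r^{-(N-2)/2} > 0$ on $\intervaloo{0,1}$, so it never affects a sign: the sign changes of $R_{k,\ell}$ coincide with those of the bracketed combination $c\, J_{\nu_k}(\alpha_{k,\ell} r) + d\, J_{\nu_k}(\tfrac{\kappa}{\alpha_{k,\ell}} r)$ in \eqref{eq:Rkl ND}. Consequently Lemma~\ref{cserge25:02}, Lemmas~\ref{Lem Ev1} and~\ref{Lem Ev2}, and Proposition~\ref{Prop Ev5} carry over once their maximum-principle inputs are recast in~$\IR^N$. This recasting is the only point requiring genuine care: one checks that $r^{-(N-2)/2} J_{\nu_k}(\alpha_{k,1} r)\, S(x/\abs{x})$ is a bona fide $-\Delta$-eigenfunction with eigenvalue $\alpha_{k,1}^2$ on the relevant $N$-dimensional annular region, that $j_{\nu_k,1}^2$ is still the pertinent first Dirichlet eigenvalue, and that this first eigenvalue decreases under domain inclusion; the cited comparison results (\cite{Walter}, \cite[Theorem~2.8]{Du}) then apply unchanged. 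For Lemma~\ref{R01} one additionally uses the derivative formula displayed above, namely that $\partial_r R_{0,1}$ is governed by $J_{\nu_1}$, which is immediate from \eqref{A:4} and $\nu_1 = \nu_0 + 1$.

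Finally I would assemble the pieces exactly as for Theorem~\ref{1st eigen nodal}. On the intervals where Theorem~\ref{1st eigen} gives a radial first eigenfunction $R_{0,1}$, Lemma~\ref{R01} (for $\kappa < j_{\nu_0,1} j_{\nu_0,2}$) yields the one-signed, radially decreasing profile, while Proposition~\ref{Prop Ev5} with $k=0$ counts the simple zeros of $R_{0,1}$ and hence the $n+1$ radial nodal shells. On the intervals where the first eigenfunction is of the form $R_{1,1}(r)\, S(x/\abs{x})$ with $S$ a spherical harmonic of degree one, Proposition~\ref{Prop Ev5} with $k=1$ gives the $n$ simple zeros of $R_{1,1}$; combined with the nodal hypersurface $\{S = 0\}$ of the degree-one harmonic, this produces $2(n+1)$ nodal regions. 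I expect the only real obstacle to be the verification of the maximum principle and the eigenvalue comparison on $N$-dimensional annular sectors, everything else being a faithful transcription of the planar arguments with $\abs{k}$ replaced by $\nu_k$.
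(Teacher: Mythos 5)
Your proposal is correct and follows essentially the same route as the paper: reduce to Bessel functions of order $\nu_k = k + \frac{N-2}{2}$ via the substitution $R(r) = r^{-(N-2)/2}B(r)$, observe that all spectral lemmas transfer verbatim because $\nu_{k+1} = \nu_k + 1$ and none of the proofs uses integrality of the order, note the $J_{\nu_1}$-form of $\partial_r R_{0,1}$ coming from \eqref{A:4}, and recast the maximum-principle/nodal arguments on $N$-dimensional annular regions before assembling the conclusion from the $N$-dimensional analogues of Theorem~\ref{1st eigen} and Proposition~\ref{Prop Ev5}. Your explicit remarks that the positive prefactor $r^{-(N-2)/2}$ cannot affect sign counts and that the Dirichlet eigenvalue comparison under domain inclusion must be rechecked in $\IR^N$ are points the paper leaves implicit, but they are refinements of, not departures from, its argument.
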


\appendix
\section{Appendix: Bessel functions}

As a convenience to the reader, we gather in this section various
properties of Bessel functions (see for instance~\cite{dlmf}) that are
used in this paper.

\subsection*{Recurrence Relations and Derivatives}

The Bessel functions  $J_{\nu}$ satisfies
\allowdisplaybreaks
\begin{gather}
  \label{A:1}
  {\nu} J_{\nu}(z)=\frac{z}{2} \bigl(J_{{\nu}-1}(z)+J_{{\nu}+1}(z)\bigr),
  \\[1\jot]
  \label{A:3}
  J_{\nu}'(z)=J_{{\nu}-1}(z)-\frac{{\nu}}{z}J_{{\nu}}(z),
  \\[1\jot]
  \label{A:4}
  J_{\nu}'(z)=-J_{{\nu}+1}(z)+\frac{{\nu}}{z}J_{{\nu}}(z),
  \\[1\jot]
  \label{A:5}
  J_0'(z)=-J_{1}(z),
  \\[1\jot]
  \label{A:6}
  z^2J_{\nu}''(z)+ zJ_{\nu}'(z)+(z^2-{\nu}^2) J_{\nu}(z)=0.
\end{gather}

\subsection*{Asymptotic behaviour}

When $\nu$ is fixed and $z\to\infty$ with
$\abs{\arg(z)} \le \pi-\delta$, we have
\begin{equation}
  \label{A:7}
  J_{\nu}(z) =\sqrt{\frac{2}{\pi z}}
  \Bigl( \cos\bigl(z-\frac{1}{2}\nu\pi-\frac{1}{4}\pi \bigr)
  + \exp(|\Im z|)o(1) \Bigr).
\end{equation}
For any given $\nu \ne -1,-2,-3,\dotsc$,
\begin{equation}
  \label{A:8}
  J_{\nu}(z) = \bigl(1 + o(1)\bigr)
  \Bigl(\frac12 z\Bigr)^{\nu} {\bigm/} \Gamma(\nu+1)
  \qquad\text{when } z \to 0.
\end{equation}

\subsection*{Zeros}

When $\nu\geq0$, the zeros of $J_{\nu}$ are simple
and interlace according to the inequalities
\begin{equation}
  \label{A:9}
  j_{\nu,1} <j_{\nu+1,1} <j_{\nu,2} <j_{\nu,2} <j_{\nu,3}	<\cdots
\end{equation}

\bigskip
\bibliographystyle{abbrv}
\bibliography{DNT}

\end{document}